\newtheorem{theorem}{Theorem}[section]
\newtheorem{lemma}[theorem]{Lemma}
\newtheorem{question}[theorem]{Question}
\newtheorem{proposition}[theorem]{Proposition}
\newtheorem{corollary}[theorem]{Corollary}
\theoremstyle{definition}
\newtheorem{definition}[theorem]{Definition}
\theoremstyle{remark}
\newtheorem{remark}[theorem]{Remark}
\newcommand{\C}{\mathcal C}
\newcommand{\U}{\mathcal U} 
\newcommand{\V}{\mathcal V}
\newcommand{\R}{\mathbb R}
\newcommand{\Z}{\mathbb Z}
\DeclareMathOperator{\conv}{conv}
\newcommand{\dc}[1]{\overline{#1}}
\DeclareMathOperator{\link}{Lk}
\DeclareMathOperator{\cstar}{St}
\DeclareMathOperator{\code}{code}
\DeclareMathOperator{\interior}{int}
\DeclareMathOperator{\dist}{dist}
\newcommand{\nerve}{\mathcal N}
\newcommand{\ol}[1]{\overline{#1}}
\newcommand{\od}{:=}
\begin{document}
\title{Convex union representability and convex codes}

\author{R. Amzi Jeffs}
\address{Department of Mathematics.  University of Washington, Seattle, WA 98195}
\email{rajeffs@uw.edu}

\author{Isabella Novik}
\address{Department of Mathematics.  University of Washington, Seattle, WA 98195}
\email{novik@uw.edu}

\begin{abstract} We introduce and investigate $d$-convex union representable complexes: the simplicial complexes that arise as the nerve of a finite collection of convex open sets in $\R^d$ whose union is also convex. Chen, Frick, and Shiu recently proved that such complexes are collapsible and asked if all collapsible complexes are convex union representable. We disprove this by showing that there exist shellable and collapsible complexes that are not convex union representable; there also exist non-evasive complexes that are not convex union representable. In the process we establish several necessary conditions for a complex to be convex union representable such as: that such a complex $\Delta$ collapses onto the star of any face of $\Delta$, that the Alexander dual of $\Delta$ must also be collapsible, and that if $k$ facets of $\Delta$ contain all free faces of $\Delta$, then $\Delta$ is $(k-1)$-representable. We also discuss some sufficient conditions for a complex to be convex union representable. The notion of convex union representability is intimately related to the study of convex neural codes. In particular, our results provide new families of examples of non-convex neural codes.
\end{abstract}

\thanks{Jeffs' research is partially supported by  graduate fellowship from NSF grant DMS-1664865. Novik's research is partially supported by NSF grant DMS-1664865 and by Robert R.~\& Elaine F.~Phelps Professorship in Mathematics}
\date{\today}
\maketitle

\section{Introduction}\label{sec:intro}

The goal of this paper is to initiate the study of convex union representable complexes --- a certain subfamily of representable complexes. An abstract simplicial complex is called $d$-representable if it is the nerve of a family of convex sets in $\R^d$.  The research on $d$-representable complexes has a rich and fascinating history starting with Helly's theorem (see for instance the survey articles \cite{eckhoff, tancer}  and the references therein), yet for $d>1$, the problem of characterizing $d$-representable complexes remains wide open.

We say that a simplicial complex is \emph{$d$-convex union representable} if it arises as the nerve of a finite collection of convex open sets in $\R^d$ whose union is also convex, and that a complex is \emph{convex union representable} if it is  $d$-convex union representable for some $d$. (It is worth stressing right away that convex union representability does not change if we replace the openness requirement with closedness, see Proposition \ref{prop:polytoperealization}.)

Our motivation for investigating such complexes  comes from the theory of convex neural codes --- a much younger subject, see for instance \cite{undecidability, chadvlad, local15, neuralring13, nogo, obstructions}. We defer precise definitions until later sections, and for now only mention that a convex neural code $\C$ is a combinatorial code that records the regions cut out  by a collection of convex open sets in some Euclidean space. The smallest simplicial complex that contains $\C$, denoted $\Delta(\C)$, is called the simplicial complex of $\C$. If $\C$ is a convex neural code and $\sigma$ is an element of  $\Delta(\C)$ but not of $\C$, then the link of $\sigma$ in $\Delta(\C)$ must be convex union representable, see Section \ref{sec:codes}. Thus, to shed light on the fundamental question of which (combinatorial) codes are convex, it is imperative to deepen our understanding of convex union representable complexes --- the task we begin in this paper. 

It follows from Borsuk's nerve lemma \cite[Theorem 10.6]{bjorner95} that every convex union representable complex is acyclic, and even contractible. In \cite[Section 5]{undecidability}, Chen, Frick, and Shiu used a technique of \cite{wegner} to show that, in fact, convex union representable complexes are collapsible. They also raised the question of whether all collapsible complexes are convex union representable. We disprove this, and in the process
establish several further necessary conditions for a complex to be convex union representable. Among our results are the following:

\begin{itemize}
\item A convex union representable complex collapses onto the star of \emph{any} of its faces; see Corollary \ref{cor:collapsetostar}.
\item The Alexander dual of a convex union representable complex is also collapsible; see Theorem \ref{thm:dualcollapsible}.
\item Convex union representable complexes are similar in spirit to constructible complexes; see  Theorem \ref{thm:splitting} for a precise statement.
\item  A convex union representable complex that has $k$ or fewer free faces is $(k-1)$-convex union representable. In particular, it is $(k-1)$-representable, and hence also $(k-1)$-Leray; see Theorem \ref{thm:(k-1)rep}.
\item For every $d\geq 2$, there exists a $d$-dimensional shellable and collapsible simplicial complex that is not convex union representable. Similarly, for  every $d\geq 2$, there exists a $d$-dimensional non-evasive complex that is not convex union representable; see  Corollary \ref{cor:nonevasivebutnotrepresentable} for both results. On the other hand,  a $1$-dimensional complex is convex union representable if and only if it is collapsible, which happens if and only if it is a tree; see Corollary \ref{cor:tree}.
\end{itemize}

The structure of the rest of the paper is as follows. In Section \ref{sec:preliminaries}, we set up basic notation as well as review some necessary background related to simplicial complexes, polytopes, nerves, and representability. In Section \ref{sec:collapsiblenotCUR} we discuss several properties that convex union representable complexes possess and use them to construct examples of collapsible complexes that are not convex union representable. Section \ref{sec:proofs} provides proofs of some auxiliary results; this is the most technical part of the paper. These results are then used in Sections \ref{sec:construtcable-like}--\ref{sec:fewfreefaces} to establish several further interesting properties that convex union representable complexes satisfy such as constructible-like behavior, collapsibility of the Alexander dual, etc. These properties in turn lead to additional examples of collapsible complexes that are not convex union representable. Section \ref{sec:constructions} discusses a few sufficient conditions for a complex to be convex union representable. In Section \ref{sec:codes} we describe how our results apply to the theory of convex neural codes. We conclude in Section \ref{sec:conclusion} with some open questions. 
 
\section{Preliminaries} \label{sec:preliminaries}
\subsection{Simplicial complexes}
We begin with several basic definitions pertaining to abstract simplicial complexes. From here on out we omit ``abstract" for brevity. For all undefined terminology we refer the readers to \cite{bjorner95}. A \emph{simplicial complex} $\Delta$ on a finite ground set $\V$ is a collection of subsets of $\V$ that is closed under inclusion.  The elements of $\Delta$ are called \emph{faces}, the inclusion-maximal faces are called \emph{facets}, and $1$-element faces are \emph{vertices}. (An element of the ground set may not form a vertex.) The \emph{dimension} of a face $\sigma\in\Delta$ is the cardinality of $\sigma$ minus one, and the dimension of $\Delta$ is the maximum dimension of its faces.  If $\sigma$ is any subset of $\V$, then the simplex on $\sigma$ is the collection $\dc{\sigma}\od 2^\sigma$ (i.e., the down-closure of $\sigma$).

We regard the empty set as a face. Also, following standard combinatorial conventions, see for instance \cite[p.~20]{St96}, we distinguish between the \emph{void} simplicial complex $\emptyset$ that has no faces, and the \emph{empty} simplicial complex $\{\emptyset\}$ that has a single empty face. One reason for such a distinction is that the void complex is acyclic, i.e., all its reduced homology vanishes, while the empty complex is not: it has a non-zero $\tilde{H}_{-1}$. (In the same vein, the Alexander dual of a nonempty simplex $\dc{\sigma}$ is $\emptyset$, while the Alexander dual of the boundary of $\dc{\sigma}$ is $\{\emptyset\}$.)

A simplicial complex $\Delta$ gives rise to several new simplicial complexes. The \emph{restriction} of $\Delta$ to $\omega\subseteq \V$ (or the \emph{induced subcomplex} of $\Delta$ on $\omega$) is $\Delta|_\omega:=\{\sigma\in\Delta \mid \sigma\subseteq \omega\}$. 
If $\sigma\in \Delta$, then the \emph{star} and the \emph{link} of $\sigma$ in $\Delta$ are the following subcomplexes of $\Delta$:
\[ 
\cstar_\Delta(\sigma)=\cstar(\sigma):=\{\tau\in\Delta \mid \tau\cup\sigma\in \Delta\}, \quad 
\link_\Delta(\sigma)=\link(\sigma):=\{\tau\in\cstar_\Delta(\sigma)\mid \tau\cap\sigma =\emptyset\}.
\]
Similarly, the \emph{contrastar} of $\sigma$ in $\Delta$ (also known as the \emph{deletion} of $\sigma$ from $\Delta$) is the subcomplex $\Delta\setminus\sigma:=\{\tau\in\Delta \mid \tau\not\supseteq\sigma\}$. If $v$ is a vertex of $\Delta$, it is customary to write $v\in \Delta$, $\cstar(v)$, $\link(v)$, and $\Delta\setminus v$ instead of $\{v\}\in\Delta$, $\cstar(\{v\})$, $\link(\{v\})$, and $\Delta\setminus \{v\}$, respectively. 

The join of two simplicial complexes $\Delta_1$ and $\Delta_2$ with disjoint vertex sets is the complex
\[
\Delta_1\ast\Delta_2:=\{\sigma_1\cup\sigma_2\mid \sigma_1\in \Delta_1, \sigma_2\in \Delta_2\}.
\]
Note that if $\Delta$ is a simplicial complex and $\sigma\in\Delta$, then $\cstar_\Delta(\sigma)=\dc{\sigma} \ast \link_\Delta(\sigma)$. 

Of a particular interest are the \emph{cone} and the \emph{suspension} of $\Delta$.  The cone over $\Delta$ with apex $v$ is the join of $\Delta$ with a $0$-dimensional simplex $\dc{v}$, where $v$ is not a vertex of $\Delta$; it is customary to denote this complex by $v\ast\Delta$ instead of  $\dc{v}\ast\Delta$. The suspension $\Sigma\Delta$ of $\Delta$ is the join of $\Delta$ with the zero-dimensional sphere. The two vertices of that sphere are called the \emph{suspension vertices}.

A pair of simplicial complexes $\Delta\supseteq\Gamma$ gives rise to a \emph{relative simplicial complex} denoted $(\Delta,\Gamma)$ and defined as the set-theoretic difference of $\Delta$ and $\Gamma$. In other words, \emph{faces} of $(\Delta,\Gamma)$ are faces of $\Delta$ that are not faces of $\Gamma$.  If $\emptyset \neq \Gamma\subsetneq \Delta$, then $(\Delta,\Gamma)$ is \textbf{not} a simplicial complex. 

The following notion is due to Whitehead. Assume $\Delta$ is a simplicial complex, and $\sigma$ is a face of $\Delta$ that is not a facet. If $\sigma$ is contained in a \emph{unique} facet of $\Delta$, then we say that $\sigma$ is a \emph{free face} of $\Delta$. In such a case, the operation $\Delta\rightarrow \Delta\setminus\sigma$ is called an \emph{elementary collapse} of $\Delta$ induced by $\sigma$.  A sequence of elementary collapses starting with $\Delta$ and ending with $\Gamma$ is a \emph{collapse} of $\Delta$ onto $\Gamma$. 

We say that $\Delta$ is \emph{collapsible} if it collapses to  the void complex $\emptyset$. Observe that if $\Delta$ is a simplicial complex with at least one vertex, then the empty face of $\Delta$ is a free face if and only if $\Delta$ is a simplex. In particular, a complex with at least one vertex is collapsible if and only if it collapses to a nonempty simplex which happens if and only if it collapses to a single vertex.  It is known that all nonempty collapsible complexes are contractible. 

A related notion, introduced in \cite{topologicalevasiveness} (see also \cite[Section 11]{bjorner95}), is that of a \emph{non-evasive complex}. It is worth mentioning that any cone is a non-evasive complex, and that any non-evasive complex  is collapsible.

\subsection{Polytopes}
A \emph{polytope} $P$ in $\R^d$ is the convex hull of finitely many (possibly zero) points in $\R^d$. Equivalently, a polytope is a compact subset of $\R^d$ that can be written as the intersection of finitely many closed half-spaces. The \emph{dimension} of $P$ is defined as the dimension of the affine span of $P$. The interior of a polytope (as well as of any subset) $P\subseteq \R^d$ is denoted by $\interior(P)$. A (proper) face of a $d$-dimensional polytope $P\subset \R^d$ is the intersection of $P$ with a supporting hyperplane in $\R^d$. 
We say that $P$ is a \emph{simplicial polytope} if all proper faces of $P$ are geometric simplices (i.e., convex hulls of affinely independent points). In this case, there is an associated simplicial complex --- the boundary complex of $P$, denoted $\partial P$. The facets of $\partial P$ are the vertex sets of the maximal faces of $P$ and the dimension of $\partial P$ is $\dim P-1$.  We refer the readers to Ziegler's book \cite{ziegler-polytopesbook} for an excellent introduction to the theory of polytopes.

\subsection{Minkowski sums and distances}
For two subsets $X,Y$ of $\R^d$, their \emph{Minkowski sum} is $X+Y:= \{x+y \mid x\in X, \ y\in Y\}$. (Note that $\emptyset + Y =\emptyset$.) 
We will often consider the Minkowski sum of $X$ and the open ball $B_\varepsilon$ of radius $\varepsilon$ centered at the origin.

For $\emptyset\neq X, Y\subseteq\R^d$, the \emph{distance} between $X$ and $Y$ is defined as $\inf\{\|x-y\| \mid x\in X, y\in Y\}$. Note that the distance between $X$ and $Y$ is at least $\varepsilon$ if and only if $X+B_\varepsilon$ is disjoint from $Y$. In Lemma \ref{lem:shrink} (and only there), we will also consider a very different notion --- the \emph{Hausdorff distance} $\dist_H(X,Y)$ between bounded subsets $X$ and $Y$of $\R^d$; it is defined as 
\[\dist_H(X,Y):=\inf\{\varepsilon\geq 0 \mid X\subseteq Y+B_\varepsilon \mbox{ and } Y\subseteq X+B_\varepsilon\}.
\] 
The Hausdorff distance makes the space of compact subsets of $\R^d$ into a metric space. 

\subsection{Nerves and representability}
Let $[n]$ denote the set of integers $\{1,2,\ldots, n\}$.
We will often work with collections $\{U_1,\ldots, U_n\}$ of convex subsets of $\R^d$. (The empty set is considered convex; it is open and closed.) When working with such a collection we let $U_\sigma$ denote $\bigcap_{i\in\sigma} U_i$ for all $\sigma\subseteq [n]$, with the convention that $U_\emptyset=\conv(\bigcup_{i=1}^n U_i)$. 
The \emph{nerve} of a collection of sets $\U=\{U_1,\ldots, U_n\}$ is the simplicial complex 
\[
\nerve(\U) \od \{\sigma\subseteq [n]\mid U_\sigma \neq \emptyset\}. 
\]
In particular, if all $U_i$ are empty sets, then $\nerve(\U)=\emptyset$. As a result, $\nerve(\U)$ is never $\{\emptyset\}$.

A simplicial complex $\Delta\subseteq 2^{[n]}$ is called \emph{$d$-representable} if there exists a collection of convex sets $\U=\{U_1,\ldots, U_n\}$ in $\R^d$ such that $\Delta = \nerve(\U)$. The class of $d$-representable complexes is well-studied, see for instance \cite{tancer} and \cite{wegner}. In particular, it is known that (i) every finite simplicial complex is $d$-representable for $d$ large enough, and that (ii) every $d$-representable complex is $d$-Leray.

In this paper we initiate the study of the following subfamily of $d$-representable complexes.

\begin{definition}\label{def:convexunionrepresentable}
Let $\Delta\subseteq 2^{[n]}$ be a simplicial complex. We say that $\Delta$ is \emph{$d$-convex union representable} if there is a collection of convex \emph{open} sets $\U=\{U_1,\ldots, U_n\}$ in $\R^d$ such that \begin{itemize}
\item[(i)] $\bigcup_{i\in[n]} U_i$ is a convex open set, and 
\item[(ii)] $\Delta = \nerve(\U)$. 
\end{itemize}
We say that $\Delta$ is \emph{convex union representable} if there exists some $d$ such that $\Delta$ is $d$-convex union representable. The collection $\{U_i\}_{i=1}^n$ is called a \emph{$d$-convex union representation} of $\Delta$. 
\end{definition}

\section{Collapsible complexes that are not convex union representable} \label{sec:collapsiblenotCUR}
In this section we discuss some of the properties that convex union representable complexes satisfy. We then use these properties to show the existence of collapsible complexes that are not convex union representable. Our starting point is \cite[Lemma 5.9]{undecidability} asserting 
that convex union representable complexes are collapsible. Here we further strengthen this result. Our main tool in doing so is the following theorem, whose proof is a consequence of the methods used in \cite{undecidability} and \cite{wegner}. 

\begin{theorem}\label{thm:halfspace}
Let $\{U_i\}_{i=1}^n$ be a $d$-convex union representation of a simplicial complex $\Delta$, and let $H\subseteq \R^d$ be a closed or open halfspace of $\R^d$. Then $\Delta$ collapses onto $\nerve\big(\{U_i\cap H\}_{i=1}^n\big)$. 
\end{theorem}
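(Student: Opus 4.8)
The plan is to realize $\nerve(\{U_i\cap H\})$ as the last term of a filtration of $\Delta$ obtained by sweeping a hyperplane across the representation, and to check that every step of that filtration is a collapse.

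I would begin with routine reductions, each preserving $\Delta$ and the convexity and openness of $\bigcup_i U_i$. Since each $U_i$ is open, $U_\sigma\cap H'\neq\emptyset$ iff $U_\sigma\cap\interior(H')\neq\emptyset$ for any halfspace $H'$, so we may take $H$ to be an open halfspace. Intersecting all the $U_i$ with a large enough ball makes them bounded, and then a standard approximation from inside (using Lemma \ref{lem:shrink}) lets us further assume that the $U_i$ are interiors of polytopes whose closures are pairwise ``non-faces-separated'', i.e.\ $\overline{U_\sigma}\cap\overline{U_i}=\emptyset$ whenever $\sigma\in\Delta$, $i\notin\sigma$ and $\sigma\cup\{i\}\notin\Delta$. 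Fix coordinates with $H=\{x\in\R^d\mid x_d<0\}$, write $H_t=\{x\mid x_d<t\}$, and for $\sigma\in\Delta$ let $h(\sigma):=\min\{x_d\mid x\in\overline{U_\sigma}\}$ (a genuine minimum, as $\overline{U_\sigma}$ is a polytope; recall $U_\emptyset=\conv(\bigcup_i U_i)$). Then $h$ is monotone under inclusion and, since $U_\sigma$ is open, $U_\sigma\cap H_t\neq\emptyset$ iff $h(\sigma)<t$; hence $\nerve(\{U_i\cap H_t\})=\{\sigma\in\Delta\mid h(\sigma)<t\}$ is a subcomplex of $\Delta$, equal to $\Delta$ for $t$ large and equal to $\nerve(\{U_i\cap H\})$ at $t=0$.

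Let $c_1>\dots>c_m\ge 0$ be the distinct values $h$ takes on $\Delta$ that lie in $[0,\infty)$, and put $\Gamma_0:=\Delta$ and $\Gamma_k:=\{\sigma\in\Delta\mid h(\sigma)<c_k\}$. Then $\Delta=\Gamma_0\supseteq\Gamma_1\supseteq\dots\supseteq\Gamma_m=\nerve(\{U_i\cap H\})$ and $\Gamma_{k-1}\setminus\Gamma_k=\{\sigma\in\Delta\mid h(\sigma)=c_k\}$, so it is enough to prove $\Gamma_{k-1}\searrow\Gamma_k$ for each $k$. For small $\varepsilon>0$ the family $\{U_i\cap H_{c_k+\varepsilon}\}$ is again a $d$-convex union representation --- this time of $\Gamma_{k-1}$ --- which inherits all the properties arranged above, and $\Gamma_k=\nerve(\{(U_i\cap H_{c_k+\varepsilon})\cap H_{c_k}\})$; so $\Gamma_{k-1}\searrow\Gamma_k$ is itself an instance of the theorem. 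Thus, after translating $c_k$ to $0$, it suffices to prove the single-level claim: \emph{if $\{U_i\}$ is a $d$-convex union representation in the above normal form with $h(\sigma)\le 0$ for every $\sigma\in\Delta$, then $\Delta$ collapses onto the subcomplex $\{\sigma\in\Delta\mid h(\sigma)<0\}=\nerve(\{U_i\cap H\})$; equivalently, $\Delta$ collapses away the up-set $D:=\{\sigma\in\Delta\mid h(\sigma)=0\}$.}

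The single-level claim is the heart of the proof and the step I expect to be the main obstacle. For $\sigma\in D$ the polytope $\overline{U_\sigma}=\bigcap_{i\in\sigma}\overline{U_i}$ meets the hyperplane $L:=\{x_d=0\}$ exactly in its nonempty bottom face; after fixing a linear functional $\ell$ on $L$ that is generic with respect to these finitely many faces, let $p_\sigma$ be the (then unique) $\ell$-minimal vertex of $\overline{U_\sigma}\cap L$. The normal-form hypotheses yield the compatibility one wants: $p_\sigma\in\overline{U_i}$ forces $\sigma\cup\{i\}\in D$, and for $\sigma\subseteq\tau$ in $D$ one has $p_\tau\in\overline{U_\sigma}\cap L$, $\ell(p_\tau)\ge\ell(p_\sigma)$, with equality precisely when $p_\tau=p_\sigma$. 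These are exactly the ingredients for an element-style discrete Morse matching on $D$ --- pairing each $\sigma\in D$ with $\sigma\cup\{v\}$ or $\sigma\setminus\{v\}$ for a vertex $v$ prescribed by $p_\sigma$ and $\ell$ --- that has no critical cells; equivalently, one peels the bottom faces meeting $L$ off $\Delta$ by a secondary hyperplane sweep carried out within $L\cong\R^{d-1}$, lowering the dimension and recursing. I expect the technical work to be precisely the verification that such a matching is well defined on all of $D$, exhausts $D$, and is acyclic --- that is, that the peeling is a bona fide sequence of elementary collapses of $\Delta$ onto $\nerve(\{U_i\cap H\})$ --- and it is here that the arguments of \cite{wegner} and \cite[Section~5]{undecidability} are adapted. (When $D=\Delta$, i.e.\ $U_i\cap H=\emptyset$ for all $i$, the single-level claim specializes to the collapsibility of $\Delta$, recovering \cite[Lemma~5.9]{undecidability}.)
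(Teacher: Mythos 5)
Your reductions (to an open halfspace, bounded sets, the polytopal normal form from Lemma \ref{lem:shrink}, and the filtration by sweep levels) are sound and reasonably close in spirit to the paper's setup, but the argument stops exactly where the theorem's content begins: the ``single-level claim'' is deferred with the remark that verifying the matching ``is the technical work,'' and the ingredients you list for that matching are demonstrably insufficient. Both compatibility facts you extract from the normal form --- that $p_\sigma\in\overline{U_i}$ forces $\sigma\cup\{i\}\in D$, and that $\ell(p_\tau)\ge\ell(p_\sigma)$ for $\sigma\subseteq\tau$ --- hold for an \emph{arbitrary} collection of convex open sets and never use the convexity of $\bigcup_i U_i$. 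Yet the single-level claim is false without that hypothesis: realize the boundary complex of a triangle by three convex open sets in $\R^2$ with pairwise but no triple intersection, and choose $H$ so that every $U_\sigma$ crosses into $H$ except $U_{\{2,3\}}$, whose closure is tangent to $\partial H$. Then $D=\{\{2,3\}\}$ is a single facet; it admits no perfect matching (odd cardinality), and removing a lone facet is never a collapse. So any proof of the single-level claim must invoke union-convexity somewhere, and yours does not. Relatedly, the fiber $\{\sigma\in D \mid p_\sigma=p\}$ is only an up-set of the Boolean lattice on $\{i\mid p\in\overline{U_i}\}$, not an interval: it can have several minimal elements or odd cardinality, so no single vertex $v$ prescribed by $p$ and $\ell$ can pair it off.

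The paper supplies the missing mechanism. Union-convexity enters through Borsuk's nerve lemma: for any halfspace $A^+$ the collection $\{V_i\cap A^+\}$ is again a convex union representation, so its nerve $\Gamma$ is acyclic; a Mayer--Vietoris argument (Lemma \ref{lem:twofaces}) shows that the relative complex of two nested acyclic nerves cannot consist of a single face. Applying this with $A_0$ the hyperplane supporting the \emph{unique} (after a genericity perturbation making the distances $d_\sigma$ distinct) region $V_{\sigma_0}$ farthest from $H$ produces a free face $\tau_0\subsetneq\sigma_0$. A further geometric modification --- replacing $V_i$ by $(V_i+B_\delta)\cap\bigcup_j V_j$ for $i\notin\tau_0$ before cutting with $A_0^+$ --- is what guarantees that the cut removes exactly the interval $[\tau_0,\sigma_0]$ and no other faces, i.e., realizes the elementary collapse $\Delta\to\Delta\setminus\tau_0$; induction on the number of faces then finishes. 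Locating the free face via acyclicity, and the fattening needed so that only the faces above $\tau_0$ are lost, are the heart of the proof, and they are precisely what your proposal leaves open.
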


Note that if $H$ is disjoint from all $U_i$, then $\nerve\big(\{U_i\cap H\}_{i=1}^n\big)=\emptyset$. In this case Theorem \ref{thm:halfspace} simply says that $\Delta$ is collapsible, and hence recovers \cite[Lemma 5.9]{undecidability}.

From Theorem \ref{thm:halfspace}, whose proof we postpone until the next section, a number of remarkable results (including the promised examples of collapsible but not convex union representable complexes) follow quickly. Below we summarize several.

\begin{corollary}\label{cor:collapsetoconvexset}
Let $\{U_i\}_{i=1}^n$ be a $d$-convex union representation of a simplicial complex $\Delta$, and let $C\subseteq \R^d$ be a convex set. Then $\Delta$ collapses onto $\nerve\big(\{U_i\cap C\}_{i=1}^n\big)$. 
\end{corollary}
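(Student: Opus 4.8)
Here is how I would go about it.

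The plan is to reduce $C$ to an easy shape and then iterate Theorem~\ref{thm:halfspace}. The reduction rests on two elementary observations, both exploiting that the $U_i$ are open. First, for any set $S$ and any face $\sigma$, $U_\sigma=\bigcap_{i\in\sigma}U_i$ meets $\overline S$ if and only if it meets $S$ (a small ball around a point of $U_\sigma\cap\overline S$ lies in $U_\sigma$ and meets $S$); taking $S=\interior C$ for a full-dimensional convex $C$, for which $\overline{\interior C}=C$, this gives $\nerve(\{U_i\cap C\})=\nerve(\{U_i\cap\interior C\})$. Second, if we exhaust a nonempty convex open set from inside by full-dimensional polytopes $K_1\subseteq K_2\subseteq\cdots$ (always possible), then $\nerve(\{U_i\cap K_m\})$ eventually equals $\nerve(\{U_i\cap\bigcup_m K_m\})$, because the nerve has finitely many faces and each is witnessed by a single point. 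Hence, when $C$ is full-dimensional it is enough to treat the case that $C$ is a full-dimensional polytope.

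So let $C=H_1\cap\cdots\cap H_m$ be a full-dimensional polytope, an intersection of closed halfspaces. I would apply Theorem~\ref{thm:halfspace} once per $H_j$. The only subtlety is that intersecting with a closed halfspace destroys openness; but for any open convex $Y\subseteq\R^d$ and closed halfspace $H$, $Y\cap H\neq\emptyset$ if and only if $Y\cap\interior H\neq\emptyset$ (slide a point of $Y\cap H$ slightly along the inner normal of $H$, staying in the open set $Y$), so $\nerve(\{Y_i\cap H\})=\nerve(\{Y_i\cap\interior H\})$ for open convex $Y_i$, and $\{U_i\cap\interior H_1\cap\cdots\cap\interior H_k\}$ is again a $d$-convex union representation. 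Applying Theorem~\ref{thm:halfspace} $m$ times and concatenating the resulting collapses, $\Delta$ collapses onto $\nerve(\{U_i\cap\interior H_1\cap\cdots\cap\interior H_m\})$; since the intersection of the open halfspaces is exactly $\interior C$ and $\interior C$ is dense in $C$, this complex equals $\nerve(\{U_i\cap C\})$, as desired.

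When $C$ is not full-dimensional, let $A$ be its affine hull. The sets $U_i\cap A$ are relatively open and convex in $A$, their union is convex and relatively open in $A$, and $\nerve(\{(U_i\cap A)\cap C\})=\nerve(\{U_i\cap C\})$, with $C$ full-dimensional inside $A$; so the case already handled, applied inside $A\cong\R^{\dim A}$, shows $\nerve(\{U_i\cap A\})$ collapses onto $\nerve(\{U_i\cap C\})$. It therefore suffices to prove that $\Delta$ collapses onto $\nerve(\{U_i\cap A\})$ for every affine subspace $A$. Writing $A$ as an intersection of $k$ hyperplanes and inducting on $k$ (the case $k=0$ being trivial, and $k\geq1$ reducing via the $k=1$ case in $\R^d$ followed by the inductive hypothesis applied to the convex union representation $\{U_i\cap L_1\}$ inside $L_1$), this boils down to the single case where $A=L$ is a hyperplane.

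The hyperplane case is where I expect the real difficulty. Write $L=\{f=0\}$. Theorem~\ref{thm:halfspace} gives that $\Delta$ collapses onto each of $\Delta^{+}:=\nerve(\{U_i\cap\{f\geq0\}\})$ and $\Delta^{-}:=\nerve(\{U_i\cap\{f\leq0\}\})$, and one checks that $\nerve(\{U_i\cap L\})=\Delta^{+}\cap\Delta^{-}$ (a face $\sigma$ lies in $\nerve(\{U_i\cap L\})$ exactly when $f$ takes both signs on $U_\sigma$). One cannot get $\Delta^{+}\cap\Delta^{-}$ simply by intersecting with $\{f\geq0\}$ and then $\{f\leq0\}$ and opening up the halfspaces, since $\{f>0\}\cap\{f<0\}=\emptyset$; the obstruction is the faces $\sigma$ for which $U_\sigma$ ``kisses'' $L$ from one side, i.e. $\overline{U_\sigma}\cap L\neq\emptyset$ but $U_\sigma\cap L=\emptyset$. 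My plan is to first replace $\{U_i\}$ by a $d$-convex union representation of $\Delta$ with the same $L$-slice nerve in which no $U_\sigma$ kisses $L$ --- so each $U_\sigma$ is either bounded away from $L$ or genuinely crosses it. For such a representation $\nerve(\{U_i\cap L\})=\nerve(\{U_i\cap\{|f|<\varepsilon\}\})$ for all small $\varepsilon>0$, and $\Delta$ collapses onto the latter by intersecting with $\{f<\varepsilon\}$ and then $\{f>-\varepsilon\}$ as in the second paragraph. Building such a representation --- a small shrinking of the $U_i$ preserving the nerve, the convexity of the union, and the crossing pattern along $L$ --- is the crux, and is where a shrinking result of the type of Lemma~\ref{lem:shrink} is needed; alternatively one reproves this case directly by re-running the hyperplane-sweep underlying Theorem~\ref{thm:halfspace}, now sweeping toward $L$ from both sides at once.
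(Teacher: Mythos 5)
Your treatment of the full\mbox{-}dimensional case is correct, and the halfspace\mbox{-}peeling step is the same mechanism the paper uses: write the polytope as an intersection of closed halfspaces, apply Theorem~\ref{thm:halfspace} once per halfspace, and pass to the interior of each halfspace after each application so that the intermediate collections are again convex union representations (legitimate because all sets in play are open). What you are missing is the paper's one\mbox{-}line reduction that makes the exhaustion argument unnecessary: for each $\sigma$ with $U_\sigma\cap C\neq\emptyset$ pick a witness point $p_\sigma\in U_\sigma\cap C$ and replace $C$ by the polytope $C'=\conv(\{p_\sigma\})$. Then $C'\subseteq C$ gives one inclusion of nerves and the witnesses give the other, so $\nerve(\{U_i\cap C'\})=\nerve(\{U_i\cap C\})$; unboundedness, non\mbox{-}closedness, and non\mbox{-}polytopality of $C$ all disappear at once.

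The genuine gap is the one you flag yourself: the hyperplane case --- equivalently, the case where the relevant convex set is lower\mbox{-}dimensional --- is diagnosed but never proved. Your reduction of a general affine hull to a single hyperplane $L$ is fine, and your diagnosis is accurate: the obstruction is a face $\sigma$ with $U_\sigma\cap L=\emptyset$ but $\dist(U_\sigma,L)=0$. But invoking ``a shrinking result of the type of Lemma~\ref{lem:shrink}'' does not close this: after shrinking, $V_\sigma\cap L=\emptyset$ still does not force $\overline{V_\sigma}\cap L=\emptyset$ (the closed polytope $\overline{V_\sigma}$ may touch $L$ along its boundary), so one must additionally perturb $L$ to general position; and one must separately verify that the shrinking preserves the slice nerve $\nerve(\{U_i\cap L\})$, which property (iii) of Lemma~\ref{lem:shrink} does not immediately give for a measure\mbox{-}zero slice (it needs a two\mbox{-}sided ball argument as in the proof of Theorem~\ref{thm:halfspace}). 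So the ``crux'' you defer is essentially the entire content of this case, and as written your argument does not establish the corollary for, say, $C$ a point or a segment --- a case that is actually needed later, since Theorem~\ref{thm:(k-1)rep} applies the corollary with $C=\conv(\{p_1,\dots,p_k\})$, which is typically lower\mbox{-}dimensional. (Your instinct that something nontrivial hides here is sound: the witness\mbox{-}point trick does not by itself dispose of it either, since a lower\mbox{-}dimensional polytope can only be written as an intersection of closed halfspaces containing opposite pairs, where the interior\mbox{-}passing step degenerates. But a proof must resolve the difficulty, not merely locate it.)
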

\begin{proof}
For all $U_\sigma$ such that $U_\sigma\cap C\neq \emptyset$, choose a point $p_\sigma\in U_\sigma\cap C$, and let $C'$ be the convex hull of these points. Observe that $C'$ is a polytope contained in $C$ such that $\nerve(\{U_i\cap C\}) = \nerve(\{U_i\cap C'\})$. Since $C'$ is the intersection of finitely many closed halfspaces, we can repeatedly apply Theorem \ref{thm:halfspace} to obtain that $\Delta$ collapses to $\nerve(\{U_i\cap C'\}_{i=1}^n)$, proving the result. 
\end{proof}

\begin{corollary}\label{cor:collapsetostar}\label{cor:collapsetosimplex}
Let $\Delta$ be a convex union representable complex, and let $\sigma\in\Delta$ be an arbitrary face. Then $\Delta$ collapses onto the star of $\sigma$. In particular, if $\sigma\neq\emptyset$, then $\Delta$ collapses onto $\dc{\sigma}$. 
\end{corollary}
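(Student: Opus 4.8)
The plan is to apply Corollary~\ref{cor:collapsetoconvexset} to a carefully chosen convex set, namely $C:=U_\sigma=\bigcap_{i\in\sigma}U_i$. First I would fix a $d$-convex union representation $\{U_i\}_{i=1}^n$ of $\Delta$. We may assume $\sigma\neq\emptyset$, since for $\sigma=\emptyset$ the star of $\sigma$ is all of $\Delta$ and there is nothing to prove. Because $\sigma\in\Delta$, the set $C=U_\sigma$ is a nonempty convex subset of $\R^d$, so Corollary~\ref{cor:collapsetoconvexset} tells us that $\Delta$ collapses onto $\nerve\big(\{U_i\cap C\}_{i=1}^n\big)$.

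The next step is to identify this nerve. For nonempty $\tau\subseteq[n]$ one has $\bigcap_{i\in\tau}(U_i\cap C)=U_\tau\cap U_\sigma=U_{\tau\cup\sigma}$, so $\tau\in\nerve(\{U_i\cap C\})$ exactly when $U_{\tau\cup\sigma}\neq\emptyset$, i.e.\ when $\tau\cup\sigma\in\Delta$, i.e.\ when $\tau\in\cstar_\Delta(\sigma)$; the empty face lies in both complexes, because $C=U_\sigma\neq\emptyset$ forces the convention $U_\emptyset\neq\emptyset$ to hold for the collection $\{U_i\cap C\}$, and because $\sigma\in\Delta$. Thus $\nerve(\{U_i\cap C\})=\cstar_\Delta(\sigma)$, which gives the first assertion.

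For the ``in particular'' part I would use the identity $\cstar_\Delta(\sigma)=\dc{\sigma}\ast\link_\Delta(\sigma)$ together with the observation that $\link_\Delta(\sigma)$ is nonvoid (it contains $\emptyset$ since $\sigma\in\Delta$). It then remains to verify the purely combinatorial claim that $\dc{\sigma}\ast\Gamma$ collapses onto $\dc{\sigma}$ for every nonvoid complex $\Gamma$ whose vertex set is disjoint from $\sigma$. Fixing a vertex $v\in\sigma$, the cone $v\ast\Gamma$ collapses onto its apex $\dc{v}$ (delete the facets of $\Gamma$, and then the remaining faces of $\Gamma$, in decreasing order of dimension); since a collapse of one factor induces a collapse of the whole join, $\dc{\sigma}\ast\Gamma=\dc{\sigma\setminus v}\ast(v\ast\Gamma)$ collapses onto $\dc{\sigma\setminus v}\ast\dc{v}=\dc{\sigma}$. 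Concatenating this with the collapse of $\Delta$ onto $\cstar_\Delta(\sigma)$ finishes the proof.

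I do not expect a genuine obstacle here: the entire weight of the statement is carried by Theorem~\ref{thm:halfspace}, hence by Corollary~\ref{cor:collapsetoconvexset}, and this corollary amounts to nothing more than choosing the right $C$. The only parts needing care are the nerve identification --- in particular keeping track of the $\tau=\emptyset$ / $U_\emptyset$ convention --- and invoking the two standard facts that a cone collapses onto its apex and that collapses are preserved under taking joins with a fixed complex.
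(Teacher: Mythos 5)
Your proposal is correct and takes essentially the same route as the paper: apply Corollary~\ref{cor:collapsetoconvexset} to $C=U_\sigma$ and identify $\nerve(\{U_i\cap C\})$ with $\cstar_\Delta(\sigma)$. The only difference is that you also spell out the standard collapse of $\cstar_\Delta(\sigma)=\dc{\sigma}\ast\link_\Delta(\sigma)$ onto $\dc{\sigma}$ for the ``in particular'' clause, which the paper leaves implicit.
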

\begin{proof} Let $\{U_i\}_{i=1}^n$ be a $d$-convex union representation of $\Delta$, and let $C=U_\sigma$. Then $C$ is a nonempty convex subset of $\R^d$. Therefore, by Corollary \ref{cor:collapsetoconvexset}, $\Delta$ collapses onto $\nerve\big(\{U_i\cap C\}_{i=1}^n\big)$. The result follows since $\nerve(\{U_i\cap C\})=\cstar_\Delta(\sigma)$. Indeed, if $\tau\subseteq [n]$, then $\bigcap_{j\in\tau}(U_j\cap U_\sigma)=U_{\tau\cup\sigma}$. Thus, $\tau\in \nerve(\{U_i\cap C\})$ if and only if $\tau\cup\sigma\in\nerve(\{U_i\})=\Delta$, which happens if and only if $\tau\in\cstar_\Delta(\sigma)$.
\end{proof}


\begin{corollary}\label{cor:nocommonvertex}
Let $\Delta$ be a convex union representable complex. Then the free faces of $\Delta$ cannot all share a common vertex.  
\end{corollary}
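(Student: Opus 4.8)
The plan is to argue by contradiction: suppose $\Delta$ is convex union representable and that some vertex $v$ is contained in \emph{every} free face of $\Delta$. The crucial input is Corollary~\ref{cor:collapsetostar} applied to the face $\{v\}$, which says that $\Delta$ collapses onto $\cstar_\Delta(v)$. I would then observe that, under the standing assumption, every free face $\sigma$ of $\Delta$ satisfies $\sigma\cup\{v\}=\sigma\in\Delta$, so every free face of $\Delta$ already lies in $\cstar_\Delta(v)$. On the other hand, in any collapse of $\Delta$ onto $\cstar_\Delta(v)$ the first elementary collapse (if there is one) deletes a free face of $\Delta$ that does \emph{not} lie in $\cstar_\Delta(v)$, since a deleted face never reappears. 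These two facts are incompatible unless the collapse is empty, so $\Delta=\cstar_\Delta(v)$; equivalently, $v$ lies in every facet of $\Delta$ (if $F$ is a facet, then $F\in\Delta=\cstar_\Delta(v)$ gives $F\cup\{v\}\in\Delta$, and maximality of $F$ forces $F\cup\{v\}=F$).

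The second step turns this structural conclusion back into a free face avoiding $v$, producing the contradiction. Pick any facet $F$ of $\Delta$ (one exists because $v$ is a vertex, so $\Delta$ is nonvoid) and set $\rho:=F\setminus\{v\}$. Since $\rho\subsetneq F$, the face $\rho$ is not a facet; and any facet of $\Delta$ containing $\rho$ contains $v$ by the previous step, hence contains $\rho\cup\{v\}=F$, hence equals $F$. Thus $\rho$ is a free face of $\Delta$ contained in the unique facet $F$, yet $v\notin\rho$, contradicting the assumption. I would note that this last computation is uniform even in the degenerate case $\Delta=\dc{v}$, where $\rho=\emptyset$; here the convention that the empty face counts as a free face of a simplex is exactly what makes the bookkeeping go through.

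I do not expect a genuine obstacle here; it is a short deduction once one thinks to feed the single vertex $\{v\}$ into Corollary~\ref{cor:collapsetostar}. The point worth emphasizing is that the geometric hypothesis is used in an essential way precisely through that corollary --- that is, through the fact that a convex union representable complex collapses onto the star of \emph{every} face, not merely that it is collapsible --- so the right instinct is to pass to $\cstar_\Delta(v)$ first and then argue purely combinatorially.
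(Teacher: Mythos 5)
Your proof is correct and follows essentially the same route as the paper's: both feed the single vertex $\{v\}$ into Corollary~\ref{cor:collapsetostar}, deduce $\Delta=\cstar_\Delta(v)$ because any nontrivial collapse would have to delete a free face lying outside the star while all free faces are assumed to lie inside it, and then exhibit a free face avoiding $v$ in the resulting cone. Your explicit construction of $\rho=F\setminus\{v\}$ is just a spelled-out version of the paper's observation that any facet of $\Delta\setminus v$ is a free face of $\Delta$.
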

\begin{proof}
Suppose the free faces of $\Delta$ share a common vertex $v$. Then no collapse of $\Delta$ other than $\Delta$ itself would contain $\cstar_\Delta(v)$. Since $\Delta$ collapses to $\cstar_\Delta(v)$, this implies $\Delta = \cstar_\Delta(v)$. Thus $\Delta$ is a cone over $v$. But then any facet of $\Delta\setminus v$ is a free face of $\Delta$. Such a free face does not contain $v$, a contradiction. 
\end{proof}

In \cite[Theorem 2.3]{onefreeface} the authors construct examples for all $d\ge 2$ of a $d$-dimensional collapsible simplicial complex $\Sigma_d$ with only one free face. According to Corollary \ref{cor:nocommonvertex}, these provide an example of collapsible complexes that are not convex union representable.  (The complex $\Sigma_2$ has only $7$ vertices, see \cite[Figure 2]{onefreeface}.) The authors also give examples for all $d\ge2$ of a $d$-dimensional simplicial complex $E_d$ which is pure and non-evasive,  has only two free faces, and, furthermore, these two free faces share a common ridge (see \cite[Theorem 2.5]{onefreeface}). By Corollary \ref{cor:nocommonvertex} these complexes are not convex union representable either. We formalize these observations in the following corollary.

\begin{corollary}\label{cor:collapsiblebutnotrepresentable}\label{cor:nonevasivebutnotrepresentable}
The simplicial complexes $\Sigma_d$ of \cite{onefreeface} are pure, collapsible, and shellable, but not convex union representable. Similarly, the simplicial complexes $E_d$ of \cite{onefreeface} are pure and non-evasive, but not convex union representable.
\end{corollary}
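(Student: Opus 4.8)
The plan is to take the combinatorial properties of $\Sigma_d$ and $E_d$ as established in \cite{onefreeface} and to deduce the failure of convex union representability from Corollary \ref{cor:nocommonvertex} alone; no further geometry is required. Concretely, \cite[Theorem 2.3]{onefreeface} provides, for each $d\geq 2$, a pure $d$-dimensional complex $\Sigma_d$ that is shellable and collapsible and has exactly one free face, while \cite[Theorem 2.5]{onefreeface} provides a pure $d$-dimensional complex $E_d$ that is non-evasive (hence collapsible, since every non-evasive complex is) and has exactly two free faces sharing a common ridge. The positive properties asserted in the statement are quoted verbatim from those results, so in each case only non-representability is left to prove.

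For $\Sigma_d$ I would first observe that its unique free face $\sigma$ must be nonempty: if the empty face were free then $\Sigma_d$ would be a simplex, but a simplex on $d+1\geq 3$ vertices has every proper face free and hence more than one free face. Choosing any vertex $v\in\sigma$, we see that every free face of $\Sigma_d$ (there is only one) contains $v$. Were $\Sigma_d$ convex union representable, this would contradict Corollary \ref{cor:nocommonvertex}; hence $\Sigma_d$ is not convex union representable.

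For $E_d$ the two free faces share a common ridge $\rho$, and since $E_d$ is a pure $d$-dimensional complex with $d\geq 2$ the ridge $\rho$ is a nonempty face; picking a vertex $v\in\rho$, both free faces contain $v$. So all free faces of $E_d$ share the common vertex $v$, and Corollary \ref{cor:nocommonvertex} again rules out convex union representability.

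I do not expect any genuine obstacle here: once Corollary \ref{cor:nocommonvertex} and the constructions of \cite{onefreeface} are in hand, the argument is immediate. The one point that needs a moment of care is the degenerate case for $\Sigma_d$ --- ruling out the possibility that its single free face is the empty face, equivalently that $\Sigma_d$ is itself a simplex (which would be trivially convex union representable) --- and this is handled by the dimension count above.
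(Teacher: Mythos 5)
Your proof is correct and follows exactly the paper's route: quote the combinatorial properties of $\Sigma_d$ and $E_d$ from \cite{onefreeface} and apply Corollary \ref{cor:nocommonvertex} to the single free face (resp.\ the two free faces sharing a ridge). The only addition is your explicit check that the unique free face of $\Sigma_d$ is nonempty, a degenerate case the paper leaves implicit; it is a harmless and correct refinement.
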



\section{Proof of Theorem \ref{thm:halfspace}}\label{sec:proofs}
The goal of this section is to verify Theorem \ref{thm:halfspace}. This result is then used in the rest of the paper to establish several additional necessary conditions for a simplicial complex to be convex union representable. We begin with a lemma that allows us to modify a given convex union representation. While it is clear that one can slightly shrink the sets of a convex union representation to preserve the nerve, some care is needed to make sure that this shrinking also preserves convexity of the union (e.g., that it does not create ``holes"). In the following, for $V\subseteq \R^d$, we denote by $\ol{V}$ the closure of $V$. 

\begin{lemma}\label{lem:shrink}
Let $\{U_i\}_{i=1}^n$ be a convex union representation of a simplicial complex $\Delta$, where all $U_i$ are bounded. Then for all $\varepsilon >0$ there exists a convex union representation $\{V_i\}_{i=1}^n$ of $\Delta$ with the following properties:
\begin{itemize}
\item[(i)] $\ol{V_i}$ is a polytope contained in $U_i$ for all $i\in[n]$,
\item[(ii)] The union $\bigcup_{i=1}^n V_i$ is the interior of a polytope, and 
\item[(iii)] The Hausdorff distance between $\ol{V_\sigma}$ and $\ol{U_\sigma}$ is less than $\varepsilon$ for all $\emptyset\neq\sigma\in \Delta$. 
\end{itemize} 
\end{lemma}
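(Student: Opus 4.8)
The plan is to realise each $V_i$ as $\interior(Q)\cap\interior(P_i)$, where $Q$ is a single polytope inscribed in $\overline W:=\overline{\bigcup_i U_i}$ and each $P_i$ is a polytope inside $U_i$. The difficulty flagged in the statement is genuine: condition (i) pushes one to shrink $U_i$ towards an erosion $U_i^{(\tau)}:=\{x\in\R^d: B(x,\tau)\subseteq U_i\}$, but a finite union of erosions is usually not convex, so one cannot simply take $V_i$ to be a polytope trapped between $U_i^{(\tau)}$ and $U_i$ and hope the union stays convex. The way around this is to let $P_i$ additionally absorb enough ``bridging'' polytopes so that $\bigcup_i P_i$ contains a genuine polytope $Q$, and then intersect everything with $\interior(Q)$; convexity of $\bigcup_i V_i$ is then automatic, since that union turns out to be exactly $\interior(Q)$.

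In detail, I would proceed as follows. Assume not all $U_i$ are empty (otherwise take all $V_i=\emptyset$), so $W:=\bigcup_i U_i$ is a nonempty bounded open convex set. For each $\emptyset\neq\sigma\in\Delta$ fix $p_\sigma\in U_\sigma$ and $\rho_\sigma>0$ with $\overline{B(p_\sigma,2\rho_\sigma)}\subseteq U_\sigma$. Choose $\tau>0$ small (the precise constraint, in terms of $\varepsilon$, the $\rho_\sigma$, and a diameter bound for $W$, is pinned down in the last step), and, using the standard fact that a bounded open convex set is squeezed between polytopes of arbitrarily small Hausdorff distance, pick for each $i$ a polytope $E_i$ with $U_i^{(\tau)}\subseteq\interior(E_i)\subseteq E_i\subseteq U_i$. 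Next choose a polytope $Q$ with $Q\subseteq W$ and $\dist_H(Q,\overline W)$ as small as we like — in particular small relative to $\tau$, and small enough that every $p_\sigma$ lies in $\interior(Q)$. Since the open sets $U_i$ cover the compact set $Q$, there are finitely many balls $B(x_1,r_1),\dots,B(x_m,r_m)$ covering $Q$ with $\overline{B(x_k,2r_k)}\subseteq U_{i_k}$ for suitable indices $i_k$; replace each by a polytope $\tilde B_k$ with $\overline{B(x_k,r_k)}\subseteq\tilde B_k\subseteq B(x_k,2r_k)$. Finally set
\[
P_i\od\conv\Big(E_i\cup\bigcup_{k:\,i_k=i}\tilde B_k\Big),\qquad V_i\od\interior(Q)\cap\interior(P_i),
\]
with $P_i=V_i=\emptyset$ when $U_i=\emptyset$.

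Now the verifications. Each $P_i$ is a convex hull of finitely many polytopes, hence a polytope, and it is contained in the convex set $U_i$; thus $\overline{V_i}=Q\cap P_i$ (full-dimensional once $V_i\neq\emptyset$ is checked) is a polytope inside $U_i$, giving (i). For (ii): the balls $B(x_k,r_k)$ cover $Q$ and $\interior(\tilde B_k)\subseteq\interior(P_{i_k})$, so $Q\subseteq\bigcup_i\interior(P_i)$, whence $\bigcup_i V_i=\interior(Q)\cap\bigcup_i\interior(P_i)=\interior(Q)$, the interior of a polytope. For the nerve: $V_i\subseteq U_i$ gives $\nerve(\{V_i\})\subseteq\Delta$, while for $\sigma\in\Delta$ the sets $E_i$ give $\bigcap_{i\in\sigma}P_i\supseteq\bigcap_{i\in\sigma}U_i^{(\tau)}=(U_\sigma)^{(\tau)}$, which contains $p_\sigma\in\interior(Q)$, so $\sigma\in\nerve(\{V_i\})$; this also gives $V_\sigma\neq\emptyset$ for all $\sigma\in\Delta$, justifying the full-dimensionality claims and the identification $\overline{V_\sigma}=Q\cap\bigcap_{i\in\sigma}P_i$.

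It remains to verify (iii), which I expect to be the delicate point. The inclusion $\overline{V_\sigma}=Q\cap\bigcap_{i\in\sigma}P_i\subseteq\bigcap_{i\in\sigma}U_i=U_\sigma\subseteq\overline{U_\sigma}$ is immediate. For the reverse, take $\bar u\in\overline{U_\sigma}$ and push it towards $p_\sigma$: with $s:=\tau/\rho_\sigma$, the point $q:=(1-s)\bar u+sp_\sigma$ lies in $(U_\sigma)^{(2\tau)}$ (because $B(q,2\tau)=(1-s)\bar u+sB(p_\sigma,2\rho_\sigma)\subseteq U_\sigma$), and $\|q-\bar u\|=s\|p_\sigma-\bar u\|$ is small once $\tau$ is small. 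Since $\bar u\in\overline W$ and $\dist_H(Q,\overline W)$ is small, choose $\bar u^\ast\in Q$ within $\dist_H(Q,\overline W)$ of $\bar u$; then $q^\ast:=(1-s)\bar u^\ast+sp_\sigma$ lies in $Q$ (a convex combination of points of $Q$) and within $\dist_H(Q,\overline W)$ of $q$, hence in $(U_\sigma)^{(2\tau-\dist_H(Q,\overline W))}\subseteq(U_\sigma)^{(\tau)}\subseteq\bigcap_{i\in\sigma}\interior(E_i)\subseteq\bigcap_{i\in\sigma}P_i$. Thus $q^\ast\in Q\cap\bigcap_{i\in\sigma}P_i=\overline{V_\sigma}$ and $\|q^\ast-\bar u\|<\varepsilon$ provided $\tau$ was chosen small relative to $\varepsilon$ (and the $\rho_\sigma$ and $\operatorname{diam}W$), and $\dist_H(Q,\overline W)$ small relative to $\tau$. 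The one step requiring real care is exactly this: the correction that moves a point into $Q$ must not evict it from $\bigcap_{i\in\sigma}P_i$, which is why the erosion radius built into the $E_i$ must dominate the Hausdorff error $\dist_H(Q,\overline W)$, and why the quantities must be chosen in the order $\tau$ first, then $Q$ (and only then the covering balls and the $P_i$).
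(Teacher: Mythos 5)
Your argument is correct, and it follows the same high-level strategy as the paper's proof --- inscribe a single large polytope in $\bigcup_i U_i$, cover it by small convex pieces each contained in some $U_i$, and assemble the $V_i$ from these pieces so that their union is exactly the interior of that polytope --- but the implementation is genuinely different. The paper overlays a cubical lattice $(\delta\Z)^d$ on $C=\conv(\bigcup_\sigma P_\sigma)$ with $\delta$ a Lebesgue number of the cover, lets $S$ be the vertex set of the cells $W_p\cap C$, and takes $V_i=\interior(\conv(S\cap U_i))$; the union equals $\interior C$ because each cell lies in some $U_i$, and the Hausdorff bound comes from the pre-chosen polytopes $P_\sigma\subseteq U_\sigma$ together with the bound on the cell diameter. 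You instead build each $P_i$ from an outer approximation $E_i$ of the erosion $U_i^{(\tau)}$ plus bridging polytopes from a ball cover of $Q$, and then intersect everything with $\interior(Q)$, which makes property (ii) tautological ($\bigcup_i V_i=\interior Q$) at the cost of a slightly more delicate Hausdorff estimate, handled by pushing a boundary point of $U_\sigma$ radially toward $p_\sigma$ into a deep erosion and then correcting into $Q$. Your ordering of the parameters ($\tau$ first, then $\dist_H(Q,\ol{W})\leq\tau$, then the cover) is exactly what makes that correction safe, and the identity $\bigcap_{i\in\sigma}U_i^{(\tau)}=(U_\sigma)^{(\tau)}$ does the work that the Lebesgue-number condition (1) does in the paper. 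The only point stated without justification is that $\dist_H(Q,\ol{W})$ small forces $p_\sigma\in\interior(Q)$; this is true (if $p_\sigma\notin\interior(Q)$, a hyperplane separating $p_\sigma$ from $Q$ puts the point $p_\sigma+\rho_\sigma\nu\in W$ at distance at least $\rho_\sigma$ from $Q$, so $\dist_H(Q,\ol{W})<\min_\sigma\rho_\sigma$ suffices), but it deserves a sentence.
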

\begin{proof}
The result holds if all $U_i$ are empty sets: simply take all $V_i$ to be empty. Thus, assume $\Delta$ has at least one vertex, and for each face $\emptyset\neq\sigma\in \Delta$, fix a $d$-dimensional polytope $P_\sigma\subseteq U_\sigma$ with the property that $\dist_H\big(\overline{U_\sigma},P_\sigma\big)<\varepsilon/3$. Let $C$ be the convex hull of the union of all $P_\sigma$. Then $C$ is a $d$-dimensional polytope (hence compact) covered by $\{U_i\}_{i=1}^n$, so we may choose a Lebesgue number $\delta>0$ for this cover. 

Consider the lattice $(\delta\Z)^d$. For every point $p$ in this lattice, let $W_p$ be the closed $d$-cube with side length $2\delta$ centered at $p$. Then $W_p\cap C$ is a polytope. Let $S$ denote the collection of vertices of all nonempty polytopes of this form. Note that $S$ is finite and that $S$ might not be a subset of $(\delta\Z)^d$ since some of the cells of the lattice may only partially intersect $C$. 

 By shrinking $\delta$, we may assume that the following conditions hold:\begin{itemize}
\item[(1)] Every nonempty set of the form $U_\sigma\cap C$ contains some $W_p$, 
\item[(2)] for every $p\in (\delta\Z)^d$ with $W_p\cap C \neq \emptyset$, there exists $i\in[n]$ such that $W_p\subseteq U_i$, and
\item[(3)] the diameter of $W_p$ is less than $\varepsilon/3$.
\end{itemize}
For $i\in [n]$, define \[
V_i = \interior(\conv(S\cap U_i)), \mbox{ so that } \ol{V_i}=\conv(S\cap U_i)
\]
is a polytope contained in $U_i$. In the remainder of the proof, we use conditions (1)-(3) to show that $\{V_i\}_{i=1}^n$ is the desired convex union representation of $\Delta$.

By choice of $C$, $U_\sigma\cap C\neq \emptyset$ for every face $\emptyset\neq\sigma\in \Delta$. Condition (1) then implies that $U_\sigma\cap C$ contains $W_p$ for some $p \in (\delta\Z)^d$, which, in turn, implies that $p\in V_i$ for all $i\in \sigma$. Thus $\sigma\in \nerve(\{V_i\}_{i=1}^n)$. Since $V_i\subseteq U_i$ for all $i$, we conclude that $\nerve(\{V_i\}_{i=1}^n) = \Delta$.

To verify property (ii), we show that $\bigcup_{i=1}^n V_i=\interior C$. Indeed, $\interior C$ is the union of all sets in the collection $\{\interior W_p\cap\interior  C \neq \emptyset \mid p\in(\delta\Z)^d\}$. Furthermore, by condition (2), each polytope $W_p\cap C$ is contained in $U_i$ for some $i$, and so its vertices are in $U_i\cap S$. By definition of $V_i$ this implies that $\interior (W_p\cap C) \subseteq V_i$. The assertion follows since $\interior(W_p\cap C) = \interior W_p\cap \interior C$. 

For property (iii), note that $P_\sigma\subseteq \overline{U_\sigma}\cap C\subseteq \overline{U_\sigma}$, and so by choice of $P_\sigma$,
\[\dist_H\big(\overline{U_\sigma}, \overline{U_\sigma}\cap C)\leq \dist_H\big(\overline{U_\sigma}, P_\sigma)<\varepsilon/3.
\]
Also, $\dist_H\big(\overline{U_\sigma}\cap C, \overline{V_\sigma}\big)<\varepsilon/3$ by definition of the sets $V_i$ and by condition (3). Property (iii) follows since the Hausdorff distance is a metric on the space of compact subsets of $\R^d$.
\end{proof}

An interesting consequence of this lemma (see Proposition \ref{prop:polytoperealization}) is that convex union representability does not change if we replace the openness requirement with closedness. (However, dropping this requirement outright would change the notion: consider for example $U_1 = (0,1)$ and $U_2 = [1,2)$ in $\R^1$. The nerve of this collection is not even a connected simplicial complex, but $U_1\cup U_2$ is a convex subset of $\R^1$.) In contrast, this equivalence does not hold for convex realizations of neural codes; for details, see Remark \ref{rem:closed-vs-open} below.

\begin{proposition}\label{prop:polytoperealization} For a simplicial complex $\Delta$ with $n$ vertices, the following are equivalent:
\begin{enumerate}
\item $\Delta$ is $d$-convex union representable.
\item There exists a $d$-convex union representation $\{V_i\}_{i=1}^n$ of $\Delta$ such that (i) the collection $\{\ol{V_i}\}_{i=1}^n$ of closures of the $V_i$ has nerve $\Delta$,  (ii) each $\ol{V_i}$ is a polytope, and (iii) the union of all $V_i$ is the interior of a polytope. 
\item $\Delta$ is the nerve of a collection $\{P_i\}_{i=1}^n$ of $d$-dimensional polytopes whose union is a polytope in $\R^d$.
\item $\Delta$ is the nerve of a collection $\{A_i\}_{i=1}^n$ of convex closed sets whose union is a closed convex set in $\R^d$.
\end{enumerate}\end{proposition}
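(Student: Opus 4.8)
The plan is to prove the chain of implications $(1)\Rightarrow(2)\Rightarrow(3)\Rightarrow(4)\Rightarrow(1)$; note that $(2)\Rightarrow(1)$ is trivial, since a representation as in (2) is in particular a $d$-convex union representation. Throughout we may assume that every element of the ground set $[n]$ is a vertex of $\Delta$, and in particular that $\Delta$ is nonvoid: the only other complex that occurs as a nerve is the void complex, for which all four statements hold via the empty collection.

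For the implication $(1)\Rightarrow(2)$, I would first reduce to a bounded representation and then invoke Lemma~\ref{lem:shrink}. Given a $d$-convex union representation $\{U_i\}_{i=1}^n$, for each face $\emptyset\neq\sigma\in\Delta$ pick $p_\sigma\in U_\sigma$ and let $Q$ be the convex hull of these finitely many points; then $Q$ is a polytope contained in the open convex set $\bigcup_{i}U_i$, so by compactness there is $\varepsilon>0$ with $Q+B_\varepsilon\subseteq\bigcup_i U_i$. Now $\{U_i\cap(Q+B_\varepsilon)\}_{i=1}^n$ is again a $d$-convex union representation of $\Delta$: its union is the open convex set $Q+B_\varepsilon$, and its nerve is caught between $\Delta$ (witnessed by the $p_\sigma$) and $\nerve(\{U_i\})=\Delta$; moreover all of its members are bounded. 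Applying Lemma~\ref{lem:shrink} to this bounded representation yields $\{V_i\}_{i=1}^n$ with each $\ol{V_i}$ a polytope contained in $U_i\cap(Q+B_\varepsilon)$ and with $\bigcup_i V_i$ equal to the interior of a polytope; and from $V_i\subseteq\ol{V_i}\subseteq U_i\cap(Q+B_\varepsilon)$ we get $\Delta=\nerve(\{V_i\})\subseteq\nerve(\{\ol{V_i}\})\subseteq\nerve(\{U_i\cap(Q+B_\varepsilon)\})=\Delta$, so the closures have nerve $\Delta$ as well. This is precisely statement (2). For $(2)\Rightarrow(3)$, put $P_i:=\ol{V_i}$: these are polytopes by (2)(ii), each is $d$-dimensional because $V_i$ is a nonempty open set, and their nerve is $\Delta$ by (2)(i); writing $R$ for the necessarily full-dimensional polytope whose interior is $\bigcup_i V_i$, we have $\bigcup_i P_i=\bigcup_i\ol{V_i}=\ol{\bigcup_i V_i}=\ol{\interior R}=R$, a polytope. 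The implication $(3)\Rightarrow(4)$ is immediate, since polytopes are closed convex sets and their union is a polytope.

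The substantive step is $(4)\Rightarrow(1)$, where one must return from closed sets to open ones. Given closed convex sets $\{A_i\}_{i=1}^n$ whose union $A=\bigcup_i A_i$ is closed convex and whose nerve is $\Delta$, the naive attempt of replacing each $A_i$ by the open neighborhood $A_i+B_\varepsilon$ gets the union right --- $(\bigcup_i A_i)+B_\varepsilon$ is convex and open --- but can destroy the nerve, since two disjoint closed convex sets may lie at distance zero. To remedy this I would first bound the sets: choose $q_\sigma\in A_\sigma$ for each $\emptyset\neq\sigma\in\Delta$, let $Q$ be the convex hull of these points (a polytope, and $Q\subseteq A$), and replace each $A_i$ by $A_i\cap Q$; these are compact convex sets with union $Q$ and, by the same squeeze as before, nerve $\Delta$. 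Now for each $\sigma\subseteq[n]$ with $\sigma\notin\Delta$, the sets $\bigcap_{i\in\sigma}(A_i+\ol{B_\delta})$ for $\delta>0$ form a decreasing family of compact subsets of $Q+\ol{B_1}$ whose intersection over all $\delta>0$ equals $\bigcap_{i\in\sigma}A_i=\emptyset$, so by the finite intersection property some $\delta=\varepsilon_\sigma>0$ already satisfies $\bigcap_{i\in\sigma}(A_i+\ol{B_{\varepsilon_\sigma}})=\emptyset$, hence $\bigcap_{i\in\sigma}(A_i+B_{\varepsilon_\sigma})=\emptyset$. Setting $\varepsilon=\min\{\varepsilon_\sigma:\sigma\subseteq[n],\ \sigma\notin\Delta\}>0$ and $U_i:=A_i+B_\varepsilon$, one obtains convex open sets whose union $Q+B_\varepsilon$ is convex and open and whose nerve is exactly $\Delta$, which establishes (1). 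I expect $(4)\Rightarrow(1)$ to be the main obstacle: the reduction to compact sets is exactly what makes the $\varepsilon$-thickening controllable, and the compactness argument is what rules out the distance-zero pathology; everything else is bookkeeping with nerves, closures, and Lemma~\ref{lem:shrink}.
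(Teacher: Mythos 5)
Your proof is correct and follows essentially the same route as the paper: reduce to bounded sets and apply Lemma~\ref{lem:shrink} for $(1)\Rightarrow(2)$, take closures for $(2)\Rightarrow(3)\Rightarrow(4)$, and thicken compactified sets by a small Minkowski ball for $(4)\Rightarrow(1)$. The only differences are cosmetic --- you bound by intersecting with a thickened polytope rather than a large ball, and your finite-intersection-property argument makes explicit the ``positive distance'' claim that the paper states in one line.
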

\begin{proof}
We first show that (1) implies (2). Let $\{U_i\}_{i=1}^n$ be a convex union representation of $\Delta$. By intersecting with an open ball of a sufficiently large radius, we can assume that all sets $U_i$ are bounded. Choose a representation $\{V_i\}_{i=1}^n$ as guaranteed by Lemma \ref{lem:shrink}. Properties (ii) and (iii) of (2) follow from the statement of Lemma \ref{lem:shrink}, so we just need to check that the nerve of $\{\overline V_i\}_{i=1}^n$ is $\Delta$. This is immediate from the fact that $V_i\subseteq\overline V_i\subseteq U_i$ for all $i\in n$.

The implications $(2)\Rightarrow (3)$ and $(3)\Rightarrow (4)$ are straightforward: the former by taking closures of the $V_i$, and the latter since polytopes are closed and convex. 

For the implication (4) $\Rightarrow$ (1), assume that all $A_i$ are compact by intersecting with a closed ball of sufficiently large radius. Then if $A_\sigma$ and $A_\tau$ are disjoint, they have positive distance, (i.e., $\min\{\|x-y\| \mid x\in A_\sigma, y\in A_\tau\}>0$), 
and so we can take the Minkowski sum of all $A_i$ with an open ball $B_\varepsilon$ of sufficiently small radius while preserving the nerve. The resulting collection $\{A_i+B_\varepsilon\}_{i=1}^n$ is a convex union representation of $\Delta$ in $\R^d$: the sets $A_i+B_\varepsilon$ are convex open sets, and so is their union $\bigcup_{i=1}^n \big(A_i+B_\varepsilon\big)=\big(\bigcup_{i=1}^n A_i\big)+B_\varepsilon$.
\end{proof}

The proof of Theorem \ref{thm:halfspace} relies on an application of Lemma \ref{lem:shrink}, paired with the methods of \cite{wegner} and the following lemma.

\begin{lemma}\label{lem:twofaces}
Let $\Gamma\subsetneq \Delta$ be acyclic simplicial complexes. Then $(\Delta,\Gamma)$ contains at least two faces. 
\end{lemma}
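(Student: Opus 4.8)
The plan is to argue by contradiction. Since $\Gamma\subsetneq\Delta$, the relative complex $(\Delta,\Gamma)$ has at least one face, so suppose it has exactly one, say $\sigma$; thus $\Delta=\Gamma\cup\{\sigma\}$ with $\sigma\notin\Gamma$. I would extract a contradiction from the reduced Euler characteristic. Recall that for a nonvoid complex $\Lambda$ one has $\tilde\chi(\Lambda)=\sum_{\tau\in\Lambda}(-1)^{\dim\tau}$, the sum running over \emph{all} faces of $\Lambda$ with the empty face contributing $(-1)^{-1}=-1$, while $\tilde\chi(\emptyset)=0$; and $\tilde\chi(\Lambda)=0$ whenever $\Lambda$ is acyclic.

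The first step is to note that $\Gamma\subsetneq\Delta$ forces $\Delta$ to be nonvoid, and that either $\Gamma$ is void or else $\Gamma$ and $\Delta$ share the empty face. In either case the empty face makes no net contribution, so
\[
\tilde\chi(\Delta)-\tilde\chi(\Gamma)=\sum_{\tau\in(\Delta,\Gamma)}(-1)^{\dim\tau}.
\]
The second step is to invoke acyclicity of $\Delta$ and $\Gamma$: the left-hand side equals $0-0=0$. But if $(\Delta,\Gamma)=\{\sigma\}$, the right-hand side equals $(-1)^{\dim\sigma}=\pm1\neq0$, which is absurd. Hence $(\Delta,\Gamma)$ contains at least two faces. (The bound is sharp: take $\Delta$ to be a single edge and $\Gamma$ one of its endpoints.)

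The same computation has a homological shadow one could use instead: if $(\Delta,\Gamma)=\{\sigma\}$, then the relative chain complex is free on the single generator $\sigma$ with all differentials zero, so $H_j(\Delta,\Gamma)\cong\Z$ for $j=\dim\sigma$ and is $0$ otherwise; whereas the long exact sequence of the pair, together with $\tilde H_\ast(\Delta)=\tilde H_\ast(\Gamma)=0$, forces $H_j(\Delta,\Gamma)=0$ for all $j$. I do not anticipate a substantive obstacle; the only delicate point is the bookkeeping around the void complex $\emptyset$ versus the empty complex $\{\emptyset\}$ and the degree $-1$ empty face — specifically, one must phrase the Euler-characteristic identity above so that it remains valid when $\Gamma$ is void, and note that the acyclicity of $\Delta$ rules out the degenerate scenario $\sigma=\emptyset$, in which $\Delta$ would equal $\{\emptyset\}$.
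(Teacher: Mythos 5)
Your proof is correct, and it takes a genuinely different route from the paper's. The paper argues topologically: assuming $(\Delta,\Gamma)=\{\sigma\}$ with $k=\dim\sigma\ge 0$, it observes that $\Delta=\Gamma\cup\dc{\sigma}$ and $\Gamma\cap\dc{\sigma}=\partial\dc{\sigma}$, and then a Mayer--Vietoris sequence forces $\tilde{H}_{k-1}(\partial\dc{\sigma})=0$, contradicting the nonvanishing homology of the boundary of a simplex. Your argument replaces this with a reduced Euler characteristic count: acyclicity of $\Delta$ and $\Gamma$ gives $\tilde\chi(\Delta)=\tilde\chi(\Gamma)=0$, so the alternating sum over the faces of $(\Delta,\Gamma)$ vanishes, which a single face cannot achieve. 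This is more elementary (it needs only that acyclic complexes have vanishing reduced Euler characteristic, not the full homology groups), it handles the degenerate case $\sigma=\emptyset$ automatically since $(-1)^{-1}=-1\neq 0$, and it in fact yields slightly more: the faces of $(\Delta,\Gamma)$ must split with equal alternating weight, so there are at least two faces of different dimension parities. What the paper's approach buys instead is the identification of the precise local obstruction (the would-be unique face $\sigma$ is glued to $\Gamma$ along all of $\partial\dc{\sigma}$), which is the geometric picture reused in the proof of Theorem \ref{thm:halfspace}. Your bookkeeping around the void versus empty complex is handled correctly and matches the paper's conventions.
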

\begin{proof} Since $\{\emptyset\}$ is not acyclic, $(\Delta,\Gamma)$ contains at least one nonempty face.
Suppose for contradiction that $(\Delta,\Gamma)$ contains a unique face $\sigma$, and let $k=\dim\sigma\geq 0$. Observe that $\dc\sigma\cap \Gamma = \partial\dc\sigma$, and that $\Delta = \Gamma \cup \dc\sigma$. We obtain a Mayer-Vietoris sequence \[
\cdots\to \tilde{H}_{k}(\Delta)\to \tilde{H}_{k-1}(\partial\dc\sigma) \to \tilde{H}_{k-1}(\Gamma)\oplus \tilde{H}_{k-1}(\dc\sigma) \to  \cdots.
\]
Since $\Delta$, $\Gamma$, and $\dc\sigma$ are all acyclic this part of the sequence becomes $0 \to \tilde{H}_{k-1}(\partial\dc\sigma) \to 0$.
This is a contradiction to the fact that the boundary of $\dc\sigma$ has nonzero $(k-1)$-homology. 
\end{proof}

If $A$ is a(n oriented) hyperplane in $\R^d$, then $\R^d\setminus A$ consists of two open halfspaces which we denote by $A^+$ and $A^-$ --- the positive and negative side of $A$, respectively.
\begin{proof}[Proof of Theorem \ref{thm:halfspace}] 
Observe that it is enough to prove the result in the case that all $U_i$ are bounded. It also suffices to consider the case that $H$ is an open halfspace, since for a closed halfspace the nerve is unaffected by replacing the halfspace with its interior. Thus throughout the proof we assume that all $U_i$ are bounded and that $H$ is open. 

We work by induction on the number of faces in $\Delta$. There is nothing to prove if $\Delta=\emptyset$. If $\Delta$ is a single vertex, then every convex union representation consists of a single convex open set, and the nerve $\nerve(\{U_1\cap H\})$ is either $\Delta$ or the void complex, depending on whether $U_1\cap H$ is empty. In either case $\Delta$ collapses to $\nerve(\{U_1\cap H\})$ and the result follows.

Otherwise $\Delta$ has more than two faces. Again if $\nerve(\{U_i\cap H\}_{i=1}^n) = \Delta$ we are done. If not, let $A$ be the hyperplane defining the halfspace $H$, oriented so that $H = A^+$. Choose $\varepsilon$ so that every nonempty $U_\sigma\cap H$ contains an $\varepsilon$-ball, and apply Lemma \ref{lem:shrink} to obtain a new convex union representation $\{V_i\}_{i=1}^n$ of $\Delta$. Observe that by choice of $\varepsilon$ and property (iii) of Lemma \ref{lem:shrink}, $\nerve(\{U_i\cap H\}_{i=1}^n) = \nerve(\{V_i\cap H\}_{i=1}^n)$. Property (i) and the boundedness of the $U_i$ imply that if $V_\sigma\cap H = \emptyset$, then $V_\sigma$ and $H$ have positive distance to one another. Thus we may perturb the position and angle of $H$ slightly without changing the nerve $\nerve(\{V_i\cap H\}_{i=1}^n)$.

Perform a perturbation of $H$ so that it is in general position relative to $\{V_i\}_{i=1}^n$ in the following sense: no hyperplane parallel to $A$ simultaneously supports two disjoint nonempty $V_\sigma$ and $V_\tau$. For all facets $\sigma$ in the relative complex $\big(\Delta,\nerve(\{V_i\cap H\}_{i=1}^n)\big)$, let $d_\sigma$ be the distance from $V_\sigma$ to $H$. There is at least one such facet $\sigma$ since we are assuming that $\nerve(\{V_i\cap H\}_{i=1}^n)$ is a proper subcomplex of $\Delta$. The distances $d_\sigma$ are finite since each $V_\sigma$ is bounded, and they are distinct by genericity of $A$. Let $V_{\sigma_0}$ be the region whose distance to $H$ is largest. Let $A_0$ be the hyperplane separating $V_{\sigma_0}$ from $H$ and supporting $V_{\sigma_0}$, oriented so that $H$ lies on its positive side. Finally, let $\Gamma = \nerve(\{V_i\cap A_0^+\}_{i=1}^n)$. Then $\Gamma$ is a proper acyclic subcomplex of $\Delta$ containing $\nerve(\{V_i\cap H\}_{i=1}^n)$ and ${\sigma_0}$ is the unique facet of $(\Delta,\Gamma)$. Applying Lemma \ref{lem:twofaces} we conclude  that there is a minimal face $\tau_0\in \Delta\setminus \Gamma$ with $\tau_0\subsetneq \sigma_0$. Uniqueness of $\sigma_0$ implies that $\tau_0$ is a free face of $\Delta$.

We modify our representation one last time. By property (i) of Lemma \ref{lem:shrink}, disjoint $V_\sigma$ and $V_\tau$ have nonzero distance between them, and furthermore any $V_\sigma$ with $V_\sigma\cap H = \emptyset$ has nonzero distance to $H$. Let $\delta>0$ be smaller than one half the minimum of all these distances, and let $B_\delta$ be the open ball with radius $\delta$ centered at the origin. For $i\in[n]$, define \[
W_i = \begin{cases} V_i& i\in \tau_0\\
(V_i+B_\delta)\cap \bigcup_{i=1}^n V_i & i\notin \tau_0.\end{cases}
\]
By choice of $\delta$ the nerve of $\{W_i\}_{i=1}^n$ is equal to $\Delta$. Moreover the union of the $W_i$ is the same as the union of the $V_i$. Since $W_i = V_i$ for $i\in \tau_0$, it follows that $V_{\tau_0} = W_{\tau_0}$ and that $W_{\sigma_0}$ is supported by $A_0$. Finally, by choice of $\delta$, $\nerve(\{W_i\cap H\}_{i=1}^n) = \nerve(\{V_i\cap H\}_{i=1}^n)$.

\[
\includegraphics[width=30em]{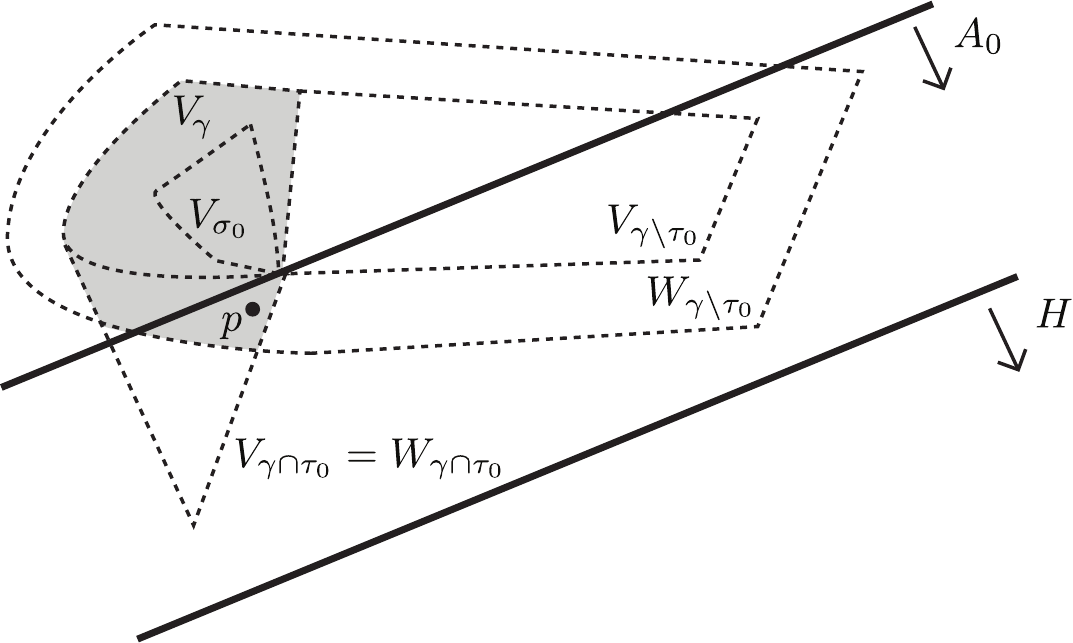}\]
Now for $i\in[n]$ define $X_i = W_i\cap A_0^+$. Then $\nerve(\{X_i\cap H\}_{i=1}^n) = \nerve(\{W_i\cap H\}_{i=1}^n)$ and $X_{\sigma_0} = W_{\sigma_0}\cap A_0^+ = \emptyset$. By inductive hypothesis, the nerve $\nerve(\{X_i\}_{i=1}^n)$ collapses to $\nerve(\{X_i\cap H\}_{i=1}^n)$. We claim that $\nerve(\{X_i\}_{i=1}^n)$ is equal to $\Delta\setminus \tau_0$. It suffices to show that $X_\gamma=W_\gamma\cap A_0^+$ is not empty for every $\gamma\in (\Delta, \Gamma)$ with $\tau_0\not\subseteq \gamma$. Note that for such a $\gamma$, $\tau_0\cap \gamma \subsetneq \tau_0$, and so by minimality of $\tau_0$, $W_{\tau_0\cap \gamma}\cap A_0^+=V_{\tau_0\cap \gamma}\cap A_0^+ \neq \emptyset$. 
Since $V_{\sigma_0}\subseteq W_{\sigma_0} \subseteq W_{\tau_0\cap \gamma}$ and $V_{\sigma_0}$ is supported by $A_0$, we may choose a point $p\in W_{\tau_0\cap \gamma}\cap A_0^+ $ which is arbitrarily close to $V_{\sigma_0}$. By construction of $W_i$ the region $W_{\gamma\setminus \tau_0}$ contains the Minkowski sum $V_{\sigma_0} + B_\delta$. But then it must contain $p$, so $W_\gamma = W_{\gamma\cap \tau_0} \cap W_{\gamma\setminus \tau_0}$ contains $p$. In particular, $W_\gamma$ has nonempty intersection with $A_0^+$. This situation is illustrated in the figure above. In the figure the shaded area is equal to $W_{\gamma}$. 

Since $W_\gamma\cap A_0^+ \neq \emptyset$ for all $\gamma\in \Delta\setminus \Gamma$ with $\tau_0\not\subseteq \gamma$, we conclude that $\nerve(\{X_i\}_{i=1}^n) = \Delta\setminus \tau_0$. Furthermore, since $\Delta\to \Delta\setminus \tau_0$ is a collapse, we conclude that $\Delta$ collapses to $\nerve(\{X_i\cap H\}_{i=1}^n) = \nerve(\{U_i\cap H\}_{i=1}^n)$, proving the result.
\end{proof}

%

\section{Constructible-like behavior} \label{sec:construtcable-like}
As we saw in Corollary \ref{cor:collapsiblebutnotrepresentable}, not all collapsible complexes are convex union representable. 
Thus our goal is to establish some additional necessary conditions for convex union representability. The corollaries enumerated in Section \ref{sec:collapsiblenotCUR} provide some such conditions. The following theorem is another step in this direction, and shows that convex union representable complexes are similar in spirit to constructible complexes --- a notion introduced in \cite{zeeman}. 

\begin{theorem}\label{thm:splitting}
Let $\Delta$ be a $d$-convex union representable simplicial complex, and let $\tau_1,\tau_2\in \Delta$ be such that $\tau_1\cup\tau_2 \notin\Delta$. Then there exist simplicial complexes $\Delta_1\subseteq \Delta\setminus \tau_1$ and $\Delta_2\subseteq \Delta\setminus \tau_2$ satisfying
\begin{itemize}
\item[(i)] $\Delta = \Delta_1\cup \Delta_2$;
\item[(ii)] $\Delta$ collapses onto $\Delta_i$ (for $i=1,2$);
\item[(iii)] $\Delta_i$ (for $i=1,2$) collapses onto $\Delta_1\cap \Delta_2$;
\item[(iv)] $\Delta_1$ and $\Delta_2$ are $d$-convex union representable, and
\item[(v)] $\Delta_1\cap \Delta_2$ is $(d-1)$-convex union representable.
\end{itemize}
\end{theorem}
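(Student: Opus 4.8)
The plan is to take a hyperplane $A$ separating $U_{\tau_1}$ from $U_{\tau_2}$, to set $\Delta_1 := \nerve(\{U_i\cap A^-\}_{i=1}^n)$ and $\Delta_2 := \nerve(\{U_i\cap A^+\}_{i=1}^n)$ where $A^{\pm}$ are the two open halfspaces bounded by $A$, and then to deduce (i)--(v) by bookkeeping which sets $U_\sigma$ meet which halfspace. The point where the argument has to be done with care is choosing $A$ in sufficiently general position; this is needed for (iii).

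First I would pass to a convenient representation. Starting from any $d$-convex union representation of $\Delta$, intersect all sets with a large open ball so that they are bounded, and then apply Lemma \ref{lem:shrink} to obtain a representation $\{U_i\}_{i=1}^n$ with each $\overline{U_i}$ a polytope contained in the corresponding set of the original representation. The purpose of this step is that since $\tau_1\cup\tau_2\notin\Delta$ the sets $U_{\tau_1},U_{\tau_2}$ of the original representation are disjoint, and the inclusions $\overline{U_{\tau_j}}\subseteq\bigcap_{i\in\tau_j}\overline{U_i}$ force the closures $\overline{U_{\tau_1}}$ and $\overline{U_{\tau_2}}$ in the new representation to be \emph{disjoint} compact convex sets. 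Hence they are strictly separated by an open, nonempty family of hyperplanes, and within this family I can choose $A$ to be in general position, meaning that $A$ is not a supporting hyperplane of any of the finitely many compact convex bodies $\overline{U_\sigma}$, $\sigma\in\Delta$ (the supporting hyperplanes of a fixed convex body with nonempty interior form a measure-zero subset of the space of hyperplanes). Finally, orient $A$ so that $U_{\tau_1}\subseteq A^+$; then automatically $U_{\tau_2}\subseteq A^-$.

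With $\Delta_1,\Delta_2$ as above, general position of $A$ gives a clean dichotomy: for each $\sigma\in\Delta$, either $\overline{U_\sigma}\cap A=\emptyset$, in which case $U_\sigma$ lies in exactly one of the two open halfspaces and $\dist(U_\sigma,A)>0$; or else $\overline{U_\sigma}$ meets both open halfspaces, and then so does the open set $U_\sigma$, which therefore meets $A$ by convexity. From this, (i), (ii), (iv) are short: (i) holds since every nonempty $U_\sigma$ meets $A^-$ or $A^+$, and $\Delta_i\subseteq\Delta\setminus\tau_i$ because $U_{\tau_1}\cap A^-=\emptyset$ and $U_{\tau_2}\cap A^+=\emptyset$; (ii) is Theorem \ref{thm:halfspace} applied to the open halfspaces $A^-$ and $A^+$; and (iv) holds because the collections $\{U_i\cap A^{\pm}\}_{i=1}^n$ consist of convex open sets with convex open unions $\big(\bigcup_i U_i\big)\cap A^{\pm}$. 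For (v), the same dichotomy (using that the $U_i$ are open) identifies $\Delta_1\cap\Delta_2$ with $\nerve(\{U_i\cap A\}_{i=1}^n)$; viewing $A$ as a copy of $\R^{d-1}$, the sets $U_i\cap A$ are convex and relatively open in $A$ with convex, relatively open union, so $\Delta_1\cap\Delta_2$ is $(d-1)$-convex union representable (here $d\ge 1$, since a $0$-convex union representable complex is $\emptyset$ or a simplex, which $\tau_1\cup\tau_2\notin\Delta$ rules out).

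The crux is (iii). To show $\Delta_1$ collapses onto $\Delta_1\cap\Delta_2$, note that $\{U_i\cap A^-\}_{i=1}^n$ is a $d$-convex union representation of $\Delta_1$, and apply Corollary \ref{cor:collapsetoconvexset} with the convex set taken to be a hyperplane $A_t$ parallel to $A$, lying strictly inside $A^-$ at small distance $t$ from $A$; this yields that $\Delta_1$ collapses onto $\nerve(\{U_i\cap A_t\}_{i=1}^n)$. It then remains to check that this nerve equals $\Delta_1\cap\Delta_2$ once $t$ is small: if $\sigma\in\Delta_1\cap\Delta_2$ then $U_\sigma$ meets $A$ and, being open, meets $A_t$ for all small $t$; whereas if $\sigma\in\Delta_1\setminus(\Delta_1\cap\Delta_2)$ then the dichotomy places $U_\sigma$ strictly on the $A^-$ side with $\dist(U_\sigma,A)>0$, so $U_\sigma$ misses $A_t$ as soon as $t$ is below the minimum of these finitely many positive distances (this minimum is over a nonempty set, as $\tau_2$ lies in it). A symmetric argument, with a hyperplane parallel to $A$ inside $A^+$, shows $\Delta_2$ collapses onto $\Delta_1\cap\Delta_2$. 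Everything else reduces to routine verification that the indicated nerves are as claimed.
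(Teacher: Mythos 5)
Your proof is correct and follows essentially the same route as the paper's: separate $U_{\tau_1}$ from $U_{\tau_2}$ by a hyperplane after shrinking via Lemma \ref{lem:shrink}, take $\Delta_1,\Delta_2$ to be the nerves of the restrictions to the two open halfspaces, get (ii) from Theorem \ref{thm:halfspace} and (iii) from Corollary \ref{cor:collapsetoconvexset}. The only cosmetic differences are that you use a nearby parallel hyperplane $A_t$ where the paper uses a thin slab $H+B_\varepsilon$ intersected with $H^+$, and that your explicit general-position choice of $A$ (no $\overline{U_\sigma}$ supported) cleanly justifies the positive-distance dichotomy that the paper attributes to Lemma \ref{lem:shrink}.
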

\begin{proof}
Let $\{U_1,\ldots, U_n\}$ be a convex union representation of $\Delta$. The condition that $\tau_1\cup\tau_2 \notin \Delta$ implies that $U_{\tau_1}$ and $U_{\tau_2}$ are disjoint. Thus we may choose a hyperplane $H$ separating $U_{\tau_1}$ and $U_{\tau_2}$, oriented such that $U_{\tau_2}$ lies on the open positive side $H^+$ of $H$. Apply Lemma \ref{lem:shrink} to obtain a new representation $\{V_1,\ldots, V_n\}$ of $\Delta$. This representation has the property that if $\sigma\in \Delta$ and $V_\sigma\cap H = \emptyset$, then there is a positive distance between $V_\sigma$ and $H$. In particular, there is a small $\varepsilon$ so that the Minkowski sum of $H$ with an $\varepsilon$-ball induces the same nerve as $H$ when intersected with the various $V_i$.

Now, let $\Delta_1$ be the nerve of $\{V_1\cap H^+, \ldots, V_n\cap H^+\}$ and let $\Delta_2$ be the nerve of $\{V_1\cap H^-, \ldots, V_n\cap H^-\}$. We claim that $\Delta_1$ and $\Delta_2$ satisfy the conditions stated above. 

First let us argue that $\Delta_1\subseteq \Delta\setminus\tau_1$ and $\Delta_2\subseteq \Delta\setminus\tau_2$. Note that $\Delta_1\subseteq \Delta$ since the sets representing $\Delta_1$ are subsets of the sets representing $\Delta$. Moreover $\tau_1\notin \Delta_1$ since $V_{\tau_1}\cap H^+ = \emptyset$, and thus $\Delta_1\subseteq \Delta\setminus\tau_1$. A symmetric argument shows that $\Delta_2\subseteq \Delta\setminus\tau_2$. 

For (i), let $\sigma\in \Delta$. Then $V_\sigma\neq\emptyset$, and since $V_\sigma$ is open it has nonempty intersection with $H^+$ or with $H^-$. In the former case $\sigma\in \Delta_1$ and in the latter $\sigma\in \Delta_2$. Thus $\Delta = \Delta_1\cup\Delta_2$. 
For (ii) we can apply Theorem \ref{thm:halfspace} with the open halfspaces $H^+$ and $H^-$. 

To prove (iii), we first claim that $\Delta_1\cap \Delta_2 = \nerve(\{V_i\cap H\}_{i=1}^n)$. If $\sigma\in \Delta_1\cap \Delta_2$, then $V_\sigma$ contains points on both sides of $H$, and by convexity it contains points in $H$. Thus $\sigma\in \nerve(\{V_i\cap H\}_{i=1}^n)$. Conversely, if $\sigma\in \nerve(\{V_i\cap H\}_{i=1}^n) $, then $V_\sigma$ contains points in $H$, and by openness it contains points in both $H^+$ and $H^-$. 

To see that $\Delta_i$ collapses to $\nerve(\{V_i\cap H\}_{i=1}^n)$, let $C$ be the Minkowski sum of $H$ with a small $\varepsilon$-ball, so that $\nerve(\{V_i\cap H\}_{i=1}^n) = \nerve(\{V_i\cap C\}_{i=1}^n)$. Then observe that $C\cap H^+$ induces the nerve $\Delta_1\cap \Delta_2$ when intersected with the convex union representation $\{V_i\cap H^+\}_{i=1}^n$ of $\Delta_1$. By Corollary \ref{cor:collapsetoconvexset} this implies that $\Delta_1$ collapses onto $\Delta_1\cap\Delta_2$. A symmetric argument shows that $\Delta_2$ collapses onto $\Delta_1\cap \Delta_2$. 

For (iv) simply observe that $\{V_1\cap H^+, \ldots, V_n\cap H^+\}$ and $\{V_1\cap H^-, \ldots, V_n\cap H^-\}$ are $d$-convex union representations for $\Delta_1$ and $\Delta_2$ respectively. 

To prove (v) recall that $\Delta_1\cap \Delta_2 = \nerve(\{V_i\cap H\}_{i=1}^n)$. Since $H\cong \R^{d-1}$, this yields a $(d-1)$-convex union representation of $\Delta_1\cap\Delta_2$.
\end{proof}


\begin{corollary} \label{cor:suspension}
Let $\Delta$ be a simplicial complex that is not $d$-convex union representable. Then the suspension of $\Delta$ is not $(d+1)$-convex union representable. In particular, if $\Delta$ is not convex union representable, then neither is $\Sigma\Delta$.
\end{corollary}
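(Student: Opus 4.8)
The plan is to reduce the statement to Theorem \ref{thm:splitting} by taking the two non-compatible faces to be the suspension vertices. Write $a,b$ for the suspension vertices, so that $\Sigma\Delta = \Delta\ast\{\emptyset,\{a\},\{b\}\}$; then $\{a\},\{b\}\in\Sigma\Delta$ while $\{a\}\cup\{b\}=\{a,b\}\notin\Sigma\Delta$. If $\Delta$ is the void complex there is nothing to prove, since the void complex is $d$-convex union representable for every $d$ (take all sets empty) and so the hypothesis is never satisfied; thus assume $\Delta$ is nonempty. Suppose for contradiction that $\Sigma\Delta$ is $(d+1)$-convex union representable. Apply Theorem \ref{thm:splitting} to $\Sigma\Delta$ with $\tau_1=\{a\}$ and $\tau_2=\{b\}$ to obtain simplicial complexes $\Delta_1\subseteq\Sigma\Delta\setminus\{a\}$ and $\Delta_2\subseteq\Sigma\Delta\setminus\{b\}$ with $\Delta_1\cup\Delta_2=\Sigma\Delta$ and such that $\Delta_1\cap\Delta_2$ is $d$-convex union representable.

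The heart of the argument is to show that $\Delta_1\cap\Delta_2=\Delta$. First note that $\Sigma\Delta\setminus\{a\}$ is exactly the cone $b\ast\Delta$: its faces are the faces of $\Sigma\Delta$ not containing $a$, namely the faces $\sigma$ and $\sigma\cup\{b\}$ with $\sigma\in\Delta$. Symmetrically $\Sigma\Delta\setminus\{b\}=a\ast\Delta$. A face lying in both $b\ast\Delta$ and $a\ast\Delta$ can contain neither $b$ (else it is not in $a\ast\Delta$) nor $a$ (else it is not in $b\ast\Delta$), hence lies in $\Delta$; conversely every face of $\Delta$ lies in both cones. Therefore $(b\ast\Delta)\cap(a\ast\Delta)=\Delta$, which gives the inclusion $\Delta_1\cap\Delta_2\subseteq\Delta$. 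For the reverse inclusion, let $\sigma$ be a facet of $\Delta$. Then $\sigma\cup\{a\}$ is a face of $\Sigma\Delta=\Delta_1\cup\Delta_2$; since no face of $\Delta_1\subseteq b\ast\Delta$ contains $a$, we must have $\sigma\cup\{a\}\in\Delta_2$, and hence $\sigma\in\Delta_2$. The symmetric argument applied to $\sigma\cup\{b\}$ shows $\sigma\in\Delta_1$, so $\sigma\in\Delta_1\cap\Delta_2$. As $\Delta_1\cap\Delta_2$ is closed under taking subsets and every face of $\Delta$ is contained in a facet of $\Delta$, this yields $\Delta\subseteq\Delta_1\cap\Delta_2$. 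Hence $\Delta_1\cap\Delta_2=\Delta$, and by Theorem \ref{thm:splitting}(v) this complex is $d$-convex union representable, contradicting the hypothesis. This proves the first assertion.

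For the ``in particular'' clause, suppose $\Delta$ is not convex union representable. Then $\Delta$ is nonempty, so $\Sigma\Delta$ is neither void nor a single simplex (it contains the two suspension vertices but not the edge between them), and in particular $\Sigma\Delta$ is not $0$-convex union representable; by the first assertion it is also not $(d+1)$-convex union representable for any $d\geq 0$. Hence $\Sigma\Delta$ is not $e$-convex union representable for any $e$, i.e., it is not convex union representable. As for the expected main obstacle: since everything funnels through Theorem \ref{thm:splitting}, the only genuinely substantive point is the exact computation of $\Delta_1\cap\Delta_2$ — the upper bound $\Delta_1\cap\Delta_2\subseteq\Delta$ is immediate from the cone description, and the slightly clever step is using the facets-plus-apex faces $\sigma\cup\{a\}$ and $\sigma\cup\{b\}$ of $\Sigma\Delta$ to force every facet of $\Delta$ into both $\Delta_1$ and $\Delta_2$; the degenerate cases ($\Delta$ void or $\Delta=\{\emptyset\}$) require only a sentence each.
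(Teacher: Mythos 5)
Your proof is correct and follows essentially the same route as the paper: apply Theorem \ref{thm:splitting} to the two suspension vertices, identify $\Delta_1\cap\Delta_2=\Delta$, and invoke part (v). The only cosmetic difference is that you establish $\Delta_1\cap\Delta_2=\Delta$ directly via the faces $\sigma\cup\{a\}$, $\sigma\cup\{b\}$, whereas the paper first deduces $\Delta_1=v\ast\Delta$ and $\Delta_2=u\ast\Delta$ from $\Delta_1\cup\Delta_2=\Sigma\Delta$; the two observations are interchangeable.
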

\begin{proof}
Assume $\Sigma\Delta$ is $(d+1)$-convex union representable, and let $u$ and $v$ be the suspension vertices. Observe that $\{u,v\}$ is not a face of $\Sigma\Delta$, so by Theorem \ref{thm:splitting} there exist complexes $\Delta_1\subseteq \Sigma\Delta\setminus u=v\ast \Delta$ and $\Delta_2\subseteq \Sigma\Delta\setminus v=u\ast\Delta$ satisfying (i)-(v) in the theorem statement. But since $\Delta_1\cup \Delta_2 = \Sigma\Delta=v\ast\Delta\cup u\ast\Delta$, it must be the case that $\Delta_1 = v\ast \Delta$ and $\Delta_2 = u\ast \Delta$. Then $\Delta_1\cap \Delta_2 = \Delta$, and by (v) we conclude that $\Delta$ is $d$-convex union representable, a contradiction. 
\end{proof}

Note that Corollary \ref{cor:suspension} together with Corollary \ref{cor:collapsiblebutnotrepresentable} provides us with additional examples of collapsible complexes that are not convex union representable. In some situations, see Corolary \ref{cor:bound-dim} below, it also allows us to establish lower bounds on the minimum dimension of a convex union representation.

\section{Alexander duality} \label{sec:Alexanderdual}
Recall that if $\Delta$ is a simplicial complex with vertex set $[n]$, then the \emph{Alexander dual} of $\Delta$ is \[\Delta^* := \{\sigma\subseteq [n] \mid [n]\setminus\sigma\notin \Delta\}.\] The goal of this section is to show that if $\Delta$ is convex union representable and $n\geq 1$, then the Alexander dual of $\Delta$ is collapsible. It is worth mentioning that there exist collapsible complexes whose Alexander dual is not collapsible (see, for instance \cite[Example 3.3]{welker99}) --- such complexes thus provide additional examples of collapsible complexes that are not convex union representable. On the other hand, a complex is non-evasive if and only if its Alexander dual is non-evasive (see \cite{topologicalevasiveness} and \cite[Lemma 2.5]{welker99}). This suggests that convex union representability may imply non-evasiveness.

For our result we require the following standard lemma, which is proven in \cite{topologicalevasiveness}. 

\begin{lemma}\label{lem:dualcollapsible}
Let $\Delta\subseteq\Gamma$ be simplicial complexes. Then $\Gamma$ collapses onto $\Delta$ if and only if $\Delta^*$ collapses onto $\Gamma^*$. 
\end{lemma}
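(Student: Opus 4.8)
The statement to prove is Lemma \ref{lem:dualcollapsible}: for simplicial complexes $\Delta\subseteq\Gamma$, $\Gamma$ collapses onto $\Delta$ if and only if $\Delta^*$ collapses onto $\Gamma^*$. Since Alexander duality is an involution (on complexes with a fixed ground set $[n]$, one has $(\Delta^*)^* = \Delta$), and since $\Delta\subseteq\Gamma$ is equivalent to $\Gamma^*\subseteq\Delta^*$, it suffices to prove one direction: if $\Gamma$ collapses onto $\Delta$ then $\Delta^*$ collapses onto $\Gamma^*$. The other direction then follows by applying this to the pair $\Gamma^*\subseteq\Delta^*$ and using $(\Delta^*)^* = \Delta$, $(\Gamma^*)^* = \Gamma$. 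Moreover, a collapse is a finite sequence of elementary collapses, so by induction it is enough to handle a single elementary collapse.

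\textbf{The single-elementary-collapse step.} Suppose $\Gamma \to \Gamma\setminus\sigma$ is an elementary collapse induced by a free face $\sigma$, i.e.\ $\sigma$ is contained in a unique facet $\rho$ of $\Gamma$, and the removed faces are exactly the interval $[\sigma,\rho] = \{\tau : \sigma\subseteq\tau\subseteq\rho\}$ (here $\Gamma\setminus\sigma$ is the deletion, which removes all faces containing $\sigma$; since $\rho$ is the unique facet containing $\sigma$, these are precisely the faces $\tau$ with $\sigma\subseteq\tau\subseteq\rho$). I would track what happens under the complementation map $\tau\mapsto [n]\setminus\tau$. A set $\tau\in[n]$ lies in $\Gamma^*$ iff $[n]\setminus\tau\notin\Gamma$, and in $(\Gamma\setminus\sigma)^*$ iff $[n]\setminus\tau\notin\Gamma\setminus\sigma$. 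So $(\Gamma\setminus\sigma)^* \setminus \Gamma^*$ consists of exactly those $\tau$ for which $[n]\setminus\tau$ lies in $\Gamma$ but not in $\Gamma\setminus\sigma$, i.e.\ $[n]\setminus\tau\in[\sigma,\rho]$, i.e.\ $\tau$ lies in the interval $[\,[n]\setminus\rho,\ [n]\setminus\sigma\,]$. The plan is to show this is precisely the set of faces removed by an elementary collapse of $(\Gamma\setminus\sigma)^*$: namely, I claim $[n]\setminus\rho$ is a free face of $(\Gamma\setminus\sigma)^*$ whose unique facet is $[n]\setminus\sigma$, so that the elementary collapse $(\Gamma\setminus\sigma)^* \to (\Gamma\setminus\sigma)^*\setminus([n]\setminus\rho)$ removes exactly the interval above and lands on $\Gamma^*$. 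To verify this I would check: (a) $[n]\setminus\sigma \in (\Gamma\setminus\sigma)^*$ — equivalently $\sigma\notin\Gamma\setminus\sigma$, which is clear; (b) $[n]\setminus\rho\in(\Gamma\setminus\sigma)^*$ — equivalently $\rho\notin\Gamma\setminus\sigma$, clear since $\rho\supseteq\sigma$; (c) no facet of $(\Gamma\setminus\sigma)^*$ other than $[n]\setminus\sigma$ contains $[n]\setminus\rho$, which unwinds (via complementation) to: the only faces of $\Gamma$ minimal with respect to not being in $\Gamma\setminus\sigma$, i.e. contained in $\rho$ and containing... more carefully, a facet of $(\Gamma\setminus\sigma)^*$ containing $[n]\setminus\rho$ corresponds to an inclusion-minimal non-face of $\Gamma\setminus\sigma$ that is contained in $\rho$; the minimal non-faces of $\Gamma\setminus\sigma$ contained in $\rho$ are exactly $\{\sigma\}$ because of the freeness of $\sigma$ in $\Gamma$; and (d) $[n]\setminus\rho$ is not itself a facet of $(\Gamma\setminus\sigma)^*$, i.e.\ $\sigma$ is not a facet of $\Gamma$ — which holds since $\sigma$ is a free face (by definition not a facet).

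\textbf{Main obstacle.} The one genuinely delicate point is step (c): correctly dualizing the statement ``$\sigma$ is the unique facet-containing-$\sigma$'' into ``$[n]\setminus\rho$ has a unique facet of the dual above it,'' and in particular making sure one is comparing facets of $(\Gamma\setminus\sigma)^*$ against minimal non-faces of $\Gamma\setminus\sigma$ correctly (facets of $\Lambda^*$ are complements of minimal non-faces of $\Lambda$). I would state the elementary fact ``$\sigma'$ is a facet of $\Lambda^*$ iff $[n]\setminus\sigma'$ is an inclusion-minimal non-face of $\Lambda$'' as a preliminary observation, then the freeness of $\sigma$ (its only non-face ``witnesses'' inside $\rho$ being $\sigma$ itself) immediately gives uniqueness. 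With the single-step claim in hand, induction on the length of the collapse $\Gamma\to\Delta$ reverses the order of the elementary collapses to produce a collapse $\Delta^*\to\Gamma^*$, and the reverse implication follows from the involutivity of Alexander duality as noted above, completing the proof. Since this lemma is cited as ``proven in \cite{topologicalevasiveness},'' I would likely present only this sketch and refer the reader there for the routine verifications, but the argument above is self-contained.
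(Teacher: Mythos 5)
Your argument is correct. Note, though, that the paper does not prove Lemma \ref{lem:dualcollapsible} at all --- it is quoted as a standard fact with a pointer to \cite{topologicalevasiveness} --- so there is no in-paper proof to compare against; what you have written is the standard argument from that reference, namely that complementation carries the interval $[\sigma,\rho]$ removed by an elementary collapse of $\Gamma$ to the interval $[[n]\setminus\rho,[n]\setminus\sigma]$, which is exactly the set of faces removed by an elementary collapse of $(\Gamma\setminus\sigma)^*$ onto $\Gamma^*$ with free face $[n]\setminus\rho$, and then one reverses the order of the elementary collapses. Your step (c) is the right reduction (facets of $\Lambda^*$ are complements of inclusion-minimal non-faces of $\Lambda$), and the verification goes through: since $\rho\in\Gamma$, every subset of $\rho$ lies in $\Gamma$, so the non-faces of $\Gamma\setminus\sigma$ contained in $\rho$ are precisely the sets in $[\sigma,\rho]$, whose unique minimal element is $\sigma$. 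Two small points worth making explicit if you write this up in full: first, both duals must be taken with respect to the \emph{same} ground set $[n]$ (the vertex set of $\Gamma$), since a collapse can delete vertices and the involution $(\Delta^*)^*=\Delta$ you invoke for the converse direction only holds over a fixed ground set; second, in (a) you only check that $[n]\setminus\sigma$ is a face of $(\Gamma\setminus\sigma)^*$, but your identification in (c) already shows it is a facet (as $\sigma$ is a minimal non-face of $\Gamma\setminus\sigma$), which is what the elementary-collapse claim needs. The degenerate cases ($\sigma=\emptyset$, $\rho=[n]$, void versus empty complex) are also consistent with the paper's conventions under your argument.
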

\begin{corollary}\label{cor:dualcollapsible}
Let $\Delta$ be a simplicial complex with vertex set $[n]$. Then $\Delta^*$ is collapsible if and only if $2^{[n]}$ collapses onto $\Delta$.
\end{corollary}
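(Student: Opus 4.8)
The statement to prove is Corollary \ref{cor:dualcollapsible}: for a simplicial complex $\Delta$ with vertex set $[n]$, the Alexander dual $\Delta^*$ is collapsible if and only if $2^{[n]}$ collapses onto $\Delta$.

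\medskip

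The plan is to derive this as an immediate consequence of Lemma \ref{lem:dualcollapsible}, combined with two basic observations about Alexander duality: namely that $(2^{[n]})^* = \emptyset$ (the Alexander dual of the full simplex on $[n]$ is the void complex) and that $\emptyset^* = 2^{[n]}$ (the Alexander dual of the void complex is the full simplex). The first of these holds because $\sigma \in (2^{[n]})^*$ iff $[n]\setminus\sigma \notin 2^{[n]}$, but every subset of $[n]$ lies in $2^{[n]}$, so there are no such $\sigma$; the second holds because $\sigma \in \emptyset^*$ iff $[n]\setminus\sigma \notin \emptyset$, which is always true since $\emptyset$ has no faces. I should also recall that Alexander duality is an involution on complexes with ground set $[n]$, i.e. $(\Delta^*)^* = \Delta$, which follows directly from the definition.

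\medskip

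With these facts in hand, the argument runs as follows. First I would note that ``$\Delta^*$ is collapsible'' means precisely ``$\Delta^*$ collapses onto the void complex $\emptyset$,'' by the definition of collapsibility given in Section \ref{sec:preliminaries}. Now apply Lemma \ref{lem:dualcollapsible} with the pair $\emptyset \subseteq \Delta^*$ — here we must be slightly careful, since the lemma as stated concerns a containment $\Delta \subseteq \Gamma$, and we should check that the lemma and its proof in \cite{topologicalevasiveness} accommodate the void complex as the smaller term; this is standard. The lemma then says $\Delta^*$ collapses onto $\emptyset$ if and only if $\emptyset^*$ collapses onto $(\Delta^*)^*$. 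Substituting $\emptyset^* = 2^{[n]}$ and $(\Delta^*)^* = \Delta$, the right-hand condition becomes ``$2^{[n]}$ collapses onto $\Delta$,'' which is exactly the desired equivalence.

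\medskip

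I do not anticipate a serious obstacle here; the corollary is essentially a bookkeeping exercise translating the definition of collapsibility and the involutivity of Alexander duality through Lemma \ref{lem:dualcollapsible}. The one point deserving a sentence of care is the edge-case behavior at the void and full complexes — specifically confirming that $(2^{[n]})^* = \emptyset$ and $\emptyset^* = 2^{[n]}$ under the stated convention, and that Lemma \ref{lem:dualcollapsible} is applicable when one of the two complexes is void (a collapse onto $\emptyset$ being exactly what ``collapsible'' means). Once those conventions are spelled out, the two-line deduction above completes the proof.
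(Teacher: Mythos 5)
Your proof is correct and follows essentially the same route as the paper: both arguments are a single application of Lemma \ref{lem:dualcollapsible} together with the observation that the Alexander dual of the full simplex $2^{[n]}$ is the void complex. The only cosmetic difference is that the paper instantiates the lemma with the pair $\Delta \subseteq \Gamma = 2^{[n]}$ (so that $\Gamma^* = \emptyset$ appears on the dual side), whereas you instantiate it with the dual pair $\emptyset \subseteq \Delta^*$ and then invoke involutivity $(\Delta^*)^* = \Delta$; since the lemma is a biconditional, these are the same argument read in opposite directions.
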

\begin{proof}
Take $\Gamma = 2^{[n]}$ and use Lemma \ref{lem:dualcollapsible}, noting that $\Gamma^* = \emptyset$. 
\end{proof}

With Corollary \ref{cor:dualcollapsible} in hand, we are ready to prove the main result of this section:

\begin{theorem} \label{thm:dualcollapsible}
Let $\Delta$ be a convex union representable complex with $n\geq 1$ vertices. Then $\Delta^*$ is collapsible.
\end{theorem}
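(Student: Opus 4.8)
By Corollary~\ref{cor:dualcollapsible}, it suffices to prove that $2^{[n]}$ collapses onto $\Delta$. The plan is to realize this collapse geometrically using a $d$-convex union representation $\{U_i\}_{i=1}^n$ of $\Delta$. The key observation is that the full simplex $2^{[n]}$ is the nerve of a collection in which every intersection $U_\sigma$ is nonempty, so we want to enlarge our representation so that \emph{all} intersections become nonempty while still being able to ``track'' the collapse back down to $\Delta$. The natural vehicle for this is Theorem~\ref{thm:halfspace}: if we can build a convex union representation $\{U_i'\}_{i=1}^n$ whose nerve is $2^{[n]}$, together with a halfspace $H$ such that $\nerve(\{U_i'\cap H\}_{i=1}^n) = \Delta$, then Theorem~\ref{thm:halfspace} immediately gives the desired collapse of $2^{[n]}$ onto $\Delta$.

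\textbf{Carrying out the construction.} First I would invoke Proposition~\ref{prop:polytoperealization} to replace $\{U_i\}_{i=1}^n$ by a closed representation: polytopes $P_1,\dots,P_n$ in $\R^d$ whose union is a polytope $P$, with $\nerve(\{P_i\}) = \Delta$. Now embed $\R^d$ as the hyperplane $\{x_{d+1}=0\}$ inside $\R^{d+1}$, and fix a point $q = (0,\dots,0,1)$ above it. For each $i$, let $Q_i := \conv(P_i \cup \{q\})$, the cone over $P_i$ with apex $q$. Since all the $Q_i$ share the apex $q$, the intersection $Q_\sigma \supseteq \{q\}$ is nonempty for every $\sigma \subseteq [n]$, so $\nerve(\{Q_i\}) = 2^{[n]}$; and the union $\bigcup Q_i = \conv(P \cup \{q\})$ is a polytope, hence convex. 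Thus $\{Q_i\}$ (after passing to open sets via Minkowski sums with a small ball, as in the proof of Proposition~\ref{prop:polytoperealization}) is a $(d+1)$-convex union representation of $2^{[n]}$. Finally, take $H$ to be the closed halfspace $\{x_{d+1} \le 0\}$. Then $Q_i \cap H = P_i$ (as a subset of the hyperplane $\R^d \cong \partial H$), and a point of $Q_\sigma$ lies in $H$ only if it lies on the base, i.e. $Q_\sigma \cap H = P_\sigma$; hence $\nerve(\{Q_i \cap H\}) = \nerve(\{P_i\}) = \Delta$. Applying Theorem~\ref{thm:halfspace} to the representation $\{Q_i\}$ and the halfspace $H$ yields that $2^{[n]}$ collapses onto $\Delta$, and Corollary~\ref{cor:dualcollapsible} then gives that $\Delta^*$ is collapsible.

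\textbf{Technical points and the main obstacle.} Two small gaps need care. First, when I pass from the closed cones $Q_i$ to open convex sets (so as to fit Definition~\ref{def:convexunionrepresentable}), I must check that the Minkowski-sum fattening $Q_i + B_\varepsilon$ still has nerve $2^{[n]}$ and convex union — the former because the $Q_\sigma$ all contain a neighborhood of $q$, the latter automatically. But then $H$ must be replaced by a parallel halfspace pushed slightly in the $-x_{d+1}$ direction so that $(Q_i + B_\varepsilon) \cap H$ still recovers exactly the combinatorics of $\{P_i\}$; here I would instead fatten only enough that the base polytope's combinatorics is unaffected, or, more cleanly, apply Theorem~\ref{thm:halfspace} directly in the closed setting, since Theorem~\ref{thm:halfspace} is stated for both open and closed halfspaces and its proof reduces the closed case to the open one. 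The genuinely delicate point — and the one I expect to be the main obstacle — is ensuring $\nerve(\{Q_i \cap H\}_{i=1}^n) = \Delta$ exactly: one must verify that coning off to the apex $q$ does not accidentally create extra intersections \emph{on the base hyperplane}. Since $q \notin H$ and each $Q_i$ is the join of $P_i$ with $q$, any point of $Q_i$ in $H$ must actually lie in $P_i$; so $Q_\sigma \cap H = \bigcap_i (Q_i \cap H) = \bigcap_i P_i = P_\sigma$, and the equality of nerves is clean. The case $n=1$, where $2^{[n]}$ is a single vertex and $\Delta$ is either that vertex or $\{\emptyset\}$, should be checked separately but is immediate from the same argument (or noted as a degenerate instance already handled in the base case of Theorem~\ref{thm:halfspace}).
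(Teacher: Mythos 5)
Your proposal is correct and follows essentially the same route as the paper: reduce via Corollary~\ref{cor:dualcollapsible} to showing that $2^{[n]}$ collapses onto $\Delta$, lift the representation into $\R^{d+1}$ so that the nerve becomes the full simplex while the original nerve is recovered by slicing with a horizontal halfspace, and apply Theorem~\ref{thm:halfspace}. The only difference is cosmetic: you cone the $P_i$ to a single apex $q$ and must then re-open the sets by Minkowski fattening (a step you correctly flag and resolve), whereas the paper cones each open $U_i$ to a shifted copy of the whole union at height one, which keeps the sets open throughout and avoids that adjustment.
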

\begin{proof}
Let $\{U_i\}_{i=1}^n$ be a convex union representation of $\Delta$ in $\R^d$ such that the collection of closures $\{\overline{U_i}\}_{i=1}^n$ has nerve equal to $\Delta$, as guaranteed by Proposition \ref{prop:polytoperealization}. Embed $\{U_i\}$ into $\R^{d+1}$ on the hyperplane $x_{d+1} = 0$. Let $U = \bigcup_{i\in[n]} U_i$, and let $V$ be the shifted copy of $U$ contained in the hyperplane $x_{d+1} = 1$. Then for $i\in[n]$ define $V_i = \interior(\conv(U_i\cup V))$. This construction is illustrated below.
\[
\includegraphics[scale=0.60]{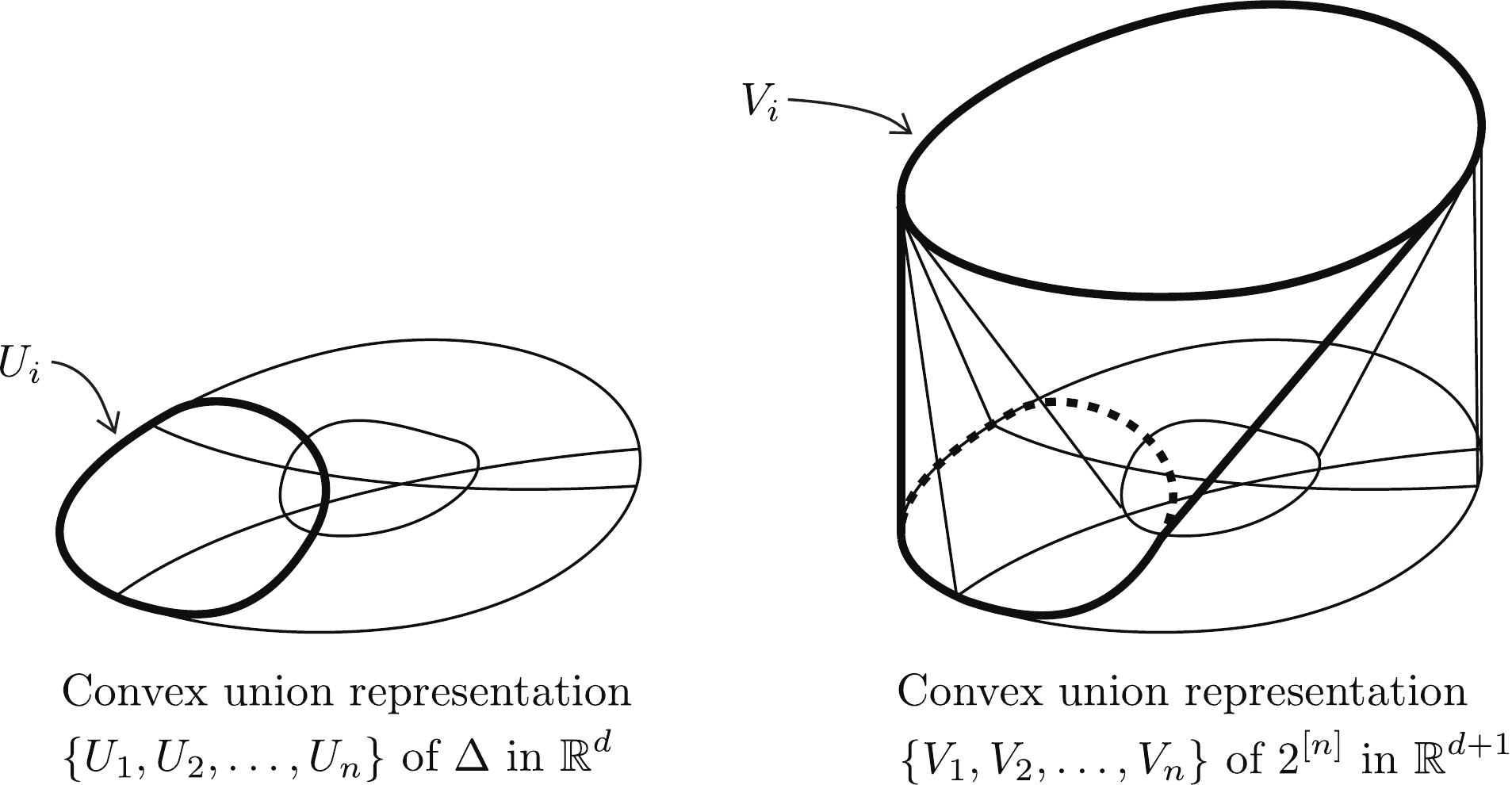}
\]

Observe that the nerve of $\{V_i\}_{i=1}^n$ is $2^{[n]}$. Since the $U_i$ were chosen such that their closures have the same nerve, it follows that for a sufficiently small $\varepsilon>0$, the halfspace $H_\varepsilon = \{(x_1,\ldots,x_d,x_{d+1})\in \R^{d+1}\mid x_{d+1}<\varepsilon\}$ has the property that  \[
\nerve(\{U_i\}) = \nerve(\{V_i\cap H_\varepsilon\}). 
\]
But by Theorem \ref{thm:halfspace}, this implies that $2^{[n]}$ collapses to $\Delta$. Corollary \ref{cor:dualcollapsible} then yields that $\Delta^*$ is collapsible. 
\end{proof}

\section{Convex union representable complexes with a few free faces} \label{sec:fewfreefaces}
In this section we bound the minimum dimension of a representation of a convex union representable complex $\Delta$ by the number of free faces of $\Delta$. More specifically, we establish the following result.

\begin{theorem} \label{thm:(k-1)rep}
If $\Delta$ is a convex union representable complex with $k$ facets that contain all free faces of $\Delta$, then $\Delta$ is $(k-1)$-convex union representable.
\end{theorem}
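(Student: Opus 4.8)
The plan is to induct on $k$, using Theorem~\ref{thm:splitting} to peel off one facet at a time while keeping track of dimension. For the base case $k=1$, a single facet $\sigma_0$ contains all free faces of $\Delta$. I would argue that this forces $\Delta=\dc{\sigma_0}$: by Corollary~\ref{cor:collapsetostar}, $\Delta$ collapses onto $\dc{\sigma_0}=\cstar_\Delta(\sigma_0)$ (note $\sigma_0$ is a facet, so its star is just the simplex on it); but if $\Delta\neq\dc{\sigma_0}$, this collapse uses an elementary collapse at some free face, and every such free face lies in $\sigma_0$, yet a free face of $\Delta$ lying in $\sigma_0$ cannot be collapsed without removing faces of $\dc{\sigma_0}$ — one needs to check carefully that a free face contained in the unique facet $\sigma_0$ is already a face of the target $\dc{\sigma_0}$, contradicting that the collapse is nontrivial. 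Hence $\Delta=\dc{\sigma_0}$, which is a simplex on $|\sigma_0|$ vertices and is visibly $0$-convex union representable (a single point, or more carefully a single convex open set in $\R^0$ when $|\sigma_0|=1$; for $|\sigma_0|\geq 2$ one still has $k=1$ only if there are no free faces other than those in $\sigma_0$, and a simplex $\dc{\sigma_0}$ is $0$-convex union representable by taking all $U_i$ to be the same point). I should double-check the degenerate cases (empty complex, single vertex) separately.

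For the inductive step with $k\geq 2$, I would pick two distinct facets $\sigma_1,\sigma_2$ among the $k$ facets containing all free faces. Since they are distinct facets, $\sigma_1\cup\sigma_2\notin\Delta$. Choose faces $\tau_1\subseteq\sigma_1$, $\tau_2\subseteq\sigma_2$ witnessing this — the cleanest choice is $\tau_i$ a minimal subset of $\sigma_i$ with $\tau_1\cup\tau_2\notin\Delta$; in fact taking $\tau_i=\sigma_i$ works since $\sigma_1\cup\sigma_2\notin\Delta$. Apply Theorem~\ref{thm:splitting} to get $\Delta_1\subseteq\Delta\setminus\sigma_1$ and $\Delta_2\subseteq\Delta\setminus\sigma_2$ with $\Delta=\Delta_1\cup\Delta_2$, each $\Delta_i$ being $d$-convex union representable, $\Delta$ collapsing onto each $\Delta_i$, and each $\Delta_i$ collapsing onto $\Delta_1\cap\Delta_2$ which is $(d-1)$-convex union representable. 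The key combinatorial point is then: since $\Delta$ collapses onto $\Delta_1$, every free face of $\Delta_1$ is a free face of $\Delta$ (a face that is free in $\Delta_1\subsetneq\Delta$ must, because the collapse $\Delta\to\cdots\to\Delta_1$ never touches it... actually one must be more careful here) — more robustly, I would argue that the facets of $\Delta_1$ containing free faces of $\Delta_1$ are among $\{$facets of $\Delta$ containing free faces of $\Delta\}$, minus $\sigma_1$, so $\Delta_1$ has at most $k-1$ such facets. By the induction hypothesis $\Delta_1$ is $(k-2)$-convex union representable (and so is $\Delta_2$).

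To finish, I would glue: since $\Delta_1$ is $(k-2)$-convex union representable, embed a representation in a hyperplane $\{x_{k-1}=0\}$ of $\R^{k-1}$ and "thicken" it to one side, and do the same for $\Delta_2$ on the other side, arranging that the two representations agree on the common hyperplane exactly on $\Delta_1\cap\Delta_2$; this is essentially the reverse of the splitting construction, and the union of the two thickened families gives a $(k-1)$-convex union representation of $\Delta_1\cup\Delta_2=\Delta$. Alternatively, and perhaps more cleanly, since $\Delta_1\cap\Delta_2$ is $(d-1)$-convex union representable but we want to control dimension in terms of $k$, I would instead directly take the $(k-2)$-representations of $\Delta_1$ and $\Delta_2$, form their "cylinder over a common base" — here the delicate point is that the restriction of the $\Delta_1$-representation to the separating hyperplane must equal the restriction of the $\Delta_2$-representation, which requires an additional argument that both restrict to (or can be modified to restrict to) the same representation of $\Delta_1\cap\Delta_2$.

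\textbf{Main obstacle.} I expect the hard part to be the gluing step: producing a \emph{single} $(k-1)$-convex union representation of $\Delta$ from the inductively-obtained representations of $\Delta_1$ and $\Delta_2$. The difficulty is that the two representations must be made compatible along a common separating hyperplane — their traces on that hyperplane must coincide as representations of $\Delta_1\cap\Delta_2$ — and one must simultaneously ensure that the union of all the thickened sets stays convex (no holes), that no spurious intersections are created, and that the dimension count is exactly $k-1$ rather than $k$. A secondary, more technical obstacle is the bookkeeping in the induction hypothesis: one must verify precisely that passing from $\Delta$ to $\Delta_i$ does not increase the number of facets carrying free faces, which requires understanding how free faces behave under the collapses furnished by Theorem~\ref{thm:splitting}.
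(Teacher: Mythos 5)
Your proposal takes a genuinely different route from the paper, and it has two gaps that I do not think can be repaired along the lines you sketch. First, the inductive bookkeeping fails: after splitting, the complex $\Delta_1$ need not have its free faces confined to $k-1$ of the original facets. Since $\sigma_1\notin\Delta_1$, proper faces of $\sigma_1$ may become facets of $\Delta_1$, and more generally a collapse $\Delta\to\Delta_1$ routinely creates new free faces (collapsing one triangle of two triangles glued along an edge turns the shared edge into a free face of the result); these new free faces need not lie in any of $\sigma_2,\ldots,\sigma_k$. So the hypothesis of the induction is not known to hold for $\Delta_1$ and $\Delta_2$, and the count $k-1$ is unsubstantiated. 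Second, and more seriously, the gluing step is not a technical nuisance but an unproved converse to Theorem \ref{thm:splitting}: the $(k-2)$-representations of $\Delta_1$ and $\Delta_2$ supplied by induction are completely unrelated, and neither the paper nor your sketch provides a mechanism for forcing their traces on a common hyperplane to coincide as a representation of $\Delta_1\cap\Delta_2$ while keeping the union of the thickened sets convex and the ambient dimension equal to $k-1$. Without such a lemma the induction cannot close.

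The irony is that your base-case argument already contains the key idea, and it generalizes directly with no induction and no gluing. Fix a convex union representation $\{U_i\}$ of $\Delta$, pick one point $p_j\in U_{\sigma_j}$ for each of the $k$ distinguished facets, and let $C=\conv(\{p_1,\ldots,p_k\})$, a polytope of dimension at most $k-1$. By Corollary \ref{cor:collapsetoconvexset}, $\Delta$ collapses onto $\nerve(\{U_i\cap C\})$, and by the choice of the $p_j$ this nerve still contains every $\sigma_j$, hence every free face of $\Delta$. Exactly as in your $k=1$ argument, a collapse whose target contains all free faces of the source must be trivial, so $\Delta=\nerve(\{U_i\cap C\})$; taking relative interiors inside the affine span of $C$ yields a convex union representation of $\Delta$ in dimension at most $k-1$. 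This one-step argument is the paper's proof.
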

\begin{proof}
Let $\{U_i\}_{i=1}^n$ be a convex union representation of $\Delta$, and let $\sigma_1,\ldots, \sigma_k$ be the facets of $\Delta$ containing all free faces. Choose points $p_i\in U_{\sigma_i}$, and let $C = \conv(\{p_1,\ldots, p_k\})$. By Corollary \ref{cor:collapsetoconvexset}, $\Delta$ collapses to $\nerve(\{U_i\cap C\})$, but by choice of the $p_i$ the nerve $\nerve(\{U_i\cap C\})$ contains all free faces of $\Delta$. Thus $\Delta = \nerve(\{U_i\cap C\})$. Since $C$ is the convex hull of $k$ points, it is contained in an affine subspace of dimension no larger than $k-1$. Taking relative interiors in this affine subspace yields a convex union representation of $\Delta$ in dimension no larger than $k-1$, proving the result. 
\end{proof}

Two immediate consequences of Theorem \ref{thm:(k-1)rep} are

\begin{corollary}\label{cor:leray}
If $\Delta$ is a convex union representable complex with $k$ facets that contain all free faces of $\Delta$, then $\Delta$ is $(k-1)$-representable. In particular it is $(k-1)$-Leray. 
\end{corollary}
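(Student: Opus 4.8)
The plan is to derive this as a direct consequence of Theorem~\ref{thm:(k-1)rep} together with the two standard facts about representability recalled in Section~\ref{sec:preliminaries}: that every $d$-convex union representable complex is in particular $d$-representable (immediate from the definition, since a convex union representation is a family of convex sets), and that every $d$-representable complex is $d$-Leray (fact (ii) in the discussion preceding Definition~\ref{def:convexunionrepresentable}).

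Concretely, first I would invoke Theorem~\ref{thm:(k-1)rep} to conclude that $\Delta$ is $(k-1)$-convex union representable, so there is a collection $\{U_i\}_{i=1}^n$ of convex open sets in $\R^{k-1}$ with $\Delta = \nerve(\{U_i\})$. Dropping the requirements that the $U_i$ be open and that their union be convex, this is already a witness that $\Delta$ is $(k-1)$-representable. Then the chain ``$(k-1)$-representable $\Rightarrow$ $(k-1)$-Leray'' finishes the proof of the corollary.

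There is essentially no obstacle here: the entire content has been pushed into Theorem~\ref{thm:(k-1)rep}, and the corollary is just unwinding definitions plus citing a known implication. The only point to be slightly careful about is the degenerate small-$k$ cases — e.g. if $k=1$ then $\Delta$ is $0$-convex union representable, hence a single point (or the void complex), which is trivially $0$-representable and $0$-Leray — but these cause no trouble. Thus the proof is a one- or two-line deduction, and I would write it as such.

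\begin{proof}
By Theorem~\ref{thm:(k-1)rep}, $\Delta$ is $(k-1)$-convex union representable; in particular $\Delta$ is the nerve of a collection of convex sets in $\R^{k-1}$, so $\Delta$ is $(k-1)$-representable. Since every $d$-representable complex is $d$-Leray, $\Delta$ is $(k-1)$-Leray.
\end{proof}
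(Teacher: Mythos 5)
Your proof is correct and is exactly the deduction the paper intends: the paper presents this corollary as an ``immediate consequence'' of Theorem~\ref{thm:(k-1)rep} with no further argument, relying on precisely the chain $(k-1)$-convex union representable $\Rightarrow$ $(k-1)$-representable $\Rightarrow$ $(k-1)$-Leray that you spell out. No issues.
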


\begin{corollary}
Let $\Delta$ be a $d$-dimensional collapsible complex. Suppose that $\Delta$ has only $d$ or fewer free faces. Let $\Delta'$ be a stellar subdivision of $\Delta$ at one of its $d$-dimensional faces. Then $\Delta'$ is collapsible but not convex union representable. 
\end{corollary}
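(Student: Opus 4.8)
The plan is to verify the two claims — collapsibility of $\Delta'$ and non-representability of $\Delta'$ — separately, with the second being the real content. Throughout, write $\tau$ for the $d$-dimensional face of $\Delta$ at which we stellarly subdivide, and $w$ for the new vertex introduced by the subdivision, so that $\Delta' = (\Delta \setminus \tau) \cup (w \ast \partial\dc{\tau} \ast \link_\Delta(\tau))$.

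For collapsibility, I would recall the standard fact that a stellar subdivision of a collapsible complex is collapsible: one can collapse $\Delta'$ back toward $\Delta$ by first collapsing the ``new'' cone part. Concretely, $\cstar_{\Delta'}(w) = w \ast \partial\dc\tau \ast \link_\Delta(\tau)$, which is a cone hence collapsible; collapsing it onto $\partial\dc\tau\ast\link_\Delta(\tau) = \link_\Delta(\tau)\ast\partial\dc\tau$ (a complex that is a subcomplex of $\Delta$) realizes an elementary-collapse sequence from $\Delta'$ onto $\Delta$, and then one continues with the collapse of $\Delta$ onto $\emptyset$. One must be slightly careful that the free faces used in collapsing the star of $w$ remain free faces in all of $\Delta'$ (not just in the star), but since $w$ appears only in $\cstar_{\Delta'}(w)$ this is routine. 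Thus $\Delta'$ is collapsible.

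For the non-representability, the key observation is that the free faces of $\Delta'$ are highly constrained. First I would argue that every free face of $\Delta$ is either $\tau$ itself or is disjoint from the ``interior'' of $\tau$ in a suitable sense, and then track what happens to free faces under the subdivision. The crucial point: subdividing at the single $d$-face $\tau$ replaces the unique facet $\tau$ by several new facets, each of the form $\{w\}\cup(\tau\setminus\{v\})\cup\rho$ with $v\in\tau$ and $\rho\in\link_\Delta(\tau)$; in particular the subdivision does \emph{not} create genuinely new free faces beyond these new facets, and any free face of $\Delta'$ either was a free face of $\Delta$ not affected by the subdivision, or lies inside one of the new facets (and hence contains $w$ or is contained in $\tau\cup\rho$). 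Counting: $\Delta$ has at most $d$ free faces; I want to conclude $\Delta'$ has at most $d$ free faces as well (the subdivision turns the one facet $\tau$, which may or may not have been "near" a free face, into new structure but does not increase the count past $d$ — here the hypothesis that $\Delta$ is $d$-dimensional with $\le d$ free faces is used to control the new faces of $\link$-type). Once we know $\Delta'$ has at most $d$ free faces, Theorem~\ref{thm:(k-1)rep} forces that if $\Delta'$ were convex union representable it would be $(d-1)$-representable, hence $(d-1)$-Leray by Corollary~\ref{cor:leray}. But $\Delta'$ is $d$-dimensional and contains the $(d-1)$-sphere $\partial\dc\tau$ as the link of an appropriate face (namely $\link_{\Delta'}(\{w\}\cup\rho)\supseteq \partial\dc\tau$ for $\rho$ a facet of $\link_\Delta(\tau)$, after checking this link is exactly $\partial\dc\tau$), and $\partial\dc\tau\cong S^{d-1}$ has non-vanishing $(d-1)$st homology, contradicting $(d-1)$-Lerayness. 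Alternatively — and perhaps more cleanly — one shows directly that $\Delta'$ is not $(d-1)$-Leray because the subdivision creates a minimal non-face whose induced subcomplex is a $(d-1)$-sphere.

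The main obstacle I anticipate is the bookkeeping in the free-face count: one must show that stellarly subdividing $\Delta$ at $\tau$ produces a complex with no more free faces than $\Delta$ had, and this needs the dimension hypothesis. The subtlety is that a face $\rho\cup(\tau\setminus v)$ could a priori become free in $\Delta'$ even if nothing comparable was free in $\Delta$; ruling this out — or bounding how many such faces can simultaneously be free — is where the condition ``$\Delta$ is $d$-dimensional with at most $d$ free faces'' does the work, presumably because a free face of $\Delta'$ of the form $\{w\}\cup\rho'$ inside the cone forces a corresponding facet-adjacency pattern in $\link_\Delta(\tau)$ that is bounded by $\dim\tau = d$. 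If this count proves delicate, the fallback is to bound the number of facets containing all free faces (which is all Theorem~\ref{thm:(k-1)rep} really needs) rather than the free faces themselves, and then run the Leray/sphere contradiction as above.
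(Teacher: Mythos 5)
Your strategy for the non-representability half --- bound the free faces of $\Delta'$, feed that into Theorem \ref{thm:(k-1)rep}/Corollary \ref{cor:leray} to get $(d-1)$-Lerayness, and contradict this with the induced $(d-1)$-sphere $\partial\dc{\tau}$ --- is exactly the paper's argument. But the write-up has one genuine error and one gap that you flag but do not close, and both trace back to the same missed observation: since $\Delta$ is $d$-dimensional, a $d$-dimensional face $\tau$ is automatically a \emph{facet}, so $\link_\Delta(\tau)=\{\emptyset\}$ and $\Delta'=(\Delta\setminus\tau)\cup(w\ast\partial\dc{\tau})$ with new facets exactly $\{w\}\cup(\tau\setminus\{v\})$, $v\in\tau$.

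The error is in the collapsibility argument. You propose to collapse $\cstar_{\Delta'}(w)=w\ast\partial\dc{\tau}$ onto its base $\partial\dc{\tau}$ and thereby ``collapse $\Delta'$ onto $\Delta$.'' Neither step is possible. A collapse is a homotopy equivalence, and the contractible cone $w\ast\partial\dc{\tau}$ cannot collapse onto the $(d-1)$-sphere $\partial\dc{\tau}$; globally, the complex you would reach is $\Delta\setminus\tau$, and Mayer--Vietoris applied to $\Delta=(\Delta\setminus\tau)\cup\dc{\tau}$ shows $\tilde H_{d-1}(\Delta\setminus\tau)\neq 0$, so the contractible $\Delta'$ cannot collapse onto it. (Also, ``$\Delta'$ collapses onto $\Delta$'' is not even well-posed, since $\tau\in\Delta$ but $\tau\notin\Delta'$.) The paper simply cites the standard fact that stellar subdivision at a facet preserves collapsibility; a correct proof interleaves the removal of the new faces with the collapsing sequence of $\Delta$ (pairing $\rho$ with $\{w\}\cup\rho$ at the step where the original sequence removes the interval between a free face and $\tau$), rather than peeling off the star of $w$ first.

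The gap is the free-face count, which you identify as the main obstacle and leave open, anticipating that the hypothesis ``at most $d$ free faces'' must control new free faces ``of link-type.'' Once $\tau$ is recognized as a facet this is immediate and needs no such control: (i) no face containing $w$ is ever free, because $\{w\}\cup\rho$ with $\rho\subsetneq\tau$ lies in the $|\tau\setminus\rho|$ facets $\{w\}\cup(\tau\setminus\{v\})$, $v\in\tau\setminus\rho$, and is itself a facet precisely when this number is $1$; (ii) any face not containing $w$ lies in at least as many facets of $\Delta'$ as of $\Delta$, since the lone facet $\tau$ is replaced above each proper subset of $\tau$ by at least one new facet. Hence the free faces of $\Delta'$ form a subset of those of $\Delta$, giving the bound $k\le d$ directly. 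Your concluding Leray contradiction is correct; prefer your ``alternative'' formulation (the induced subcomplex $\Delta'|_\tau=\partial\dc{\tau}$ has nonvanishing $\tilde H_{d-1}$), which is the paper's and uses only the definition of Lerayness, over the version phrased in terms of links, which requires the additional fact that Leray numbers do not increase under passing to links.
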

\begin{proof} Note that a stellar subdivision at a facet does not affect collapsibility nor the collection of free faces.
Let $\sigma\in\Delta$ be the $d$-face at which the subdivision occurs. Then $\Delta'$ will have the boundary of $\dc\sigma$ as an induced subcomplex. This boundary has nonvanishing $(d-1)$-homology, and so $\Delta'$ is not $(d-1)$-Leray. On the other hand, $\Delta'$ has only $d$ or fewer free faces. Thus by Corollary \ref{cor:leray}, $\Delta'$ is not convex union representable. 
\end{proof}

The following corollary of Theorem \ref{thm:(k-1)rep} characterizes convex union representable complexes that have at most two free faces; in particular, it provides a different proof of Corollary~\ref{cor:collapsiblebutnotrepresentable}.  

\begin{corollary}\label{cor:threefreefaces}
Let $\Delta$ be a convex union representable complex. Then $\Delta$ has at most two free faces if and only if $\Delta$ is a path. 
\end{corollary}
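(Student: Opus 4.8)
The plan is to prove both implications, the reverse one being routine and the forward one resting on Theorem~\ref{thm:(k-1)rep} together with a combinatorial analysis of $1$-convex union representable complexes. For the reverse direction: if $\Delta$ is a path, then all its maximal faces are edges, every edge is a facet, and the non-facet faces contained in a unique facet are exactly the degree-$1$ vertices, so a path on at least three vertices has precisely two free faces. (I would treat the single vertex and the single edge as separate degenerate cases; depending on whether $\emptyset$ is counted as a free face these are borderline, and I would read ``path'' in the statement accordingly. In any case it is only the forward direction that is needed for the application to the high-dimensional complexes $\Sigma_d,E_d$ of Corollary~\ref{cor:collapsiblebutnotrepresentable}.)

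For the forward direction, suppose $\Delta$ is convex union representable with at most two free faces. If $\Delta$ is a simplex it must be a single vertex, since the $1$-simplex already has three free faces and higher simplices more. Otherwise $\Delta$ is not a simplex, so $\emptyset$ is not a free face and each free face lies in a unique facet; hence at most two facets of $\Delta$ contain all its free faces, and Theorem~\ref{thm:(k-1)rep} gives that $\Delta$ is $1$-convex union representable. Fix a representation of $\Delta$ by open intervals $U_1,\dots,U_n\subseteq\R$ whose union is an interval. Since a finite collection of intervals in $\R$ has the Helly property, $\Delta$ is the clique complex of the graph $G$ on $[n]$ with $ij\in E(G)\iff U_i\cap U_j\ne\emptyset$, and $G$ is connected because its intervals cover a connected set. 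I would then invoke the standard structure of connected interval graphs: the maximal cliques of $G$ --- equivalently the facets of $\Delta$ --- admit a linear order $C_1,\dots,C_k$ so that for each vertex $v$ the set $\{m : v\in C_m\}$ is an interval of indices; in particular $C_i\cap C_l\subseteq C_m$ whenever $i<m<l$.

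The heart is a counting step on this clique path. For a facet $C_m$ let $P_m$ be the set of faces of $\Delta$ whose unique facet is $C_m$ (so $C_m\in P_m$); since a free face lies in exactly one facet, the number of free faces of $\Delta$ equals $\sum_{m=1}^{k}(|P_m|-1)$, a sum of nonnegative integers, and the clique-path property gives $P_m=\{\rho\subseteq C_m : \rho\not\subseteq C_m\cap C_{m-1}\ \text{and}\ \rho\not\subseteq C_m\cap C_{m+1}\}$ (with $C_0=C_{k+1}=\emptyset$). Suppose $\Delta$ has a face of dimension at least $2$; then some facet $C_m$ has $|C_m|\geq 3$. If $C_m$ is an end clique, fixing a vertex of $C_m$ outside its unique neighboring clique puts all $2^{|C_m|-1}\geq 4$ subsets of $C_m$ containing that vertex into $P_m$, so $|P_m|-1\geq 3$, contradicting the hypothesis. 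If instead every facet of size $\geq 3$ is internal, then $C_1$ and $C_k$ must have size exactly $2$ (size $1$ is impossible as $G$ is connected with at least two facets), whence $|P_1|-1=|P_k|-1=1$, while an internal facet $C_m$ with $|C_m|\geq 3$ has $|P_m|\geq 2$ (it contains an explicit private non-facet face: a vertex lying in no neighboring clique, or --- if $C_m=(C_m\cap C_{m-1})\cup(C_m\cap C_{m+1})$ --- a two-element face with one vertex on each side), so $\sum_m(|P_m|-1)\geq 1+1+1=3$, again a contradiction. Therefore $\Delta$ is at most $1$-dimensional; by Corollary~\ref{cor:tree} it is a tree, and a tree on at least three vertices with at most two free faces has at most two leaves and hence is a path. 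The main obstacle is precisely this counting step --- showing that a facet of size $\geq 3$ always contributes a private non-facet face, and at least three of them when it is an end clique, together with a clean organization of the case split; a secondary point is the reliance on the clique-path description of interval graphs, which a fully self-contained argument would have to either cite or replace by an ad hoc analysis of the interval representation.
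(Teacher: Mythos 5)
Your proof is correct, and its overall architecture is the same as the paper's: both directions reduce the forward implication to dimension one via the free-face bound (you invoke Theorem~\ref{thm:(k-1)rep} directly where the paper uses its consequence, Corollary~\ref{cor:leray}), and both then lean on the same structural fact about $1$-representable complexes, namely the consecutive ordering of maximal cliques of an interval graph (property (*) in the paper, cited there as \cite[Theorem 7.1]{intervalgraphs}). Where you genuinely diverge is the endgame. The paper runs an induction on the number of facets, peeling off the last clique $F_m$, showing it must be an edge, and passing to $\Gamma\setminus v$; you instead do a single global count, partitioning the free faces by their unique containing facet and showing $\sum_m(|P_m|-1)\geq 3$ whenever some clique has three or more vertices, splitting on whether that clique is an end clique or internal. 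Your identification of $P_m$ as the subsets of $C_m$ contained in neither $C_m\cap C_{m-1}$ nor $C_m\cap C_{m+1}$ is exactly right (it uses the consecutive-ones property to reduce "contained in some other facet" to "contained in a neighboring one"), and the three subcases (end clique of size $\geq 3$; internal clique with a private vertex; internal clique covered by its two neighbors) are exhaustive and each produce the needed free faces. The trade-off is that the paper's induction is shorter to write but hides the counting in the inductive hypothesis, whereas your count is more explicit and non-inductive at the price of the end-versus-internal case analysis. One small point in your favor: you are more careful than the paper about the convention that $\emptyset$ is a free face of any nonempty simplex, which makes the single edge a slightly awkward boundary case for the statement as written; this is a defect of the statement's phrasing, not of either proof.
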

\begin{proof} 
One direction is clear: paths are $1$-convex union representable complexes that have at most two free faces. For the other direction,
assume that $\Delta$ is a convex union representable complex with at most two free faces. Then by Corollary \ref{cor:leray}, $\Delta$ is $1$-representable. Thus to prove the result, it suffices to show that if $\Delta$ is a collapsible, $1$-representable complex with at most two free faces, then $\Delta$ is a path.

The class of $1$-representable complexes, also known as clique complexes of interval graphs,  is well understood. In particular, \cite[Theorem 7.1]{intervalgraphs} asserts that $\Gamma$ is $1$-representable if and only if $\Gamma$ satisfies the following property (*): the facets of $\Gamma$ can be numbered $F_1,\ldots, F_m$ in such a way that for all pairs $(i,k)$ with $1\leq i <k\leq m$ and for any vertex $v\in\Gamma$, if $v\in F_i \cap F_k$, then $v$ belongs to \emph{all} $F_j$ with $i<j<k$.  

We use induction on $m$ to show that any collapsible $\Gamma$ that satisfies (*) and has at most two free faces must be a path.
In the base case of $m=1$, $\Gamma$ is a simplex $\overline{F_1}$, in which case the result is immediate: indeed, $0$- and $1$-dimensional simplices form a path, while a simplex of dimension $d\geq 2$ has more than two free faces. 

For the case of $m>1$, let $v$ be a vertex that belongs to $F_m$, but not to $F_{m-1}$. (It exists since $F_m$ is a facet, and hence it is not a subset of $F_{m-1}$.) Similarly, let $w\in F_1\setminus F_2$. Since $\Gamma$ satisfies (*), $F_m$ is the only facet that contains $v$. Consequently, $v\neq w$, and any proper subset of $F_m$ that contains $v$ is a free face of $\Gamma$. By a symmetric argument, $w$ is a free face of $\Gamma$. Thus, for $\Gamma$ to have at most two free faces, $F_m$ must be $1$-dimensional. We write $F_m=\{v,u\}$ and consider $\Gamma'=\Gamma\setminus v=\bigcup_{i=1}^{m-1}\overline{F_i}$. Then $\Gamma'$ is collapsible and satisfies property (*). Can it happen that $\Gamma'$ has more than two free faces? If yes, then at least two of these free faces are not $\{u\}$; hence they are also free faces of $\Gamma$, which together with the face $\{v\}$ already accounts for three free faces of $\Gamma$, contradicting our assumption. Thus $\Gamma'$ has at most two free faces, and hence $\Gamma'$ is a path by inductive hypothesis. We conclude that $\Gamma=\Gamma' \cup \overline{\{u,v\}}$ is a $1$-dimensional collapsible complex, so a tree.  The result follows since every tree that is not a path has at least three leaves, and each leaf is a free face. 
\end{proof}

\section{Constructions} \label{sec:constructions}

In this section we discuss some sufficient conditions for convex union representability. We start with the cones and joins.

\begin{remark}
Every simplicial complex $\Delta$ with vertex set $[n]$ is the nerve of a collection $\{U_i\}_{i=1}^n$ of convex open sets (whose union is not necessarily convex). Indeed, every simplicial complex has a representation $\{A_i\}_{i=1}^n$ consisting of compact convex sets (in fact polytopes, see \cite[Section 3.1]{tancer}). Compactness implies that disjoint regions $A_\sigma$ and $A_\tau$ have positive distance between them, so we can replace each $A_i$ by its Minkowski sum with a small open ball without affecting the nerve. 
\end{remark}

\begin{proposition}\label{prop:cone}
Every cone is convex union representable. 
\end{proposition}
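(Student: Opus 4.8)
The plan is to write down an explicit convex union representation of a cone $v \ast \Delta$ directly, starting from any representation of the base $\Delta$. Suppose $\Delta$ has vertex set $[n]$ and let $\{A_1, \dots, A_n\}$ be a collection of compact convex sets in $\R^d$ with $\nerve(\{A_i\}) = \Delta$ (such a collection exists by the remark just quoted, and compactness will be convenient). By translating we may assume all the $A_i$ lie in the open halfspace $\{x_1 > 0\}$ of $\R^d$, and that they are contained in a large ball. We will realize $v \ast \Delta$ in $\R^d$ as well, using $n+1$ open sets.

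The idea is to take $P := \conv\big(\bigcup_{i=1}^n A_i\big)$, a compact convex set, and let $U_0$ be a slightly enlarged open neighborhood of $P$ — say $U_0 = \interior(P) + B_\varepsilon$ for small $\varepsilon$, where $B_\varepsilon$ is the open $\varepsilon$-ball — so that $U_0$ is convex, open, and contains every $A_i$. For $i \in [n]$, set $U_i := A_i + B_\delta$ for $\delta$ small enough (smaller than half the minimum distance between disjoint regions $A_\sigma, A_\tau$) that $\nerve(\{U_i\}_{i=1}^n) = \Delta$; shrinking $\delta$ further we can also ensure each $U_i \subseteq U_0$. Then $\bigcup_{i=0}^{n} U_i = U_0$ is convex and open, which is requirement (i) of Definition \ref{def:convexunionrepresentable}. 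For the nerve: for $\sigma \subseteq [n]$ we have $U_\sigma \neq \emptyset$ iff $\sigma \in \Delta$, and since every $U_i \subseteq U_0$, adding the index $0$ to any face $\sigma \in \nerve(\{U_i\}_{i=1}^n) \cup \{\emptyset\}$ keeps the intersection nonempty. Hence $\nerve(\{U_i\}_{i=0}^n) = \{ \sigma \cup \tau \mid \sigma \in \{0\}\text{-down-closure}, \tau \in \Delta\} = \dc{v} \ast \Delta = v \ast \Delta$, with the index $0$ playing the role of the apex $v$. This gives a $d$-convex union representation of $v \ast \Delta$, so $v \ast \Delta$ is convex union representable.

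There is one degenerate case to handle at the start: if $\Delta$ is void (has no faces), then $v \ast \Delta$ is the single vertex $\{v\}$, which is realized by a single open ball; and if $\Delta = \{\emptyset\}$ — but recall a nerve is never $\{\emptyset\}$, and $v \ast \{\emptyset\} = \dc{v}$ is again a single vertex, handled the same way. When $\Delta$ has at least one vertex, the argument above applies verbatim. I expect no real obstacle here: the only thing requiring a line of care is verifying that $U_0$ can be chosen convex, open, containing all the $U_i$, and equal to their union — this is immediate once we define $U_0$ as an open neighborhood of $\conv(\bigcup A_i)$ and pick $\delta < \varepsilon$ so that $A_i + B_\delta \subseteq \interior(P) + B_\varepsilon$. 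The verification that the nerve is exactly $v \ast \Delta$ is a routine unwinding of the definition of join and the convention $U_\emptyset = \conv(\bigcup U_i)$.

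(An alternative, even softer, route: once we know every simplicial complex has a representation by convex open sets — not necessarily with convex union — one can cone off by adding one big open convex set containing all of them; this is essentially the same construction phrased without reference to Minkowski sums. Either way the proposition is a short direct construction rather than an application of the machinery of Sections \ref{sec:collapsiblenotCUR}--\ref{sec:fewfreefaces}.)
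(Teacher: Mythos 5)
Your construction is correct and is essentially the paper's proof: the paper also starts from a $d$-representation of $\Delta$ by convex open sets and adjoins one large convex open set for the apex, simply taking $U_v=\R^d$ so that the union is trivially convex and open (your ``even softer route''). The only nit is that $\interior(P)+B_\varepsilon$ could be empty when $P$ is lower-dimensional, so you should write $U_0=P+B_\varepsilon$ (or just take $U_0=\R^d$), after which everything goes through.
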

\begin{proof}
Consider a cone $v\ast\Delta$. Choose a $d$-representation of $\Delta$ by convex open sets. Adding $U_v = \R^d$ to this $d$-representation yields a $d$-convex union representation of $v\ast\Delta$.
\end{proof}

\begin{remark}
Proposition \ref{prop:cone} and the fact that all convex union representable complexes are collapsible and hence $\mathbb{Q}$-acyclic leads to the following set of (strict) implications:
\[
\mbox{ cone } \Longrightarrow \mbox{ convex union representable } \Longrightarrow \mbox{ $\mathbb{Q}$-acyclic}.
\]
Since the set of $f$-vectors of the class of simplicial complexes that are cones coincides with set of $f$-vectors of the class of $\mathbb{Q}$-acyclic simplicial complexes \cite{f-acyclic}, it follows that it also coincides with the set of $f$-vectors of the class of convex-union representable complexes. 
\end{remark}

\begin{proposition}\label{prop:join}
Let $\Delta$ be a $d_1$-convex union representable complex, and let $\Gamma$ be a $d_2$-representable simplicial complex. Then $\Delta\ast \Gamma$ is $(d_1+d_2)$-convex union representable.
\end{proposition}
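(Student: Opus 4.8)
The plan is to take a $d_1$-convex union representation $\{U_i\}_{i=1}^{n_1}$ of $\Delta$ with $\bigcup U_i$ convex open, and a $d_2$-representation $\{W_j\}_{j=1}^{n_2}$ of $\Gamma$ by convex sets in $\R^{d_2}$, and combine them into a representation in $\R^{d_1}\times\R^{d_2}$. The natural guess is to set, for a vertex $i$ of $\Delta$, the set $U_i\times\R^{d_2}$, and for a vertex $j$ of $\Gamma$, the set $(\bigcup_k U_k)\times W_j$. First I would check the nerve: a subset $\sigma\cup\rho$ with $\sigma$ among the $\Delta$-vertices and $\rho$ among the $\Gamma$-vertices yields the intersection $\big(\bigcap_{i\in\sigma}U_i\cap\bigcup_k U_k\big)\times\big(\bigcap_{j\in\rho}W_j\big) = U_\sigma\times W_\rho$ (using that $U_\sigma\subseteq\bigcup_k U_k$), which is nonempty precisely when $\sigma\in\Delta$ and $\rho\in\Gamma$, i.e. precisely when $\sigma\cup\rho\in\Delta\ast\Gamma$. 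So the nerve is correct.

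Next I would verify the two defining conditions. Openness of each set is immediate: $U_i$ is open in $\R^{d_1}$, and $\bigcup_k U_k$ is open while $W_j$ can be taken open in $\R^{d_2}$ (by the Remark preceding Proposition~\ref{prop:cone}, $\Gamma$ has a representation by convex open sets, and openness of the $W_j$ is all we need here since we do not require their union to be convex). Convexity of each set is clear since products of convex sets are convex. The remaining point is that the union of all the sets is convex and open. Here I would observe that $\bigcup_i\big(U_i\times\R^{d_2}\big) = \big(\bigcup_k U_k\big)\times\R^{d_2}$, which already contains every set of the form $\big(\bigcup_k U_k\big)\times W_j$; hence the total union is exactly $\big(\bigcup_k U_k\big)\times\R^{d_2}$, a product of a convex open set with $\R^{d_2}$, so it is convex and open.

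One should handle degenerate cases cleanly: if $\Gamma$ is the void complex then $\Delta\ast\Gamma$ is void and there is nothing to prove; if $\Gamma=\{\emptyset\}$ then $\Delta\ast\Gamma=\Delta$ and we embed into $\R^{d_1}\times\R^{d_2}$ by taking $U_i\times\R^{d_2}$, whose union is $(\bigcup_k U_k)\times\R^{d_2}$, still convex open. Similarly if $\Delta$ is void. I do not anticipate a serious obstacle here; the only mild subtlety is making sure the sets for the $\Gamma$-vertices are "large enough" in the $\R^{d_1}$ direction so that they do not cut down the nerve or spoil convexity of the union — which is exactly why one uses $\bigcup_k U_k$ (rather than some smaller set) as the $\R^{d_1}$-footprint, and why the inclusion $U_\sigma\subseteq\bigcup_k U_k$ is the key observation. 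The whole argument is a short direct construction.

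\begin{proof}
If $\Gamma$ is the void complex, then $\Delta\ast\Gamma$ is void and the statement is vacuous; if $\Delta$ is void the same holds. Assume both are nonempty, and let $\Delta$ have vertex set $[n_1]$ and $\Gamma$ have vertex set $[n_2]$, viewed as disjoint sets so that $\Delta\ast\Gamma$ has vertex set $[n_1]\sqcup[n_2]$.

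Fix a $d_1$-convex union representation $\{U_i\}_{i=1}^{n_1}$ of $\Delta$ in $\R^{d_1}$, and set $U=\bigcup_{i=1}^{n_1}U_i$, a convex open subset of $\R^{d_1}$. By the Remark preceding Proposition~\ref{prop:cone}, fix a representation $\{W_j\}_{j=1}^{n_2}$ of $\Gamma$ in $\R^{d_2}$ by convex open sets. Work in $\R^{d_1+d_2}=\R^{d_1}\times\R^{d_2}$, and define
\[
T_i = U_i\times\R^{d_2}\ (i\in[n_1]),\qquad T_j = U\times W_j\ (j\in[n_2]).
\]
Each $T_i$ and each $T_j$ is convex and open, being a product of convex open sets (with $U$ convex open).

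We compute the nerve of $\{T_i\}_{i\in[n_1]}\cup\{T_j\}_{j\in[n_2]}$. Let $\sigma\subseteq[n_1]$ and $\rho\subseteq[n_2]$. If $\rho=\emptyset$ then $\bigcap_{i\in\sigma}T_i = U_\sigma\times\R^{d_2}$, which is nonempty iff $\sigma\in\Delta$. If $\sigma=\emptyset$ then $\bigcap_{j\in\rho}T_j = U\times W_\rho$, nonempty iff $W_\rho\neq\emptyset$, i.e. iff $\rho\in\Gamma$. In general,
\[
\Big(\bigcap_{i\in\sigma}T_i\Big)\cap\Big(\bigcap_{j\in\rho}T_j\Big) = \big(U_\sigma\cap U\big)\times W_\rho = U_\sigma\times W_\rho,
\]
where the last equality uses $U_\sigma\subseteq U$. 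This set is nonempty precisely when $U_\sigma\neq\emptyset$ and $W_\rho\neq\emptyset$, i.e. when $\sigma\in\Delta$ and $\rho\in\Gamma$, which is exactly the condition $\sigma\cup\rho\in\Delta\ast\Gamma$. Hence the nerve equals $\Delta\ast\Gamma$.

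It remains to check that the union of all these sets is convex and open. Since $\bigcup_{i\in[n_1]}T_i = \big(\bigcup_{i}U_i\big)\times\R^{d_2} = U\times\R^{d_2}$, and each $T_j = U\times W_j\subseteq U\times\R^{d_2}$, the total union is $U\times\R^{d_2}$. This is a product of the convex open set $U$ with $\R^{d_2}$, hence convex and open. Therefore $\{T_i\}_{i\in[n_1]}\cup\{T_j\}_{j\in[n_2]}$ is a $(d_1+d_2)$-convex union representation of $\Delta\ast\Gamma$.
\end{proof}
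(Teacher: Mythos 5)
Your proof is correct and follows essentially the same product construction as the paper; the only cosmetic difference is that you use $\R^{d_2}$ as the second factor for the $\Delta$-vertex sets where the paper uses $\conv\big(\bigcup_j W_j\big)$, and either choice makes the total union a product of convex open sets.
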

\begin{proof}
Let $A$ and $B$ be the (disjoint) vertex sets of $\Delta$ and $\Gamma$ respectively. Let $\{U_i\}_{i\in A}$ be a $d_1$-convex union representation of $\Delta$, and let $U = \bigcup U_i$. Let $\{V_j\}_{j\in B}$ be a $d_2$-representation of $\Gamma$ consisting of convex open sets. Define $V = \conv\left(\bigcup V_j \right)$. Then for $k\in A\cup B$ define \[
W_k = \begin{cases}
U_k\times V & k\in A,\\
U\times V_k & k\in B.
\end{cases}
\]
The union of all $W_k$ is the convex open set $U\times V$ since $U$ is the union of all $U_i$. Moreover the nonempty regions of intersection among the $W_k$ are of the form $U_\sigma \times V_\tau$ for $\sigma\in \Delta$ and $\tau\in \Gamma$, and so $\nerve(\{W_k\}_{k\in A\cup B}) = \Delta\ast \Gamma$. Thus $\Delta\ast\Gamma$ is $(d_1+d_2)$-convex union representable as desired. 
\end{proof}

In some instances, Proposition \ref{prop:join} along with Corollary \ref{cor:suspension} allows us to compute the minimum dimension of a convex union representation of a given complex. Below is one such example.

\begin{corollary} \label{cor:bound-dim}
 Let $\Delta=\ol{p}$ be a point, and let $\Sigma^k\Delta$ denote the $k$-fold suspension of $\Delta$. Then for all $k\ge 0$, the complex $\Sigma^k\Delta$ is $k$-convex union representable but not $(k-1)$-convex union representable. 
\end{corollary}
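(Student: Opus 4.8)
The plan is to establish the two halves separately: the positive part (that $\Sigma^k\Delta$ is $k$-convex union representable) by an explicit recursive construction, and the negative part (that it is not $(k-1)$-convex union representable) by induction using Corollary \ref{cor:suspension} together with a base-case dimension computation. For the positive direction, I would proceed by induction on $k$. When $k=0$, the complex $\ol{p}$ is a single vertex, which is trivially $0$-convex union representable (take $U_1=\R^0$, a single point, whose union with itself is convex and open in $\R^0$). For the inductive step, note that $\Sigma^{k}\Delta = \Sigma(\Sigma^{k-1}\Delta)$, and a suspension is the join of a complex with the $0$-sphere $\S^0 = \{\{a\},\{b\}\}$ (two disjoint points). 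The $0$-sphere is $1$-representable: place two disjoint convex open intervals in $\R^1$. Applying Proposition \ref{prop:join} with $\Delta$ being the (inductively obtained) $(k-1)$-convex union representable complex $\Sigma^{k-1}\ol p$ and $\Gamma = \S^0$ being $1$-representable, we conclude that $\Sigma^{k-1}\ol p \ast \S^0 = \Sigma^k \ol p$ is $((k-1)+1) = k$-convex union representable. This gives the upper bound.

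For the negative direction, I would again induct on $k$. The key is the contrapositive form of Corollary \ref{cor:suspension}: if $\Sigma^{k-1}\ol p$ is not $(k-1)$-convex union representable, then $\Sigma^k \ol p = \Sigma(\Sigma^{k-1}\ol p)$ is not $k$-convex union representable — but I want one dimension lower, so I should phrase it as: if $\Gamma$ is not $(d)$-convex union representable then $\Sigma\Gamma$ is not $(d+1)$-convex union representable. So it suffices to know that $\Sigma^{k-1}\ol p$ is not $(k-2)$-convex union representable (then apply Corollary \ref{cor:suspension} with $\Delta = \Sigma^{k-1}\ol p$ and $d = k-2$ to get that $\Sigma^k\ol p$ is not $(k-1)$-convex union representable). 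This reduces the claim to showing $\Sigma^k\ol p$ is not $(k-1)$-convex union representable for all $k$, i.e. exactly the statement we want, so the induction is clean provided the base case holds.

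The base case is $k=1$: I must show that $\Sigma\ol p = \S^0 \join \ol p$, which is the complex on vertices $\{a, b, p\}$ with facets $\{a,p\}$ and $\{b,p\}$ — a path with three vertices — is not $0$-convex union representable. A $0$-convex union representation lives in $\R^0$, a single point; the only nonempty convex open subset of $\R^0$ is the whole space (a point), and the only other convex open subset is $\emptyset$. A collection of such sets has nerve either the void complex (if all are empty) or a full simplex $2^{[m]}$ on the nonempty ones (since any two nonempty sets coincide with $\R^0$ and hence intersect). The path on three vertices is neither the void complex nor a simplex, so no $0$-dimensional representation exists. (Alternatively, and perhaps more robustly, I could take the base case at $k=2$: $\Sigma^2\ol p$ is the boundary of the octahedron's suspension structure — concretely $\S^0 \join \S^0 \join \ol p$ — and one checks it is not $1$-convex union representable, e.g. by noting a $1$-convex union representable complex is a subcomplex of an interval-graph clique complex and using the characterization recalled in the proof of Corollary \ref{cor:threefreefaces}; but the $k=1$ base case is simpler.)

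The main obstacle I anticipate is ensuring the dimension bookkeeping in the induction is airtight — specifically, making sure Corollary \ref{cor:suspension} is applied with the right value of $d$ so that the inductive hypothesis (non-$(k-2)$-representability of $\Sigma^{k-1}\ol p$) delivers exactly non-$(k-1)$-representability of $\Sigma^k\ol p$, and confirming that the positive-direction induction and negative-direction induction use compatible base cases (both at $k=0$ or $k=1$). A secondary point requiring care is the degenerate nature of low-dimensional Euclidean spaces ($\R^0$ and the conventions about open convex subsets), which is exactly where the base case lives; I would state the $\R^0$ analysis explicitly rather than gloss over it. Everything else is a direct citation of Proposition \ref{prop:join} and Corollary \ref{cor:suspension}.
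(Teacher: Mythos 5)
Your proposal is correct and follows essentially the same route as the paper: induction on $k$, with Proposition \ref{prop:join} (applied to $\Sigma^{k-1}\ol{p}\ast\S^0$ with $\S^0$ being $1$-representable) giving $k$-convex union representability, and Corollary \ref{cor:suspension} propagating the non-$(k-1)$-representability. The only difference is that you anchor the negative direction at $k=1$ with an explicit $\R^0$ analysis, whereas the paper starts at $k=0$; your version is slightly more careful about the degenerate base case but is otherwise identical in substance.
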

\begin{proof}
We work by induction on $k$. If $k=0$ then $\Sigma^k\Delta= \Delta$ and the result holds. Suppose $k\ge 1$ and let $\Gamma$ be  the complex consisting of two points. Observe that $\Gamma$ is 1-representable. Further observe that $\Sigma^k \Delta = (\Sigma^{k-1}\Delta)\ast \Gamma$, and so Proposition \ref{prop:join} implies that $\Sigma^k\Delta$ is $k$-convex union representable. Since $\Sigma^{k-1}\Delta$ is not $(k-2)$ convex union representable, Corollary \ref{cor:suspension} implies that $\Sigma^k\Delta$ is not $(k-1)$-convex union representable. The result follows. 
\end{proof}

The following proposition shows that building cones over certain subcomplexes of convex union representable complexes preserves convex union representability.

\begin{proposition}\label{prop:building}
Let $\Delta\subseteq 2^{[n]}$ be a $d$-convex union representable simplicial complex. Choose a face $\sigma\in\Delta$ and a set $\omega$ with $\sigma\subseteq \omega \subseteq [n]$. Then the complex \[
\Delta':=\Delta \cup \big((n+1) \ast\cstar_\Delta(\sigma)|_\omega\big)
\]
is $(d+1)$-convex union representable. 
\end{proposition}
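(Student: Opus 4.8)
The plan is to take a $d$-convex union representation $\{U_i\}_{i=1}^n$ of $\Delta$ and build a $(d+1)$-dimensional representation of $\Delta'$ by ``thickening'' along a new coordinate axis, placing $U_{n+1}$ so that it records exactly the faces of $(n+1)\ast\cstar_\Delta(\sigma)|_\omega$ and nothing more. First I would invoke Lemma \ref{lem:shrink} (via Proposition \ref{prop:polytoperealization}) to assume that the $U_i$ are bounded, that their closures are polytopes with $\nerve(\{\ol{U_i}\})=\Delta$, and that $U:=\bigcup_i U_i$ is the interior of a polytope; keeping the closed-nerve property is what will let me detect intersections on the boundary later. Embed everything into $\R^{d+1}$ in the hyperplane $x_{d+1}=0$.

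The core construction: shrink $U_\sigma$ if necessary and pick a small open ball $B\subseteq U_\sigma$, then take a ``tent'' $T$ over $U$, i.e.\ the convex hull of $U\times\{0\}$ with a point (or a small disc) sitting above $B$ at height $1$, intersected with $x_{d+1}\ge 0$; more precisely I want $T$ to be an open convex region whose slice at height $0$ is $U$ and whose slices at heights $t>0$ shrink down toward $B$, so that for $t$ close to $1$ the slice lies inside $U_\omega=\bigcap_{i\in\omega}U_i$. For $i\in[n]$ set $W_i:=(U_i\times\{0\})$ thickened up into $T$ appropriately — concretely $W_i:=\interior\big(\conv((U_i\times\{0\})\cup(\text{small cap over }U_\sigma\cap U_i\text{ at height }1))\big)\cap T$ — arranged so that at height $0$ the nerve is $\Delta$ and at positive heights the only surviving nonempty intersections $W_\tau$ are those with $\tau\subseteq\omega$ and $\tau\cup\sigma\in\Delta$, i.e.\ $\tau\in\cstar_\Delta(\sigma)|_\omega$. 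Finally put $W_{n+1}:=T\cap\{x_{d+1}>1/2\}$ (a convex open set), so that $W_{n+1}\cap W_\tau\ne\emptyset$ iff $\tau\in\cstar_\Delta(\sigma)|_\omega$, while the union $\bigcup_{k=1}^{n+1}W_k=T$ remains convex and open. One then checks $\nerve(\{W_k\}_{k=1}^{n+1})=\Delta\cup\big((n+1)\ast\cstar_\Delta(\sigma)|_\omega\big)=\Delta'$: faces not containing $n+1$ come from the height-$0$ slice and give exactly $\Delta$ (using that below some $\varepsilon$ the nerve is still $\Delta$, as the closures had nerve $\Delta$); faces containing $n+1$ are exactly $\{n+1\}\cup\tau$ with $\tau\in\cstar_\Delta(\sigma)|_\omega$.

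The main obstacle is making the thickening simultaneously satisfy three competing demands: (a) at height $0$ the nerve must be \emph{exactly} $\Delta$ — no spurious new intersections created by the thickening near the base; (b) at positive heights only the faces of $\cstar_\Delta(\sigma)|_\omega$ survive — in particular any $\tau\not\subseteq\omega$ must have $W_\tau$ pinched off below height $1/2$, which forces the cap over $U_i$ at height $1$ to be nonempty only for $i\in\omega$, and any $\tau$ with $\tau\cup\sigma\notin\Delta$ must also disappear, which is automatic since $U_{\tau\cup\sigma}=\emptyset$ already at the base; and (c) the overall region $T$ must stay convex so the union is convex. I expect (a) and (b) to be the delicate points: the clean way to handle them is to first pass to a representation where $\ol{U_i}$ are polytopes and $\ol{U_\sigma}$ meets $\ol{U_i}$ iff $i\in\sigma$'s star, then define the caps as tiny rescaled copies of $U_\sigma\cap U_i$ placed at height $1$ only for $i\in\omega$, take convex hulls with the base pieces, and use a positive-distance / Lebesgue-number argument (as in the proofs of Theorem \ref{thm:halfspace} and Lemma \ref{lem:shrink}) to choose the height threshold $1/2$ and the cap sizes small enough that no unwanted intersection is created. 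Convexity of $T$ is then free because $T$ is by construction a convex hull intersected with a halfspace.
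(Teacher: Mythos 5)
Your high-level picture --- a tent over $U=\bigcup_i U_i$ in $\R^{d+1}$ narrowing toward a region inside $U_\sigma$, with the new set sitting at the top of the tent --- is the paper's construction, but the way you resolve the ``delicate points'' you identify is wrong in two places. A preliminary error: you ask that the slices of the tent near height $1$ lie inside $U_\omega=\bigcap_{i\in\omega}U_i$, but $\omega$ only satisfies $\sigma\subseteq\omega\subseteq[n]$ and need not be a face of $\Delta$, so $U_\omega$ may be empty; the top of the tent must be pulled into $U_\sigma$ (nonempty since $\sigma\in\Delta$), and the restriction to $\omega$ must be imposed by a separate mechanism. Relatedly, a generic ``small open ball $B\subseteq U_\sigma$'' (or ``tiny rescaled copies of $U_\sigma\cap U_i$,'' which need not intersect one another even when the originals do) is not enough as an apex region: to produce the face $\{n+1\}\cup\tau$ for every $\tau\in\cstar_\Delta(\sigma)|_\omega$ the apex region must meet $U_{\tau\cup\sigma}$ for every face of $\Delta$ containing $\sigma$. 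The paper fixes this by choosing $V\subseteq U_\sigma$ with $\ol{V}\subseteq U_\sigma$ \emph{and} $V\cap U_\tau\neq\emptyset$ for all faces $\tau\supset\sigma$.

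The fatal gap is your mechanism for keeping the sets with $i\notin\omega$ away from the new vertex: you give them empty caps, so $W_i$ for $i\notin\omega$ is confined to (a neighborhood of) the base. Then the union of all the $W_k$ is no longer the tent: at intermediate heights, the portion of the tent lying over points of $U$ that are covered only by sets $U_i$ with $i\notin\omega$ is covered by nothing --- it is below your threshold $1/2$, hence not in $W_{n+1}$, and not in any $W_j$ with $j\in\omega$ --- so the union has holes and is not convex. (Try $\Delta$ the path on vertices $1,2,3$ with $\sigma=\omega=\{1\}$.) The paper's resolution is the key idea you are missing: first take \emph{every} $W_i$ to be the full prism $(U_i\times(0,1))$ intersected with the tent, so the union is automatically the whole tent and the nerve is $\Delta$; then truncate the $W_i$ with $i\notin\omega$ at a hyperplane $x_{d+1}=1-\varepsilon$ chosen so close to the apex that the removed piece already lies inside $W_\sigma$, hence is still covered by the untruncated $W_j$ with $j\in\sigma\subseteq\omega$ and by $W_{n+1}$. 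The cutoff height is therefore not a ``small enough'' Lebesgue-number choice but must be tied to the containment $H_\varepsilon^+\cap W\subseteq W_\sigma$, which follows from $\ol{V}\subseteq U_\sigma$. (The minor point that a convex set whose height-$0$ slice equals $U$ cannot be open in $\R^{d+1}$ is easily repaired by working in heights $(0,1)$, as the paper does.)
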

\begin{proof}
Let $\{U_i\}_{i=1}^n$ be a convex union representation of $\Delta$ in $\R^d$ consisting of bounded sets, and let $U = \bigcup_{i=1}^n U_i$. Choose an open convex set $V\subseteq U_\sigma$ such that $\overline V\subseteq U_\sigma$, and $V\cap U_\tau\neq\emptyset$ for all $\tau\supset \sigma$. Identify $\R^d$ with the hyperplane defined by $x_{d+1} = 0$ in $\R^{d+1}$, and let $\widetilde V$ be the shifted copy of $V$ contained in the hyperplane defined by $x_{d+1} = 1$. Define $W = \interior\conv(U\cup \widetilde V)$, and for $i\in [n]$ define \[
W_i = (U_i\times (0,1))\cap W. 
\]
Observe that $\{W_i\}_{i=1}^n$ is a $(d+1)$-convex union representation of $\Delta$ and that the union of all $W_i$ is equal to $W$. 

For $\varepsilon >0$, consider the hyperplane $H_\varepsilon$ defined by $x_{d+1}= 1-\varepsilon$, oriented so that $\widetilde V$ lies on the positive side. We claim that for small enough $\varepsilon$, $H^+_\varepsilon \cap W \subseteq W_\sigma$. Suppose not, so that there exists some $W_\tau\not\subseteq W_\sigma$ with $W_\tau\setminus W_\sigma$ containing points arbitrarily close to the hyperplane defined by $x_{d+1} = 1$. Since the only such points in $W$ are those approaching $V\times (0,1)$, we see that $W_\tau$ must contain points arbitrarily close to $V\times (0,1)$ but not in $W_\sigma$. But then the projection of these points onto $\R^d$ yields a series of points in $U$ approaching $V$, but not contained in $U_\sigma$. This contradicts the condition that $\overline V\subseteq U_\sigma$. We conclude that for some $\varepsilon >0$, $H^+_\varepsilon \cap W \subseteq W_\sigma$.

Now, for $i\notin \omega$, replace $W_i$ with $W_i\cap H_\varepsilon^-$. By choice of $\varepsilon$ this does not affect the nerve or union of $\{W_i\}_{i=1}^n$. Then define $W_{n+1} = W\cap H_\varepsilon^+$. We claim that $\{W_i\}_{i=1}^{n+1}$ is a $(d+1)$-convex union representation of $ \Delta \cup \big((n+1) \ast\cstar_\Delta(\sigma)|_\omega\big)$. To see this, observe that the regions of maximal intersection in $W\cap H_\varepsilon^+$ are the inclusion-maximal faces $\gamma$ of $\Delta$ satisfying $\sigma\subseteq \gamma \subseteq \omega$. These are exactly the facets of $\cstar_\Delta(\sigma)|_\omega$.  Thus the new faces introduced by $W_{n+1}$ are of the form $\{n+1\}\cup \gamma$ for $\gamma\in \cstar_\Delta(\sigma)|_\omega$, and the result follows.
\end{proof}

An immediate consequence of Proposition \ref{prop:building} is that trees are convex union representable:

\begin{corollary} \label{cor:tree}
A 1-dimensional complex $\Delta$ is convex union representable if and only if it is collapsible (in particular, if and only if it is a tree). 
\end{corollary}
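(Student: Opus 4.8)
The plan is to prove both directions, with the forward direction being routine and the backward direction being the real content. For the forward direction: if $\Delta$ is $1$-dimensional and convex union representable, then by Borsuk's nerve lemma (or just because we already know convex union representable complexes are collapsible, as recalled at the start of Section~\ref{sec:collapsiblenotCUR}), $\Delta$ is collapsible; a collapsible $1$-dimensional complex is a tree, since collapsibility forces connectedness and acyclicity and the only such graphs are trees. The parenthetical equivalence ``collapsible $\Leftrightarrow$ tree'' for $1$-dimensional complexes is a standard elementary fact and needs only a sentence.

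For the backward direction I would show that every tree is convex union representable, and the natural tool is Proposition~\ref{prop:building}. The idea is to build the tree up one edge at a time. Start with a single vertex, which is (trivially) $1$-convex union representable, and observe more generally that a path is $1$-convex union representable. Then, given a tree $T$ with more than one vertex, pick a leaf $v$ attached to a vertex $w$, so that $T = T' \cup \ol{\{v,w\}}$ where $T'=T\setminus v$ is a smaller tree, which by induction is convex union representable, say $d$-convex union representable for some $d$. Now apply Proposition~\ref{prop:building} with $\Delta = T'$, the face $\sigma = \{w\}$ (a vertex of $T'$), and $\omega = \{w\}$: then $\cstar_{T'}(\sigma)|_\omega = \cstar_{T'}(\{w\})|_{\{w\}} = \ol{\{w\}}$, a single vertex, so the construction adjoins a new vertex $n+1$ cone-joined to $\ol{\{w\}}$, i.e. it adds exactly the new edge $\{n+1, w\}$. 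Relabelling $n+1$ as $v$, the resulting complex $\Delta' = T' \cup \ol{\{v,w\}} = T$, and it is $(d+1)$-convex union representable. By induction, $T$ is convex union representable.

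There is a small bookkeeping point to handle: Proposition~\ref{prop:building} is stated for a complex on ground set $[n]$ with the new vertex being $n+1$, whereas a tree $T'$ need not use all of $[n]$ as vertices and $w$ must be an actual vertex of $T'$ (a $1$-element face); I would note that $w$ is a vertex of $T'$ because it is an endpoint of the edge $\{v,w\}$ of $T$ and $v$ is a leaf, so $w$ has degree $\ge 1$ in $T'$ (here we use that $T$ has at least two edges, treating the case of $T$ being a single edge or a single vertex as an easy base case), and that $\sigma = \{w\} \subseteq \omega = \{w\} \subseteq [n]$ as required. The main ``obstacle'' is really just confirming that the hypotheses of Proposition~\ref{prop:building} specialize correctly — in particular that $\cstar_\Delta(\{w\})|_{\{w\}}$ is the single vertex $\ol{\{w\}}$ and not something larger — after which everything is immediate. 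No genuinely hard step remains; this corollary is a direct application of the machinery already developed.

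\begin{proof}
If $\Delta$ is convex union representable, then it is collapsible by \cite[Lemma 5.9]{undecidability} (see also Theorem~\ref{thm:halfspace} with $H$ disjoint from all $U_i$). A $1$-dimensional complex is collapsible if and only if it is a tree: a collapse of a graph removes a leaf (and its incident edge) or an isolated vertex at each step, so a collapsible graph is connected and acyclic, hence a tree; conversely any tree collapses to a point by repeatedly removing leaves.

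For the converse, we show by induction on the number of edges that every tree is convex union representable. A tree with no edges is a single vertex (or the empty complex $\{\emptyset\}$, which is not of this form since a simplicial complex with a vertex is meant here — in any case a single vertex $\ol{v}$ is $0$-convex union representable by taking $U_v = \R^0$), which is convex union representable. Now let $T$ be a tree with at least one edge and suppose the claim holds for trees with fewer edges. Pick a leaf $v$ of $T$, and let $w$ be its unique neighbour, so that $T = T' \cup \ol{\{v,w\}}$ where $T' = T\setminus v$ is a tree with one fewer edge. By the inductive hypothesis $T'$ is $d$-convex union representable for some $d$; identify its vertex set with a subset of $[n]$, arranging that $v \notin [n]$ and that $n+1 = v$ after possible relabelling. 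Since $w$ is an endpoint of an edge of $T$ other than the case $T'$ is a single vertex (which is handled directly: then $T$ is a single edge, a path, hence $1$-convex union representable), $w$ is a vertex of $T'$, so $\sigma := \{w\} \in T'$ and $\sigma \subseteq \omega := \{w\} \subseteq [n]$. Applying Proposition~\ref{prop:building} to $\Delta = T'$, $\sigma$, and $\omega$, and noting that $\cstar_{T'}(\{w\})|_{\{w\}} = \ol{\{w\}}$ is a single vertex, we obtain that
\[
\Delta' = T' \cup \big(v \ast \ol{\{w\}}\big) = T' \cup \ol{\{v,w\}} = T
\]
is $(d+1)$-convex union representable. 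This completes the induction, and hence the proof.
\end{proof}
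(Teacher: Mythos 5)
Your proposal is correct and follows essentially the same route as the paper: induct on the number of edges, peel off a leaf $v$ with neighbour $w$, and apply Proposition~\ref{prop:building} with $\sigma=\omega=\{w\}$ so that $\cstar_{T'}(\{w\})|_{\{w\}}=\ol{\{w\}}$ and the construction adjoins exactly the missing edge, with the single-vertex base case realized in $\R^0$. The forward direction (convex union representable $\Rightarrow$ collapsible $\Rightarrow$ tree) is handled the same way the paper does, via the previously established collapsibility of convex union representable complexes.
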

\begin{proof}
If $\Delta\subseteq 2^{[n]}$ is a tree, we can build a convex union representation inductively using Proposition \ref{prop:building}. In the base case that $\Delta$ is a single vertex, a realization is given by $U_1=\{0\}= \R^0$. For $\Delta$ containing at least one edge, label the vertices so that $n+1$ is a leaf, and let $i$ be  the vertex that $n+1$ is adjacent to. Then $\Delta|_{[n]}$ is convex union representable by inductive hypothesis. Choosing $\sigma = \omega = \{i\}$  and applying Proposition \ref{prop:building} we obtain that the complex \[
\Delta|_{[n]} \cup \dc{\{i, n+1\}}
\]
is convex union representable. But this is just $\Delta$, so the result follows. 
\end{proof}

\begin{remark} 
In fact, it is not hard to show by induction that all trees are $2$-convex union representable.  This expands  a result of \cite{sparse}, which showed that trees (and in fact all planar graphs) are 2-representable.
\end{remark}

\begin{remark}
Essentially the same argument as in Corollary \ref{cor:tree} shows that if $\Delta$ is \emph{strong collapsible} and flag, then $\Delta$ is convex union representable. Strong collapsible complexes were introduced in \cite[Section 2]{BarmakMinian}. The condition of being a flag complex is only needed to guarantee that all vertex links are induced subcomplexes. In particular, since barycentric subdivisions are always flag and since the barycentric subdivision of any strong collapsible complex is strong collapsible \cite[Theorem 4.15]{BarmakMinian}, it follows that barycentric subdivisions of strong collapsible complexes are convex union representable. For example, the barycentric subdivision of an arbitrary cone complex is convex union representable.
\end{remark}

A simplicial complex $\Delta$ is a \emph{simplicial ball} (or a \emph{simplicial sphere}) if the geometric realization of $\Delta$ is homeomorphic to a ball (a sphere, respectively). Another immediate application of Proposition \ref{prop:building} is that all stacked balls are convex union representable. (Stacked balls are defined recursively: a $d$-dimensional simplex is a stacked ball with $d+1$ vertices; a $d$-dimensional stacked ball $\Delta$ on $n+1\geq d+2$ vertices is obtained from a $d$-dimensional stacked ball $\Gamma$ on $n$ vertices by choosing a free ridge $\sigma$ of $\Gamma$ and building a cone on it: $\Delta=\Gamma\cup((n+1)\ast\ol{\sigma})$.)
We close this section by showing that all simplicial balls in a certain larger class 
are convex union representable. 
Recall that if $P$ is a simplicial polytope of dimension $d$ and $v$ is a vertex of $P$, then $\partial P$ and $\partial P\setminus v$ are simplicial complexes of dimension $d-1$: the former is a simplicial sphere while the latter is a simplicial ball.
 
\begin{proposition} \label{prop:regularball}
Let $P$ be a $d$-dimensional simplicial polytope, and let $v$ be a vertex of $P$. Then $\partial P\setminus v$ is a $(d-1)$-convex union representable simplicial complex.
\end{proposition}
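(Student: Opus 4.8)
The plan is to realize $\partial P \setminus v$ directly as the nerve of a collection of convex open sets whose union is convex, working in $\R^{d-1}$. The natural construction uses a \emph{Schlegel diagram}: project $\partial P \setminus v$ through the vertex $v$ onto the hyperplane supporting a facet $F$ of $P$ that contains $v$, or alternatively project from a point just beyond $v$ onto a generic hyperplane. This yields a geometric simplicial complex in $\R^{d-1}$ that is a regular subdivision of a polytope $Q$ (the image of $F$, or of a facet not containing $v$, depending on the precise setup), and whose face poset is isomorphic to that of $\partial P \setminus v$. So the first step is to fix such a Schlegel-type realization $|\partial P \setminus v|$ as a polytopal region $Q \subseteq \R^{d-1}$ subdivided into geometric simplices indexed by the facets of $\partial P \setminus v$.

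Next I would build, for each vertex $i$ of $\partial P \setminus v$, an open convex set $U_i$ whose nerve recovers the complex. The idea is to let each $U_i$ be a small open convex neighborhood of the open star of vertex $i$ inside $Q$ — more precisely, one wants $U_i$ to intersect exactly those relatively open faces of the subdivision that contain $i$, and to arrange that $U_\sigma = \bigcap_{i \in \sigma} U_i$ is nonempty precisely when $\sigma$ spans a face of $\partial P \setminus v$. A clean way to do this is to fix a generic point in the relative interior of each simplex of the subdivision, and then take $U_i$ to be the interior of the convex hull of all such points lying in stars of faces containing $i$, intersected with a suitable neighborhood; one must check that a simplex $\sigma$ is a face iff all the $U_i$ for $i \in \sigma$ share a common point, which will be a point in the relative interior of $\sigma$'s carrier. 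The union $\bigcup_i U_i$ should then be (the interior of) $Q$, which is convex by construction since $Q$ is a polytope. This is exactly the flavor of construction used in Lemma \ref{lem:shrink} and Proposition \ref{prop:building}, so I would try to mimic the lattice/Lebesgue-number argument there, or cite the general principle that a regular (coherent) subdivision of a convex body has a nerve-preserving ``thickening'' of its open stars with convex union.

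Alternatively — and this may be cleaner — I would try an inductive approach using Proposition \ref{prop:building}. Order the vertices of $P$ other than $v$, say as $v_1, \dots, v_m$, so that we add them one at a time; deleting $v$ from $\partial P$ and then peeling off vertices should express $\partial P \setminus v$ as built from a simplex by a sequence of moves of the form $\Delta \mapsto \Delta \cup ((n+1) \ast \cstar_\Delta(\sigma)|_\omega)$. Concretely, a ``line shelling'' of $\partial P$ starting from a facet containing $v$ gives a shelling order of the facets of $\partial P \setminus v$; reversing it and interpreting each shelling step as coning off a subcomplex of the already-built complex is precisely the operation in Proposition \ref{prop:building}. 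Then each application raises the dimension of the representation by one, but careful bookkeeping (or the fact that stacked-like balls can be done in fixed dimension) should keep things controlled, and at worst we conclude convex union representability in \emph{some} dimension, which is what the statement asks. I would lean on the shelling structure of polytope boundaries here.

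The main obstacle I expect is verifying that the chosen open sets have \emph{exactly} the right nerve — i.e., ruling out ``accidental'' nonempty intersections $U_\sigma$ for $\sigma$ that is not a face of $\partial P\setminus v$ — while \emph{simultaneously} keeping the union convex. These two requirements pull in opposite directions: shrinking the $U_i$ to kill spurious intersections risks creating a hole in the union, and enlarging them to fill the union risks creating spurious intersections. The Schlegel/regular-subdivision viewpoint is what makes this work: because the subdivision is regular (coherent), there is a convex ``height'' function witnessing it, and one can use sublevel sets of perturbations of this function to define the $U_i$ so that intersection patterns are governed exactly by the face lattice while the union remains the full convex region $Q$. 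I would either carry out this height-function argument explicitly or, if Proposition \ref{prop:building} route works, sidestep it entirely by induction — the latter being less delicate since Proposition \ref{prop:building} already packages the nerve-plus-convex-union bookkeeping.
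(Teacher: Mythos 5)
There is a genuine gap here, and it is precisely at the point you flag as the main obstacle. Your first route realizes $\partial P\setminus v$ geometrically as a (regular) subdivision of a polytope $Q\subseteq\R^{d-1}$ and then tries to take $U_i$ to be a convexified thickening of the open star of the vertex $i$. But open stars in a geometric triangulation are not convex, and passing to convex hulls (of the star, or of sample points chosen in the cells of the star) generically creates spurious intersections $U_\sigma\neq\emptyset$ for non-faces $\sigma$; shrinking to kill these punches holes in the union. You name this tension but neither of your proposed remedies resolves it: the ``convex hull of sample points'' construction is exactly the one that fails, and the ``sublevel sets of perturbations of the height function'' idea is not carried out. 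The resolution the paper uses is to \emph{dualize} rather than convexify: translate so the origin is interior to $P$, pass to the polar polytope $P^\ast$, and take the Schlegel diagram of $P^\ast$ based at the facet $F=\hat v$ dual to $v$. The facets $G_w$ of that Schlegel diagram are indexed by the remaining facets of $P^\ast$, i.e.\ by the vertices $w$ of $\partial P\setminus v$; they are honest convex polytopes; a subcollection $G_{w_1},\dots,G_{w_k}$ meets if and only if $\{w_1,\dots,w_k\}\in\partial P\setminus v$ (by polarity); and their union is $F$ itself, a convex polytope in $\R^{d-1}$. Proposition \ref{prop:polytoperealization} then converts this closed representation into an open one. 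So the convex sets are the \emph{dual} cells, not thickened primal stars, and the nerve condition is automatic rather than something to be engineered.

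Your second route also does not go through. Proposition \ref{prop:building} only adds a \emph{new} vertex coned over a subcomplex of the very special form $\cstar_\Delta(\sigma)|_\omega$, whereas a shelling step of $\partial P\setminus v$ need not introduce a new vertex at all, and the attaching subcomplex $F_j\cap\bigl(\bigcup_{i<j}F_i\bigr)$ is in general a union of ridges of $\overline{F_j}$ that is not a restricted star of a face of the previously built complex. (This is why the paper invokes Proposition \ref{prop:building} only for stacked balls, where each step adds one new vertex over a single free ridge.) Moreover, even where Proposition \ref{prop:building} applies, each application increases the representation dimension by one, so this route cannot yield the dimension bound that is actually claimed: the statement asserts $(d-1)$-convex union representability, not merely convex union representability in some dimension, so your fallback ``at worst we conclude convex union representability in some dimension, which is what the statement asks'' concedes more than the proposition allows.
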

\begin{proof}
Translate $P$ if necessary so that the origin is in the interior of $P$. Let $P^\ast$ be the polar polytope of $P$, and let $F=\hat{v}$ be the facet of $P^\ast$ corresponding to the vertex $v$. Consider the Schlegel diagram $\mathcal{S}(F)$ of $P^\ast$ based at $F$. This is a \emph{polytopal} complex; in particular the facets of $\mathcal{S}(F)$ are polytopes. 
Furthermore, $\mathcal{S}(F)$ satisfies the following properties (see Chapters 2, 5, and 8 of \cite{ziegler-polytopesbook} for basics on polar polytopes, Schlegel diagrams, and polytopal complexes): (i) the set of facets of $\mathcal{S}(F)$ is in bijection with the set of facets of $P^\ast$ other than $F$, which in turn is in bijection with the vertex set $\mathcal{V}$ of $\partial P\setminus v$, that is, we can index the facets of $\mathcal{S}(F)$ by vertices of $\partial P\setminus v$:  $\{G_w\}_{w\in \mathcal{V}}$; (ii) for $w_1,\ldots, w_k\in \mathcal{V}$, the facets $G_{w_1},\ldots, G_{w_k}$ have a nonempty intersection if and only if $\{w_1,\ldots, w_k\}$ is a face of $\partial P\setminus v$; (iii) the union of all facets $G_w$ of $\mathcal{S}(F)$ is $F$. These properties imply that $\{G_w\}$ is a collection of closed convex subsets of $\text{Aff}(F)\cong \R^{d-1}$ whose nerve is $\partial P\setminus v$, and whose union is convex. This along with Proposition \ref{prop:polytoperealization} yields the result.
\end{proof}

\begin{corollary} \label{cor:2-balls}
Let $\Delta$ be an arbitrary $2$-dimensional simplicial ball. Then $\Delta$ is $2$-convex union representable.
\end{corollary}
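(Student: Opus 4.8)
The plan is to deduce Corollary \ref{cor:2-balls} from Proposition \ref{prop:regularball} by realizing every $2$-dimensional simplicial ball as $\partial P \setminus v$ for a suitable $3$-dimensional simplicial polytope $P$ and vertex $v$. Concretely, given a $2$-dimensional simplicial ball $\Delta$, I would first recall Steinitz's theorem: a graph is the $1$-skeleton of a $3$-dimensional polytope if and only if it is planar and $3$-connected. A $2$-dimensional simplicial ball is (the underlying complex of) a triangulated disk, so its boundary is a cycle; the idea is to "cap off" this boundary cycle by coning it to a new apex vertex $v$, producing a triangulated $2$-sphere $\Sigma = \Delta \cup (v \ast \partial \Delta)$, and then invoke Steinitz to realize $\Sigma$ as the boundary complex $\partial P$ of a simplicial $3$-polytope $P$. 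Then $\Delta = \Sigma \setminus v = \partial P \setminus v$, and Proposition \ref{prop:regularball} (with $d = 3$) gives that $\Delta$ is $2$-convex union representable.

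The main steps, in order, are: (1) check that $\partial\Delta$, the boundary of the $2$-ball $\Delta$, is a disjoint union of cycles — in fact for a simplicial $2$-ball it is a single cycle, being the link of the "missing" structure, though one should be slightly careful and perhaps only needs that it is a $1$-sphere; (2) form $\Sigma = \Delta \cup (v \ast \partial\Delta)$ and verify it is a simplicial $2$-sphere (every edge lies in exactly two triangles, and the gluing along the common boundary cycle produces a closed surface with the homotopy type / homology of $S^2$, hence a $2$-sphere by classification of surfaces); (3) verify the $1$-skeleton of $\Sigma$ is planar and $3$-connected so that Steinitz applies, yielding a polytope $P$ whose boundary complex is $\Sigma$ with $P$ automatically simplicial since $\Sigma$ is a triangulation; (4) apply Proposition \ref{prop:regularball} to $P$ and $v$. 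One subtlety worth flagging: Steinitz's theorem gives a polytope whose $1$-skeleton is the prescribed graph, and for a triangulated sphere all the $2$-faces are then forced to be triangles, so $P$ is indeed simplicial — but one should double-check there is no degenerate low-vertex case (e.g. $\Delta$ a single triangle or an edge), which can be handled directly.

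The hard part will be step (3): ensuring $3$-connectivity of the $1$-skeleton of $\Sigma$. A triangulated $2$-sphere always has a $3$-connected $1$-skeleton (this is classical — planar triangulations are $3$-connected), so this should go through, but it relies on knowing $\Sigma$ genuinely is a $2$-sphere rather than some other surface, which is exactly step (2). The delicate point in step (2) is that the abstract simplicial complex $\Sigma$ we build might a priori fail to be a closed pseudomanifold if $\partial\Delta$ is not a single cycle or if $v \ast \partial\Delta$ shares unexpected faces with $\Delta$; since $v$ is a new vertex and $\partial\Delta$ is an induced subcomplex of $\Delta$, the join introduces no unintended identifications, so $\Sigma$ is the result of gluing two disks along their common boundary circle, which is $S^2$. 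Alternatively, and perhaps more robustly, one can avoid Steinitz entirely by using the stacked-ball / shelling structure or a direct geometric argument, but the Steinitz route is cleanest given the machinery already in place; I would also note that small cases ($\Delta$ a point, an edge, or a single triangle) are covered by Proposition \ref{prop:cone} since they are cones, so WLOG $\Delta$ has enough vertices for Steinitz.
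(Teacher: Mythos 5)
Your proposal is correct and follows essentially the same route as the paper: cone the boundary circle $\partial\Delta$ to a new apex $v$ to obtain a simplicial $2$-sphere $\Lambda=\Delta\cup(v\ast\partial\Delta)$, realize $\Lambda$ as the boundary of a simplicial $3$-polytope via Steinitz' theorem, and apply Proposition \ref{prop:regularball} to $\Lambda\setminus v=\Delta$. The extra care you take with $3$-connectivity and degenerate cases is fine but not needed beyond what the paper records (note that a point or an edge is not a $2$-dimensional ball, so those cases do not arise).
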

\begin{proof}
Let $v$ be a vertex not in $\Delta$, and let $\partial\Delta$ denote the boundary of $\Delta$, that is, $\partial\Delta$ is the $1$-dimensional subcomplex of $\Delta$ whose facets are precisely the free edges of $\Delta$. Let $\Lambda:=\Delta \cup (v\ast \partial\Delta)$. Then $\Lambda$ is a $2$-dimensional simplicial sphere, and so by Steinitz' theorem (see \cite[Chapter 4]{ziegler-polytopesbook}), $\Lambda$ can be realized as the boundary complex of a simplicial polytope. Since $\Delta=\Lambda\setminus v$, the previous proposition implies the result.
\end{proof}

The situation with higher-dimensional balls is much more complicated. For instance, there exist $3$-dimensional simplicial balls that are not even collapsible. (See \cite{knots-in-balls} for an explicit non-collapsible example with only $15$ vertices.) It would be interesting to understand which collapsible triangulations of balls are convex union representable.

\section{Nerve Obstructions to Convexity in Neural Codes}\label{sec:codes}
The goal of this section is to apply our results to the study of convex neural codes. Convex neural codes are a topic of recent research, see for example \cite{undecidability, chadvlad, local15, nogo, neuralring13, obstructions}. Informally, the theory of neural codes aims to answer the question ``what are the possible intersection patterns of collections of convex open sets?" We begin with some background to make this question more precise. 

A \emph{neural code} or \emph{combinatorial code} is a subset $\C$ of $2^{[n]}$. Given a collection $\{U_i\}_{i=1}^n$ of convex open sets in $\R^d$ the \emph{code of $\{U_i\}_{i=1}^n$} is the neural code \[
\code(\{U_i\}_{i=1}^n) \od \bigg\{\sigma\subseteq [n] \Bigm| U_\sigma\setminus \bigcup_{j\notin\sigma} U_j\neq \emptyset\bigg\}
\]
where $U_\emptyset=\conv(\bigcup_{i=1}^n U_i)$ as earlier in the paper. 
The collection $\{U_i\}_{i=1}^n$ is called a \emph{convex realization} of  $\code(\{U_i\}_{i=1}^n)$. If a code $\C$ has a convex realization, we call $\C$ a \emph{convex code}, and otherwise we say that $\C$ is non-convex. Indices in $[n]$ may be referred to as \emph{neurons}, and $U_i$ is the \emph{receptive field} of a neuron $i$.

\begin{remark}
The convention that $U_\emptyset=\conv(\bigcup_{i=1}^n U_i)$ differs somewhat from existing neural code literature, in which one usually specifies an ``ambient space" $X$ containing all $U_i$, and defines $U_\emptyset = X$. Our convention can be thought of as treating $\conv(\bigcup_{i=1}^n U_i)$ as an implicit ambient space. This convention is motivated by the fact that, with it,  $\emptyset \notin \code(\{U_i\}_{i=1}^n)$ if and only if $\bigcup_{i=1}^n U_i$ is convex. More generally (and independent of this convention) we have for nonempty $\sigma$ that $\sigma\notin \code(\{U_i\}_{i=1}^n)$ if and only if $\{U_j\mid j\notin \sigma\}$ covers $U_\sigma$.
\end{remark}

For any neural code $\C$, the smallest simplicial complex containing $\C$ is denoted $\Delta(\C)$. The topology of this simplicial complex has been used to characterize certain ``local obstructions" to convexity. In particular, \cite{local15, nogo} showed that if $\C$ is a convex code then for all $\sigma\in \Delta(\C)\setminus \C$ the link $\link_{\Delta(\C)}(\sigma)$ must be contractible. In \cite{undecidability} the authors strengthened ``contractible" to ``collapsible." A failure of $\link_{\Delta(\C)}(\sigma)$ to be contractible is called a \emph{local obstruction of the first kind at $\sigma$}, while a failure to be collapsible is a  \emph{local obstruction of the second kind}.
Based on similar arguments, we define ``nerve obstructions" and further strengthen ``collapsible" to ``convex union representable."

\begin{definition}
Let $\C\subseteq 2^{[n]}$ and $\sigma\in \Delta(\C)\setminus \C$. We say that $\C$ has a \emph{nerve obstruction} at $\sigma$ if $\link_{\Delta(\C)}(\sigma)$ is not convex union representable. If $\C$ has no nerve obstructions, then $\C$ is called \emph{locally perfect}.
\end{definition}

\begin{remark}
Nerve obstructions generalize local obstructions in the following sense. If $\C$ has a local obstruction of the first or second kind at $\sigma$, then $\C$ has a nerve obstruction at $\sigma$.
\end{remark}

\begin{proposition}\label{prop:nerveobstruction} Let $\C\subseteq 2^{[n]}$ be a neural code.
If $\C$ is convex then $\C$ is locally perfect. In particular, if $\C$ has a realization in $\R^d$ then for all $\sigma\in \Delta(\C)\setminus \C$ the link $\link_{\Delta(\C)}(\sigma)$ is $d$-convex union representable. 
\end{proposition}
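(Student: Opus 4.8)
The plan is to connect the definition of a convex realization of a code $\C$ directly to the definition of a convex union representation of the relevant link. Suppose $\C$ has a convex realization $\{U_i\}_{i=1}^n$ in $\R^d$, and fix $\sigma \in \Delta(\C)\setminus \C$. Since $\sigma \in \Delta(\C)$, we have $U_\sigma \neq \emptyset$, while $\sigma \notin \C$ means precisely (by the remark following the definition of $\code$, for nonempty $\sigma$) that the sets $\{U_j \mid j \notin \sigma\}$ cover $U_\sigma$. The first step is to pass to the open convex set $C := U_\sigma = \bigcap_{i\in\sigma} U_i$, which plays the role of an ambient convex region, and to consider the collection $\{U_j \cap C\}_{j \notin \sigma}$ of convex open subsets of $C$. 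Since $C$ itself is open and convex and is covered by this collection, its union $\bigcup_{j\notin\sigma}(U_j \cap C) = C$ is convex and open. Identifying $C$ with an open subset of $\R^d$ — or, if one wants a representation over all of $\R^d$ rather than just a convex open subset, noting that an open convex subset of $\R^d$ is affinely homeomorphic to $\R^d$ or can be handled by the same machinery — this collection is a candidate $d$-convex union representation.

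The second step is to compute the nerve of $\{U_j \cap C\}_{j\notin\sigma}$ and verify it equals $\link_{\Delta(\C)}(\sigma)$. This is the same bookkeeping as in the proof of Corollary~\ref{cor:collapsetostar}: for $\tau \subseteq [n]\setminus\sigma$ we have $\bigcap_{j\in\tau}(U_j \cap C) = U_{\tau\cup\sigma}$, which is nonempty if and only if $\tau\cup\sigma \in \nerve(\{U_i\}_{i=1}^n) = \Delta(\C)$, i.e.\ if and only if $\tau \in \link_{\Delta(\C)}(\sigma)$. Hence $\nerve(\{U_j\cap C\}_{j\notin\sigma}) = \link_{\Delta(\C)}(\sigma)$, and combined with the first step this exhibits $\link_{\Delta(\C)}(\sigma)$ as a $d$-convex union representable complex. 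In particular it is convex union representable, so $\C$ has no nerve obstruction at $\sigma$; since $\sigma$ was arbitrary, $\C$ is locally perfect.

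The one genuine subtlety — the step I expect to be the main (though minor) obstacle — is the requirement in Definition~\ref{def:convexunionrepresentable} that the ambient space be $\R^d$, whereas the construction above naturally lives inside the open convex set $C \subsetneq \R^d$. The clean way to handle this is to observe that any nonempty open convex subset of $\R^d$ is homeomorphic — indeed diffeomorphic, and one can even arrange an affine-type bijection preserving convexity of all subsets — to $\R^d$, or alternatively to note that the results we need about convex union representable complexes (and the definition itself, suitably read) are insensitive to replacing $\R^d$ by an open convex subset; Proposition~\ref{prop:polytoperealization} and the surrounding discussion already make clear that bounded/unbounded and various ambient conventions do not matter. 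Either route gives that $\link_{\Delta(\C)}(\sigma)$ is $d$-convex union representable. The final sentence of the proposition is then just the statement that this $d$ is the dimension of the given realization, which is immediate from the construction.
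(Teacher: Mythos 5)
Your proof is correct and is essentially the paper's own argument: intersect the sets $U_j$ for $j\notin\sigma$ with $U_\sigma$, and observe that the resulting collection of convex open sets has union $U_\sigma$ (because $\sigma\notin\C$) and nerve $\link_{\Delta(\C)}(\sigma)$. The one ``subtlety'' you flag is not actually an issue: Definition \ref{def:convexunionrepresentable} only requires the sets to be convex open subsets of $\R^d$ whose union is a convex open set, not that the union be all of $\R^d$, so no identification of $U_\sigma$ with $\R^d$ is needed.
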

\begin{proof}
Let $\{U_i\}_{i=1}^n$ be a realization of $\C$ in $\R^d$ as a neural code, and let $\sigma\in \Delta(\C)\setminus \C$. Consider the collection $\{U_j\cap U_\sigma \mid j\notin \sigma\}$. Since $\sigma\notin \C$ this collection covers $U_\sigma$. That is, the union of sets in this collection is the convex open set $U_\sigma$.  Moreover the nerve $\nerve(\{U_j\cap U_\sigma \mid j\notin \sigma\})$ is exactly $\link_{\Delta(\C)}(\sigma)$, so this collection gives a $d$-convex union representation of the link, proving the result. 
\end{proof}

Proposition \ref{prop:nerveobstruction} applied to $\sigma=\emptyset$ yields the following:

\begin{corollary}\label{cor:nerveobstruction}
Let $\Gamma$ be a simplicial complex that is not convex union representable. Then $\Gamma\setminus\{\emptyset\}$ is not a convex neural code. 
\end{corollary}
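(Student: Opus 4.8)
The plan is to deduce this immediately from Proposition~\ref{prop:nerveobstruction} (or rather its contrapositive) by exhibiting a suitable neural code and face. Suppose for contradiction that $\C := \Gamma \setminus \{\emptyset\}$ is a convex neural code, with convex realization $\{U_i\}_{i=1}^n$ in some $\R^d$. The key observation is that $\emptyset \notin \C$, and hence $\emptyset \in \Delta(\C) \setminus \C$; moreover, since $\Gamma$ is a simplicial complex containing $\C = \Gamma \setminus \{\emptyset\}$ and $\Gamma$ contains $\emptyset$, we have $\Delta(\C) = \Gamma$ (the smallest simplicial complex containing $\C$ must contain $\emptyset$, and $\Gamma$ is already a simplicial complex containing $\C$, with $\Gamma \setminus \Delta(\C) \subseteq \{\emptyset\}$, forcing equality).

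Next I would apply Proposition~\ref{prop:nerveobstruction} with $\sigma = \emptyset \in \Delta(\C)\setminus\C$. That proposition tells us $\link_{\Delta(\C)}(\emptyset)$ is convex union representable. But $\link_{\Delta(\C)}(\emptyset) = \Delta(\C) = \Gamma$, since the link of the empty face of any complex is the complex itself. This contradicts the hypothesis that $\Gamma$ is not convex union representable. Therefore $\C = \Gamma \setminus \{\emptyset\}$ cannot be a convex neural code, which is exactly the claim.

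I do not anticipate a genuine obstacle here; the only point requiring a little care is checking that $\Delta(\C) = \Gamma$ when $\C = \Gamma \setminus \{\emptyset\}$, which hinges on the paper's convention that the empty set is always a face of a (nonvoid) simplicial complex but need not be an element of a code, so that $\Delta(\C)$ is obtained from $\C$ precisely by adjoining $\emptyset$ when it is missing. One should also note the degenerate situation: if $\Gamma = \{\emptyset\}$ is the empty complex, then $\{\emptyset\}$ is not convex union representable (indeed $\nerve(\U)$ is never $\{\emptyset\}$), and $\Gamma \setminus \{\emptyset\} = \emptyset$, which is vacuously not a convex neural code in a meaningful sense; but since the interesting content is for $\Gamma$ with at least one vertex, the argument above covers all cases of substance. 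This is why the statement is phrased as ``Proposition~\ref{prop:nerveobstruction} applied to $\sigma = \emptyset$.''
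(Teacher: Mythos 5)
Your proof is correct and follows exactly the paper's argument: identify $\Delta(\Gamma\setminus\{\emptyset\})=\Gamma$ and $\link_{\Gamma}(\emptyset)=\Gamma$, then apply Proposition~\ref{prop:nerveobstruction} at $\sigma=\emptyset$ to conclude the code has a nerve obstruction there. The extra care you take with the degenerate case $\Gamma=\{\emptyset\}$ is fine but not needed for the substance of the argument.
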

\begin{proof} Note that $\Delta(\Gamma\setminus\{\emptyset\})=\Gamma$ and $\link_\Gamma(\emptyset)=\Gamma$. Thus, if $\Gamma$ is not convex union representable, then the code $\Gamma\setminus\{\emptyset\}$ has a nerve obstruction at $\emptyset$, and so it is not convex.
\end{proof}

Corollary \ref{cor:nerveobstruction} gives us new families of examples of neural codes that are not convex. In particular, if $\Gamma$ is one of the collapsible but non-convex union representable complexes $\Sigma_d$ or $E_d$ of Corollary \ref{cor:collapsiblebutnotrepresentable}, then $\Gamma\setminus \{\emptyset\}$ is a non-convex neural code. 

Similarly to the case of usual local obstructions \cite{local15}, a
useful property of nerve obstructions is that they can only occur at intersections of maximal codewords. This makes searching for local obstructions more efficient. 
\begin{proposition}\label{prop:missingmaxint}
Let $\C\subseteq 2^{[n]}$ be a neural code with a nerve obstruction at $\sigma\subseteq [n]$. Then $\sigma$ is an intersection of maximal codewords of $\C$. 
\end{proposition}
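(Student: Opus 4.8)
The plan is to follow the template of the classical argument for ordinary local obstructions, adapting it to the nerve obstruction setting. Suppose $\C$ has a nerve obstruction at $\sigma$, so $\sigma \in \Delta(\C) \setminus \C$ and $\link_{\Delta(\C)}(\sigma)$ is not convex union representable. I want to show $\sigma$ is an intersection of maximal codewords of $\C$. First I would let $\tau$ denote the intersection of all maximal codewords of $\C$ that contain $\sigma$ (note there is at least one, since $\sigma \in \Delta(\C)$ means $\sigma$ is contained in some facet of $\Delta(\C)$, which is a maximal codeword). Then $\sigma \subseteq \tau$, and $\tau$ itself is a face of $\Delta(\C)$. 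The goal is to show $\sigma = \tau$; equivalently, assuming $\sigma \subsetneq \tau$, I want to derive that $\link_{\Delta(\C)}(\sigma)$ is in fact convex union representable (in fact a cone), contradicting the nerve obstruction.

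The key observation is that if $\sigma \subsetneq \tau$, pick any vertex $v \in \tau \setminus \sigma$. I claim $v$ is a cone vertex of $\link_{\Delta(\C)}(\sigma)$. Indeed, I need: for every face $\rho \in \link_{\Delta(\C)}(\sigma)$, also $\rho \cup \{v\} \in \link_{\Delta(\C)}(\sigma)$, i.e. $\rho \cup \sigma \cup \{v\} \in \Delta(\C)$. Since $\rho \cup \sigma \in \Delta(\C)$, it lies in some maximal codeword $F$ of $\C$. Now here is where the definition of $\tau$ enters: I would argue that any facet of $\Delta(\C)$ containing $\sigma$ also contains $v$. This needs care — a facet of $\Delta(\C)$ is a maximal face of $\Delta(\C)$, and since $\C$ contains all facets of $\Delta(\C)$ (the maximal elements of $\C$ are exactly the facets of $\Delta(\C)$, as $\Delta(\C)$ is the smallest complex containing $\C$), every facet of $\Delta(\C)$ containing $\sigma$ is a maximal codeword containing $\sigma$, hence contains $\tau \ni v$. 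Therefore $F \supseteq \sigma$ implies $v \in F$, so $\rho \cup \sigma \cup \{v\} \subseteq F \in \Delta(\C)$, giving $\rho \cup \{v\} \in \link_{\Delta(\C)}(\sigma)$. Thus $\link_{\Delta(\C)}(\sigma)$ is a cone with apex $v$, hence convex union representable by Proposition~\ref{prop:cone}, contradiction.

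I would therefore conclude $\sigma = \tau$, i.e. $\sigma$ is the intersection of all maximal codewords of $\C$ containing it, which in particular exhibits $\sigma$ as an intersection of maximal codewords. The main obstacle — really the only subtle point — is the bookkeeping around "maximal codeword" versus "facet of $\Delta(\C)$": I must use that the maximal elements of $\C$ coincide with the facets of $\Delta(\C)$, so that the intersection defining $\tau$ is over exactly the facets of $\Delta(\C)$ containing $\sigma$, and that $\sigma \in \Delta(\C)$ guarantees this collection is nonempty. Everything else is a short cone argument invoking Proposition~\ref{prop:cone}. A clean way to package the write-up is: (1) reduce to showing $\link_{\Delta(\C)}(\sigma)$ is not a cone (contrapositive of Proposition~\ref{prop:cone}); (2) observe a vertex $v \notin \sigma$ is an apex of $\link_{\Delta(\C)}(\sigma)$ iff $v$ lies in every facet of $\Delta(\C)$ containing $\sigma$; (3) conclude that the only faces not "forced in" this way are precisely the vertices outside $\bigcap\{F : F \text{ a facet of } \Delta(\C),\ F \supseteq \sigma\}$, so if $\link_{\Delta(\C)}(\sigma)$ is not a cone then $\sigma$ must equal that intersection.
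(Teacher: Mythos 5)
Your proposal is correct and follows the same route as the paper: argue (contra)positively that if $\sigma$ is not the intersection of the maximal codewords containing it, then $\link_{\Delta(\C)}(\sigma)$ is a cone and hence convex union representable by Proposition~\ref{prop:cone}. The only difference is that you spell out the cone-apex verification that the paper leaves implicit; that bookkeeping (maximal codewords $=$ facets of $\Delta(\C)$) is handled correctly.
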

\begin{proof}
We argue the contrapositive. Suppose that $\sigma$ is not an intersection of maximal codewords. Then $\link_{\Delta(\C)}(\sigma)$ is a cone, which is convex union representable by Proposition \ref{prop:cone}. Thus $\C$ does not have a nerve obstruction at $\sigma$. 
\end{proof}

We close this section with several remarks.
\begin{remark} It is important to note 
that the converse to Proposition \ref{prop:nerveobstruction} is false, that is, there exist non-convex locally perfect codes. A first example is given in \cite[Theorem 3.1]{obstructions}. The non-convex code on $5$ neurons described in this theorem has no local obstructions, and in fact it has no nerve obstructions. (The latter follows from a simple fact all contractible complexes with at most 4 vertices are convex union representable.) We build on this example in \cite{sunflowers}, providing an infinite family of locally perfect codes that are not convex. 
\end{remark}

\begin{remark} \label{rem:closed-vs-open}
Instead of considering codes arising from a collection of open convex sets in $\R^d$, one may consider codes arising from a collection of \emph{closed} convex sets. While according to \cite{chadvlad} the classes of codes realizable by closed convex sets and those realizable by open convex sets are distinct, and in fact neither class contains the other, the results in this paper (e.g., Proposition \ref{prop:nerveobstruction} and Corollary \ref{cor:nerveobstruction}) apply equally well for both types of codes: open-convex and closed-convex. This is because convex union representability does not change if we replace the openness requirement with closedness, see Proposition \ref{prop:polytoperealization}.
\end{remark}

\section{Concluding remarks and open problems}\label{sec:conclusion}

We have seen that convex union representability does not follow from familiar combinatorial properties such as collapsibility, shellability, or non-evasiveness. Observe also that being pure and convex union representable does not imply being shellable or even constructible. Indeed, let $\Gamma$ be a pure complex that is not constructible, and let $\Delta=v\ast\Gamma$. Then $\Gamma$ is also not constructible, however, $\Gamma$ is a cone, and so it is convex union representable by Proposition~\ref{prop:cone}.

It is also worth mentioning that the class of convex union representable complexes is {\bf not} closed under arbitrary collapses. To see this, let $\Delta'$ be a collapsible but not convex union representable complex, and let $\Delta=v\ast \Delta'$. Then $\Delta$ is  convex union representable by Proposition \ref{prop:cone}. Furthermore, $\Delta$ collapses to $\Delta'$, but $\Delta'$ is not convex union representable. (The fact that $\Delta$ collapses to $\Delta'$ follows from the following standard result: Let $\Gamma$ be a simplicial complex and let $\Gamma'\subseteq \Gamma$ be a nonempty collapsible subcomplex. If $v$ is a vertex not in $\Gamma$, then $(v \ast \Gamma')\cup\Gamma$ collapses onto $\Gamma$.)

In fact, recognizing $d$-convex union representable complexes is as hard as recognizing $d$-representable complexes. We thank Martin Tancer for bringing to our attention the following result:
\begin{proposition} \label{propos:NP-hard}
It is NP-hard to recognize $2$-convex union representable complexes.
\end{proposition}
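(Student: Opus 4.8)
The natural approach is to reduce from the known NP-hardness of recognizing $2$-representable complexes (the problem of deciding whether a complex is the nerve of a family of convex sets in $\R^2$; see \cite{tancer}). Given an arbitrary complex $\Gamma$ on vertex set $[n]$ which we wish to test for $2$-representability, I would build in polynomial time a complex $\Gamma'$ such that $\Gamma'$ is $2$-convex union representable if and only if $\Gamma$ is $2$-representable. The obvious candidate, given Proposition \ref{prop:cone}, would be the cone $v\ast\Gamma$ — but that is \emph{always} convex union representable, so it destroys exactly the information we need. Instead the transformation has to graft $\Gamma$ onto a convex union representation in a way that forces the representing sets of $\Gamma$ to live on a hyperplane (i.e.\ in $\R^1$ inside $\R^2$ would be useless, but inside $\R^2$ inside $\R^3$ one loses a dimension the wrong way). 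So the correct statement to reduce from is most likely one dimension down, or the reduction must be arranged so that a $2$-convex union representation of $\Gamma'$ restricts, via Theorem \ref{thm:halfspace} or Corollary \ref{cor:collapsetoconvexset}, to a $2$-representation (or a $1$-representation fed into a known hard instance) of $\Gamma$.

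\textbf{Key steps.} First, I would fix a precise hard source problem: recognition of $2$-representable complexes is NP-hard (this is the result attributed to Tancer that the proposition's preamble alludes to). Second, I would describe the gadget: take the given instance $\Gamma$, and attach it to a fixed "frame" complex $\Lambda$ that is $2$-convex union representable and whose representation has a face whose intersection region is a convex open set $C$ in which one has complete freedom to place the sets representing the vertices of $\Gamma$ — intuitively, one realizes $\Lambda$ so that $C$ is a little $2$-dimensional window, and one needs a $\Gamma$-representation to "fit" inside $C$. Formally, $\Gamma'$ should be something like $\Lambda$ with a copy of $\Gamma$ joined or glued along a designated face $\sigma\in\Lambda$, and Proposition \ref{prop:building} or Proposition \ref{prop:join} is the tool that proves the "if" direction (a $2$-representation of $\Gamma$ yields a $2$-convex union representation of $\Gamma'$). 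Third, for the "only if" direction: given a $2$-convex union representation $\{U_i\}$ of $\Gamma'$, apply Corollary \ref{cor:collapsetoconvexset} with $C = U_\sigma$ (the intersection region of the frame face $\sigma$); the restricted sets $\{U_i\cap C\}$ have nerve equal to the star of $\sigma$, and the sets indexed by vertices of the glued-in $\Gamma$ then furnish a $2$-representation of $\Gamma$. Fourth, verify the reduction is polynomial-time: $\Gamma'$ has size polynomial in that of $\Gamma$, and the frame $\Lambda$ is of bounded size (or of size linear in $n$).

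\textbf{Main obstacle.} The crux is engineering the frame so that the "only if" direction really recovers a $2$-representation of $\Gamma$ rather than something in a higher or lower dimension, and so that no pathological $2$-convex union representation of $\Gamma'$ can "cheat" by spreading the $\Gamma$-sets outside the intended window. Controlling this requires that the intersection of the frame sets corresponding to $\sigma$ be forced to be genuinely $2$-dimensional and that the $\Gamma$-sets be forced to intersect it in their entirety — which is exactly the kind of rigidity one gets from openness plus convexity of the union, combined with Theorem \ref{thm:halfspace}. I expect verifying this rigidity, and simultaneously checking that the construction does not accidentally make $\Gamma'$ representable for trivial reasons (e.g.\ becoming a cone, cf.\ Corollary \ref{cor:nocommonvertex}), to be the delicate part; the rest is bookkeeping with the tools already developed in the paper.
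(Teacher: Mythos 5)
There is a genuine wrong turn at the very first step: you dismiss the cone $v\ast\Gamma$ on the grounds that it ``is always convex union representable, so it destroys exactly the information we need.'' This conflates \emph{convex union representable} (in some dimension) with \emph{$2$-convex union representable}. Proposition \ref{prop:cone} only guarantees that $v\ast\Gamma$ is $d$-convex union representable where $d$ is a dimension in which $\Gamma$ is representable; the information you need is carried entirely by the dimension parameter. In fact $v\ast\Gamma$ is $d$-convex union representable \emph{if and only if} $\Gamma$ is $d$-representable: for the forward direction, a $d$-convex union representation of $v\ast\Gamma$ is in particular a $d$-representation of $v\ast\Gamma$, and deleting the set representing $v$ leaves a $d$-representation of $\Gamma$; for the converse, take a $d$-representation of $\Gamma$ by convex open sets and adjoin $U_v=\R^d$, so the union is all of $\R^d$. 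With $d=2$ this is exactly the polynomial-time reduction from recognizing $2$-representable complexes (NP-hard by Tancer's result, which you correctly identify as the source problem), and it is the paper's proof. Your ``frame'' construction is really an elaborate version of this same idea --- the cone is the frame $\Lambda=\dc{v}$ with designated face $\sigma=\{v\}$, and $\{U_i\cap U_v\}$ has nerve $\link(v)=\Gamma$ --- but having rejected the simplest instantiation you are left with an unspecified gadget whose crucial ``only if'' direction (ruling out representations that ``cheat'') you explicitly leave unresolved. As written, the proposal therefore does not constitute a proof; the fix is simply to reinstate the cone.
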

\begin{proof} First note that a complex $\Delta$ is  $d$-representable if and only if the cone $v\ast\Delta$ is $d$-convex union representable. Indeed, the proof of Proposition~\ref{prop:cone} implies that if $\Delta$ is  $d$-representable, then $v\ast\Delta$ is $d$-convex union representable. Conversely, if $v\ast\Delta$ is $d$-convex union representable, then it is $d$-representable, and so $\Delta$ is $d$-representable: deleting from a $d$-representation of $v\ast\Delta$ the set that represents $v$ produces a $d$-representation of $\Delta$. The proposition then follows from a result of \cite{Tancer-NPcomplete} asserting that recognizing $2$-representable complexes is NP hard.
\end{proof}

While the above discussion indicates that the problem of characterizing the class of convex union representable complexes is out of reach at the moment, the following problems and questions partly motivated by our results might be more approachable.

\begin{question}
Is every convex union representable complex non-evasive? 
\end{question}

\begin{question}
Is the Alexander dual of a convex union representable complex always convex union representable?
\end{question}

\begin{question}
Is every shellable (or constructible) simplicial ball a convex union representable complex?
\end{question}

 \begin{question} \label{question:topological}
Can one characterize the class of topological spaces that possess convex union representable triangulations? Is it true that every collapsible complex becomes convex union representable after a sufficiently fine subdivision?
\end{question}

Several remarks related to Question \ref{question:topological} are in order. First, note that there exist contractible spaces, e.g., the dunce hat, that admit no collapsible triangulation; in particular, such a space has no triangulation that is convex union representable. Note also that the first barycentric subdivision of a collapsible complex is not always a convex union representable complex. Indeed, if $\Delta$ is a collapsible complex with only one free face (e.g., $\Sigma_d$ from Corollary \ref{cor:collapsiblebutnotrepresentable}) then it follows from Corollary \ref{cor:nocommonvertex} that the barycentric subdivision of $\Delta$ is \emph{not} convex union representable. It might still be the case that for a sufficiently large $n$, the $n$-th barycentric subdivision of a collapsible complex is convex union representable. 

\begin{question} \label{question:d-infty}
For a fixed $d\ge 2$, do there exist $d$-dimensional convex union representable complexes which require arbitrarily high dimension to represent?
\end{question}

\begin{question} \label{question:d-(d+1)}
Does there exist a simplicial complex $\Delta$ which is $d$-representable and $(d+1)$-convex union representable, but not $d$-convex union representable?
\end{question}

\noindent Since every $k$-dimensional simplicial complex is $(2k+1)$-representable \cite[Section 3.1]{tancer}, an affirmative answer to Question \ref{question:d-infty} would imply an affirmative answer to Question \ref{question:d-(d+1)}.

\section*{Acknowledgments} We are grateful to Bruno Benedetti, Florian Frick, Anne Shiu, and Martin Tancer for insightful conversations, comments, and questions on an earlier version of this paper.  Additional thanks go to the referees for helpful suggestions.

\bibliographystyle{plain}
\bibliography{neuralcodereferences}

\end{document}